\documentclass{amsart}
\usepackage[english]{babel}
\usepackage{amsfonts, amsmath, amsthm, amssymb,amscd,indentfirst}

\usepackage{xcolor}
\usepackage{MnSymbol}
\usepackage{esint}
\usepackage{amsmath,amssymb,latexsym,indentfirst}
\usepackage{times}
\usepackage{palatino}

\newtheorem{theorem}{Theorem}[section]

\newtheorem{proposition}{Proposition}[section]

\newtheorem{definition}{Definition}[section]

\newtheorem{corollary}{Corollary}[section]

\newtheorem{example}{Example}[section]
\newtheorem{remark}{Remark}[section]
\newtheorem{assumption}{Assumption}[section]

\begin{document}

\title[Heat conservation]{Heat conservation for generalized Dirac Laplacians on manifolds with boundary}


\author{Levi Lopes de Lima}

\thanks{Universidade Federal do Cear\'a,
	Departamento de   Matem\'atica, Campus do Pici, R. Humberto Monte, s/n, 60455-760,
	Fortaleza/CE, Brazil.}

\email{levi@mat.ufc.br}

\thanks{Partially suported by  CNPq/Brazil grant 311258/2014-0 and FUNCAP/CNPq/PRONEX Grant 00068.01.00/15.}

\maketitle

\begin{abstract}
We consider  a notion of conservation for the heat semigroup associated to a generalized Dirac  Laplacian acting on sections of a vector bundle over a noncompact manifold with a (possibly noncompact) boundary under mixed boundary conditions. Assuming that the geometry of the underlying manifold is controlled in a suitable way and imposing uniform lower bounds  on the zero order (Weitzenb\"ock) piece  of the Dirac Laplacian  and on the endomorphism defining the mixed boundary condition we show that the corresponding conservation principle holds. 
A key ingredient in the proof is a domination property for the heat semigroup which follows from an extension to this setting of a Feynman-Kac formula recently proved in \cite{dL1} in the context of differential forms.
When applied to the Hodge Laplacian acting on differential forms satisfying absolute boundary conditions, this extends previous results by Vesentini \cite{Ve} and Masamune \cite{M} in the boundaryless case. Along the way we also prove a vanishing result for $L^2$ harmonic sections in the broader context of generalized (not necessarily Dirac) Laplacians.
These  results are  further 
illustrated with applications to the Dirac Laplacian acting on spinors and to the Jacobi operator acting on sections of the normal bundle of a free boundary minimal immersion. 
\end{abstract}

\section{Introduction and statement of the main result}\label{intro}

Throughout this note we consider a noncompact, oriented Riemannian manifold $(X,g)$ of dimension $n\geq 2$. We assume that $X$ carries a (possibly noncompact) boundary $\Sigma$, on which an inwardly oriented unit normal vector $\nu$ is globally defined. Also, we assume that $X$ is geodesically complete in the sense that any geodesic  avoiding $\Sigma$ is defined for all time. We denote by $d_X$ the intrinsic distance on $X$, by $\nabla$ the Levi-Civita connection on tensors on $X$ and by $B=-\nabla_\nu$ the shape operator of $\Sigma$.

Let $\mathcal E\to X$ be a Riemannian (or Hermitean) vector bundle endowed with a fiber metric $\langle\,,\rangle$ and a compatible connection, still denoted by  $\nabla$.
Recall that a {\em generalized Laplacian} acting on sections of  $\mathcal E$ is a second order elliptic operator given by
\[
\Delta=\nabla^*\nabla+W,
\]
where $\nabla^*\nabla$ is the Bochner Laplacian associated to $\nabla$ and $W\in \Gamma(X,{\rm End}(\mathcal E))$ is pointwise selfadjoint bundle endomorphism.  
We will refer to $W$ as the {\em Weitzenb\"ock operator}. Also, we denote the standard functional spaces of sections of $\mathcal E$ by $L^p(X,\mathcal E)$, etc.

In the presence of $\Sigma$ we need to attach to $\Delta$ suitable boundary conditions of elliptic type. Here we adopt a certain class of {mixed} boundary conditions which are determined by an orthogonal decomposition
\[
\mathcal E|_{\Sigma}=\mathcal F_+\oplus\mathcal F_-
\]
corresponding to the eigenbundles of a selfadjoint involution $\mathcal I\in\Gamma(X,{\rm End}(\mathcal E|_\Sigma))$ and a pointwise selfadjoint endomorphism $S\in\Gamma(\Sigma,{\rm End}(\mathcal F_+))$; see Section \ref{feyn-kac}. Also, we assume throughout the text that both  
\[
W\in{L^2_{\rm loc}}(X,{\rm End}(\mathcal E)) \quad {\rm and}\quad  S\in{L^2_{\rm loc}}(\Sigma,{\rm End}(\mathcal F_+))
\]
are uniformly bounded from below. These requirements are better stated in terms of the functions 
\begin{equation}\label{defr}
w:X\to \mathbb R, \quad w(x)=\inf_{|\phi|=1}\langle W(x)\phi,\phi\rangle, 
\end{equation}
and 
\begin{equation}\label{defsigma}
\sigma:\Sigma\to\mathbb R, \quad \sigma(x)=
\inf_{|\phi|=1}\langle S(x)\phi,\phi\rangle.
\end{equation}

\begin{assumption}\label{assumpwb}
	There exist constants $c_1,c_2>-\infty$ such that $w\geq c_1$
and $\sigma\geq c_2$. 
	\end{assumption}

Under this assumption and imposing mixed  boundary conditions as above, $\Delta$ admits a natural selfadjoint extension which we denote by $\Delta_{W,S}$. Hence, we may apply the spectral theorem to define the corresponding heat semigroup 
\[
e^{-\frac{1}{2}t\Delta_{W,S}}: {L^2}(M,\mathcal E)\to {L^2}(M,\mathcal E), \qquad t>0.
\]
In this setting, we denote by  $\mathcal D_S(\mathcal E)$ the space of compactly supported, smooth sections and by $\mathcal H(\mathcal E)$ the space of harmonic sections (i.e. sections lying in $\ker \Delta_{W,S}$), where in both cases we assume that the  given mixed boundary conditions are met.
Also, $(\,,)$ will denote the standard $L^2$ pairing between sections of $\mathcal E$.

The definition below is motivated by \cite{Ve,M}, where it is discussed in the context of differential forms on boundaryless manifolds.

\begin{definition}\label{consforms}
	Under the conditions above, we say that the heat conservation principle holds for $\Delta_{W,S}$ if the equality
	\begin{equation}\label{consforms2}
	\left(e^{-\frac{1}{2}t\Delta_{W,S}}\phi,\eta\right)=\left(\phi,\eta\right), \qquad t>0,
	\end{equation}	
	holds
	for any  $\phi\in\mathcal D_S(\mathcal E)$ and any $\eta\in \mathcal H(\mathcal E)\cap {L^\infty}(X,\mathcal E)$. 
\end{definition}

This means that bounded harmonic sections are preserved by the heat semigroup. When $\mathcal E$ is the trivial line bundle, $\Delta=\Delta_0$, the (nonnegative) Laplacian acting on functions, and we impose Neumann boundary conditions, this boils down to requiring that $X$ is stochastically complete (with respect to normally reflected Brownian motion); see Section \ref{conserref} for a discussion of this point. Thus, Definition \ref{consforms} is a straightforward generalization of a much studied property of a natural  diffusion process on manifolds with boundary.

Our main result provides a simple criterium for the validity of this principle. 
For technical reasons we need to control the geometry of the underlying manifold  $(X,g)$ both at infinity and around the boundary. Thus, througout the text we assume that the following holds.

\begin{assumption}\label{assump} The Ricci tensor ${\rm Ric}$ is bounded from below and 
	\begin{itemize}
		\item  Either $\Sigma$ is convex (i.e. $B\geq 0$);
		\item Or
		\begin{enumerate}
			\item $B$ is bounded;
			\item there exists $r_0>0$ such that the geodesic collar map 
			\[
			\Lambda_{r_0}: [0,r_0)\times\Sigma\to X,\quad \Lambda_{r_0}(r,x)=\exp_x(r\nu),
			\]
			is a diffeomorphism onto its image;
			\item the sectional curvature is uniformly bounded from above on the image of $\Lambda_{r_0}$.   
		\end{enumerate}
	\end{itemize}
	\end{assumption}

This kind of assumption appears in \cite[Section 3.2.3]{W}. As proved in \cite[Theorem 3.2.9]{W}, it leads to an integrability result for the exponentiated boundary local time associated to reflected Brownian motion; see Theorem \ref{intloct}.  Another useful consequence of Assumption \ref{assump} is that $X$ is stochastically complete in the sense that the sample paths of the reflected Brownian motion remain in $X$ for any positive time; see Theorem \ref{consesp}. 

We need a further  specialization on the structure of $\Delta$. Recall that a {\em Dirac operator} on $\mathcal E$ is a first order differential operator such that $D^2$ is a generalized Laplacian. 
We then say that $\Delta=D^2$ is a {\em generalized Dirac Laplacian}. 
We note that the existence of $D$ is equivalent to requiring that $\mathcal E$ is a Dirac bundle with respect to which $D$ is the corresponding Dirac operator \cite[Proposition 10.1.5]{Ni}. 
In particular, we have the Leibniz rule
\begin{equation}\label{fundrel}
D(\xi\cdot\phi)=D_{\sf c}\xi\cdot\phi+\xi\cdot D\phi,
\end{equation}
for $\phi\in\Gamma(X,\mathcal E)$, $\xi\in\Gamma(X,{\sf Cl}(TX))$, where ${\sf Cl}(TX)$ is the Clifford bundle of $(X,g)$, the dot is Clifford multiplication and $D_{\sf c}$ is the Dirac operator on ${\sf Cl}(TX)$, viewed as a Dirac bundle over itself under left Clifford multiplication \cite[Chapter II, Example 5.8]{LM}.

\begin{assumption}\label{assdirac}
	There holds $\mathcal H(\mathcal E)\cap L^\infty(X,\mathcal E)\subset {\rm ker}\, D$. In other words, any bounded harmonic section $\phi\in \Gamma(X,\mathcal E)$ meeting the given mixed boundary conditions satisfies $D\phi=0$.
	\end{assumption}

With this terminology at hand we can state our main result. 

\begin{theorem}\label{main}
	If $(X,g)$ satisfies Assumption \ref{assump} and a generalized Dirac Laplacian $\Delta=D^2$ acting on sections of $\mathcal E\to X$ satisfies Assumptions \ref{assumpwb} and \ref{assdirac} then the heat conservation principle holds for $\Delta_{W,S}$.
\end{theorem}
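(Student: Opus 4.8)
The plan is to deduce the conservation identity \eqref{consforms2} from a probabilistic representation of the semigroup, reducing it to the statement that bounded harmonic sections are fixed by the heat flow. First I would set up the Feynman--Kac machinery by extending the formula of \cite{dL1} to the generalized Dirac Laplacian $\Delta=D^2$ under mixed boundary conditions. Let $X_r$ be the reflected Brownian motion on $X$, which does not explode by the stochastic completeness of Theorem \ref{consesp}, let $\ell_r$ be its boundary local time on $\Sigma$, let $//_r$ be stochastic parallel transport along $X_r$, and let $\mathcal M_r$ be the $\mathrm{End}(\mathcal E)$-valued multiplicative functional solving the pathwise linear equation driven by $-\tfrac12 W$ in the interior and by $-S\,d\ell_r$ on $\Sigma$. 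For $\phi\in\mathcal D_S(\mathcal E)$ this yields
\[
\bigl(e^{-\frac{1}{2}t\Delta_{W,S}}\phi\bigr)(x)=\mathbb E_x\bigl[\mathcal M_t\,//_t^{-1}\phi(X_t)\bigr],
\]
together with the domination $|\mathcal M_t|\le e^{-\frac12 c_1 t-c_2\ell_t}$, in which the lower bounds $w\ge c_1$ and $\sigma\ge c_2$ of Assumption \ref{assumpwb} enter decisively.

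Next I would reduce the conservation principle to a fixed-point property of $\eta$. Writing $p_t(x,y)$ for the heat kernel of $\Delta_{W,S}$ and using its fibrewise symmetry $p_t(x,y)^\ast=p_t(y,x)$, the pairing unfolds as
\[
\bigl(e^{-\frac{1}{2}t\Delta_{W,S}}\phi,\eta\bigr)=\bigl(\phi,P_t\eta\bigr),\qquad P_t\eta(y):=\int_X p_t(y,x)\eta(x)\,dx=\mathbb E_y\bigl[\mathcal M_t\,//_t^{-1}\eta(X_t)\bigr].
\]
Because $\eta\in L^\infty(X,\mathcal E)$, the domination bound reduces the finiteness of $P_t\eta$ to the integrability of $e^{-c_2\ell_t}$, which is exactly the content of Theorem \ref{intloct}, while Theorem \ref{consesp} ensures that no mass escapes at infinity. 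It therefore suffices to prove $P_t\eta=\eta$, that is, that the bounded harmonic section $\eta$ coincides with its own Feynman--Kac expectation.

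The crux is to establish $P_t\eta=\eta$ by showing that $r\mapsto\mathcal M_r\,//_r^{-1}\eta(X_r)$ is a genuine martingale on $[0,t]$; evaluating at $r=0$ and $r=t$ then gives the identity. By It\^o's formula the finite-variation part of this process splits into an interior contribution proportional to $\Delta_{W,S}\eta=D^2\eta=0$, which vanishes since $\eta\in\mathcal H(\mathcal E)$, and a boundary contribution carried by $d\ell_r$. The cancellation of the boundary term is precisely where Assumption \ref{assdirac} intervenes: writing $D$ near $\Sigma$ through Clifford multiplication by $\nu$ via the Leibniz rule \eqref{fundrel}, the relation $D\eta=0$ combined with the mixed boundary conditions met by $\eta$ forces the drift along $\Sigma$ to vanish. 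Promoting the ensuing local martingale to a true one, so that $\eta(y)=\mathbb E_y[\mathcal M_0//_0^{-1}\eta(X_0)]=\mathbb E_y[\mathcal M_t//_t^{-1}\eta(X_t)]=P_t\eta(y)$, rests again on $\|\eta\|_\infty<\infty$ and the exponential local-time estimate of Theorem \ref{intloct}.

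I expect this last step to be the main obstacle. On the possibly noncompact boundary $\Sigma$ one must translate the second-order mixed boundary conditions defining $\Delta_{W,S}$ into a first-order condition compatible with the reflecting dynamics of $X_r$, and it is the interplay of $D\eta=0$ (Assumption \ref{assdirac}), the structural endomorphisms $\mathcal I$ and $S$, and the Leibniz rule \eqref{fundrel} that makes this cancellation possible; simultaneously, the true-martingale property up to $\Sigma$ depends entirely on the integrability of the exponentiated boundary local time, which is the very purpose served by the geometric hypotheses gathered in Assumption \ref{assump} via Theorem \ref{intloct}.
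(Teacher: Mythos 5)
Your route---pass to the adjoint $P_t$ via the heat kernel, represent it by the Feynman--Kac formula, and prove $P_t\eta=\eta$ by a martingale argument applied directly to the bounded harmonic section $\eta$---is genuinely different from the paper's proof, and it contains two real problems. The first is that you have misplaced the role of Assumption \ref{assdirac}. In your It\^o computation the boundary drift is $\mathcal M_r(\nabla_\nu-S)\eta\,d\lambda_r$, and since $\mathcal M_r\Pi_-=0$ after the $\epsilon$-limit this term vanishes for \emph{any} section meeting the mixed boundary conditions \eqref{mixedcond2}---exactly as in the proof of Theorem \ref{feyn-kac-form}; neither the Clifford structure nor the relation $D\eta=0$ is actually used anywhere in your sketch. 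If your argument closed, it would establish the conservation principle for \emph{every} generalized Laplacian satisfying Assumptions \ref{assump} and \ref{assumpwb}, a claim the paper deliberately avoids (see the discussion of the Jacobi operator in Subsection \ref{freeb}). In the paper Assumption \ref{assdirac} enters in a completely different place: combined with the Leibniz rule \eqref{fundrel} it yields $\Delta_{W,S}(\zeta_i\eta)=(\Delta_0\zeta_i)\eta$ for the resolvent-smoothed cutoffs $\zeta_i=\int_0^{+\infty}e^{-t}e^{-\frac12 t\Delta_0}h_i\,dt$ of Proposition \ref{auxx}, which reduces everything to the scalar operator $\Delta_0$ and to stochastic completeness, with the probabilistic input confined to the $L^1$-domination of Corollary \ref{domsem} applied to the compactly supported datum $\phi$ in the identity of Proposition \ref{intident}.

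The second problem is the step you yourself flag as the main obstacle: promoting the local martingale $r\mapsto\mathcal M_rJ_r^{-1}\eta(X_r)$ to a true one presupposes that It\^o's formula applies to $\eta$ in the first place, and this is not available. The functional $\mathcal M_t$ is constructed only as a double limit ($\epsilon\to0$, then $W_i=\min\{W,i\,{\rm Id}\}\uparrow W$), and all the It\^o computations in Section \ref{semigroup} are performed on the approximants paired with $L^2$ solutions of the \emph{approximate} heat equations. Your $\eta$ is not compactly supported, need not lie in $L^2$, is only a weak solution when $W\in L^2_{\rm loc}$, and---crucially---is not harmonic for the truncated operators, since $\Delta_{W_i,S_i}\eta=(W_i-W)\eta+\cdots\neq 0$. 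Passing to the limit therefore leaves an uncontrolled drift $\int_0^t\mathcal M_{i,s}(W_i-W)({\sf X}_s)\eta({\sf X}_s)\,ds$ (and its boundary analogue), whose expectation requires path integrability of $\|W({\sf X}_s)\|$ weighted by the approximating functionals, which the stated hypotheses do not provide. Until this is supplied, the identity $P_t\eta=\eta$ is unproved; the paper's detour through $\zeta_i\eta$ exists precisely to avoid applying any stochastic calculus to $\eta$ itself.
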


This paper is organized as follows. In Section \ref{conserref} we review the properties of Brownian motion and  Brownian bridge in the reflected case and in Section \ref{feyn-kac} we discuss mixed boundary conditions. The proof of Theorem \ref{main} is included  in Section \ref{proofmain} and makes use of a Feynman-Kac formula (Theorem \ref{mainfk}), which allows us to obtain a path integral representation for the heat kernel associated to $e^{-\frac{1}{2}t\Delta_{W,S}}$ (Theorem \ref{desintfin}). This is a key step in establishing the corresponding semigroup domination property (Theorem \ref{domsem0} and Corollary \ref{domsem}). We stress that since the proof of this property does not require the use of Assumption \ref{assdirac}, it holds for {\em any} generalized Laplacian $\Delta_{W,S}$ satisfying Assumption \ref{assumpwb}. In particular, we are able to obtain a vanishing result in this rather general setting (Corollary \ref{vanishres}).  
Finally, in Section  \ref{examples} we discuss  applications of our  results to certain generalized Laplacians appearing in Geometry, namely, the Hodge Laplacian acting on differential forms, the Dirac Laplacian acting on spinors and the Jacobi operator acting on sections of the normal bundle of a free boundary minimal submanifold.

Finally, we mention that a preliminary version of this article, with a sketch of the proof of our main result in the context of the  Hodge Laplacian, has been published in \cite{dL3}. 

\section{Preliminary results on  reflected Brownian motion}\label{conserref}

In this section we collect a few technical results on the reflected Brownian motion on the underlying Riemannian manifold $(X,g)$. Besides reviewing the stochastic notions need\-ed in the sequel, this is intended to justify the claim in the Introduction that Definition \ref{consforms} can be viewed as a natural generalization of $X$ being stochastically complete with respect to this diffusion process. We then discuss the associated reflected Brownian bridge, which happens to be a key ingredient in establishing a path integral representation for the heat semigroup $e^{-\frac{1}{2}t\Delta_{W,S}}$. 

Let ${\sf X}_t^x$ be reflected Brownian motion starting at $x\in X$ \cite{AL, IW, Hs2, dL1, W}. This is a continuous stochastic process driven by  $-\frac{1}{2}\Delta_0$, where $\Delta_0$ is the (nonnegative) Laplacian  acting on bounded functions satisfying  Neumann boundary condition along $\Sigma$\footnote{Thus, our sign convention is so that  $\Delta_0=-d^2/dx^2$ on $\mathbb R$.}.   
Recall that ${\sf X}_t=\pi {\widetilde {\sf X}}_t$, where $\pi:P_{\rm SO}(X)\to X$ is the principal bundle of oriented orthonormal frames and ${\widetilde {\sf X}}_t$ is the {\em horizontal} reflected Brownian motion starting at some $\widetilde x\in\pi^{-1}(x)$, whose anti-development is the standard Brownian motion $b_t$ in $\mathbb R^n$.
Formally, ${\widetilde {\sf X}}_t$ satisfies the stochastic differential equation 
\begin{equation}
\label{stocut}
d{\widetilde {\sf X}}_t=\sum_{i=1}^nH_i({\widetilde {\sf X}}_t)\circ db^i_t+\nu^\dagger({\widetilde {\sf X}}_t)d\lambda_t,
\end{equation}
where $\{H_i\}_{i=1}^n$ are the fundamental horizontal vector fields on  $P_{\rm SO}(X)$, the dagger means the standard equivariant lift (scalarization) of tensor fields on $X$ to $P_{{\rm SO}}(X)$ and 
$\lambda_t$ is the boundary local time associated to ${\sf X}_t$. 
We recall that $\lambda_t$ is a nondecreasing process which only increases when the Brownian path hits the boundary.
	
In general, ${\sf X}_t$ might fail to be a Markov process. More precisely, let $\widehat X=X\cup\{\infty\}$ be the one-point compactification of the pair $(X,\Sigma)$ and define 
\[
{\bf e}(x)=\inf\{t\geq 0;{\sf X}^x_t=\infty\}, \quad x\in X.
\]
For obvious reasons, $\bf e$ is called the extinction time of ${\sf X}_t$. 
Now, the Markov property for ${\sf X}_t$ might not hold precisely because the process might be explosive in the sense that $\bf e \not\equiv+\infty$. 

This somewhat annoying explosiveness property can be reformulated in analytical terms as follows. A  version of the Feynman-Kac formula in this setting says that  
the (local) semigroup generated by $-\frac{1}{2}\Delta_0$ is given by 
\begin{equation}\label{defsemi}
(e^{-\frac{1}{2}t\Delta_0}f)(x)=\mathbb E_x[f({\sf X}_t^x)\chi_{\{t<{\bf e}(x)\}}],
\end{equation}
where $\mathbb E_x$ is the expectation associated to the law $\mathbb P_x$ of ${\sf X}_t^x$, $f\in L^2(X)\cap L^\infty(X)$ satisfies Neumann boundary condition  and
$\chi$ is the indicator function. 
It follows that $t\mapsto e^{-\frac{1}{2}t\Delta_0}$ is a positive preserving, contraction semigroup on the space of all such functions, so by interpolation it can be extended as a contraction semigroup to $L^p(X)$, $1\leq p\leq \infty$. Thus, we may  
apply (\ref{defsemi}) with $f={\bf 1}$, the function identically equal to $1$, in order to get 
\begin{equation}\label{f-kfunct}
(e^{-\frac{1}{2}t\Delta_0}{\bf 1})(x)=\mathbb P[t<{\bf e}(x)]. 
\end{equation}
So in general we have $e^{-\frac{1}{2}t\Delta_0}{\bf 1}\leq {\bf 1}$ and being  explosive means precisely that  
$e^{-\frac{1}{2}t\Delta_0}{\bf 1}\not\equiv {\bf 1}$ for some (and hence any) $t>0$. This means that constant functions are {\em not} preserved by the semigroup.

Another way of expressing this sub-Markov property of ${\sf X}_t$ relies on the well-known fact
that the semigroup action can be represented by convolution against a smooth kernel. More precisely, 
\[
(e^{-\frac{1}{2}t\Delta_0}f)(x)=\int_X K_0(t;x,y)f(y)dX_y,
\]   
where $K_0$ is the Neumann heat kernel, that is, the fundamental solution of the initial value problem associated to the heat operator 
\[
L=\frac{\partial }{\partial t}+\frac{1}{2}\Delta_0
\]    
with Neumann boundary condition along $\Sigma$.
Thus, by (\ref{f-kfunct}) in general we have 
\[
\int_X K_0(t;x,y)dX_y\leq 1,
\]
and we see once again that in the explosive case the strict equality holds 
for some $t>0$. Thus, in general we are not allowed to interpret $K_0$ as a transition probability density function for ${\sf X}_t$.

The following well-known proposition summarizes the discussion above.
Here, $(\,,)_0$ is the standard $L^2$ pairing on functions.

\begin{proposition}\label{equiv}
	The following are equivalent:
	\begin{enumerate}
		\item ${\sf X}_t$ is non-explosive in the sense that $\bf e\equiv +\infty$;
			\item For some/any $t>0$ and any $x\in X$, $K_0(t;x,\cdot)$ is a probability density function on $X$. 
		\item For some/any $t>0$, $ e^{-\frac{1}{2}t\Delta_0}{\bf 1}={\bf 1}$;
		\item For some/any $t>0$, $(e^{-\frac{1}{2}t\Delta_0}f,{\bf 1})_0=( f,{\bf 1})_0$, for any compactly supported function  $f$ on $X$ satisfying Neumann boundary condition.
	\end{enumerate} 
	\end{proposition}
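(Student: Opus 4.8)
The plan is to reduce every statement to the single pointwise identity $(e^{-\frac{1}{2}t\Delta_0}{\bf 1})(x)=\mathbb P[t<{\bf e}(x)]$ recorded in (\ref{f-kfunct}), and then dispose of the ``some/any'' dichotomy by a standard semigroup monotonicity argument. I would start with the cheapest link, (2)$\Leftrightarrow$(3): the kernel representation gives $\int_X K_0(t;x,y)\,dX_y=(e^{-\frac{1}{2}t\Delta_0}{\bf 1})(x)$, so the assertion that $K_0(t;x,\cdot)$ integrates to $1$ for every $x$ is verbatim the equality $e^{-\frac{1}{2}t\Delta_0}{\bf 1}={\bf 1}$, and the two conditions coincide for each fixed $t$.

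Next I would prove (1)$\Leftrightarrow$(3). Read pointwise, (\ref{f-kfunct}) shows that $e^{-\frac{1}{2}t\Delta_0}{\bf 1}={\bf 1}$ for a given $t$ is equivalent to $\mathbb P[{\bf e}(x)\leq t]=0$ for all $x$; letting $t\to\infty$ and using monotone convergence then yields $\mathbb P[{\bf e}(x)<\infty]=0$, which is exactly non-explosiveness, while the converse is immediate. To upgrade ``some'' to ``any'' I would use that $t\mapsto(e^{-\frac{1}{2}t\Delta_0}{\bf 1})(x)$ is nonincreasing and dominated by ${\bf 1}$: if equality holds at some $t_0$, write $e^{-\frac{1}{2}t_0\Delta_0}=e^{-\frac{1}{2}(t_0/2)\Delta_0}\circ e^{-\frac{1}{2}(t_0/2)\Delta_0}$ and set $u=e^{-\frac{1}{2}(t_0/2)\Delta_0}{\bf 1}\leq{\bf 1}$. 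Positivity of the semigroup gives $e^{-\frac{1}{2}(t_0/2)\Delta_0}u\leq e^{-\frac{1}{2}(t_0/2)\Delta_0}{\bf 1}=u\leq{\bf 1}$, while the left side equals ${\bf 1}$; hence $u={\bf 1}$, i.e. equality propagates to $t_0/2$, then dyadically and by monotonicity to every $t\leq t_0$, and finally to all $t$ via the semigroup law. This closes the dichotomy uniformly across (1)--(3).

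Finally I would treat (3)$\Leftrightarrow$(4) through the symmetry $K_0(t;x,y)=K_0(t;y,x)$ afforded by the selfadjointness of $\Delta_0$. For compactly supported $f$ meeting the Neumann condition, Fubini (legitimate since $K_0\geq0$, $\int_X K_0(t;x,y)\,dX_y\leq1$, and $f$ has compact support) yields $(e^{-\frac{1}{2}t\Delta_0}f,{\bf 1})_0=(f,e^{-\frac{1}{2}t\Delta_0}{\bf 1})_0$, so that (4) reads $\int_X f\,[\,{\bf 1}-e^{-\frac{1}{2}t\Delta_0}{\bf 1}\,]\,dX=0$. Since the bracket is nonnegative by (\ref{f-kfunct}), testing against nonnegative localized $f$ forces $e^{-\frac{1}{2}t\Delta_0}{\bf 1}={\bf 1}$ almost everywhere, giving (3), and the reverse implication is trivial. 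I expect the one genuinely delicate point to be the duality $(e^{-\frac{1}{2}t\Delta_0}f,{\bf 1})_0=(f,e^{-\frac{1}{2}t\Delta_0}{\bf 1})_0$: because ${\bf 1}\notin L^2(X)$ when $X$ has infinite volume, selfadjointness cannot be invoked directly, which is precisely why the argument must route through the pointwise kernel symmetry and the compact support of $f$, with care that $f$ lies in the domain via the Neumann condition.
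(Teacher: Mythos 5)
Your proof is correct, and it is essentially a careful write-up of the route the paper itself takes: the paper offers no formal proof (it presents the proposition as ``summarizing the discussion above''), and that discussion consists precisely of reducing everything to the identity (\ref{f-kfunct}) and the kernel representation of the semigroup, which is your strategy. The two points you add beyond the paper's sketch --- the positivity/semigroup argument upgrading ``some $t$'' to ``any $t$'', and the kernel-symmetry duality $(e^{-\frac{1}{2}t\Delta_0}f,{\bf 1})_0=(f,e^{-\frac{1}{2}t\Delta_0}{\bf 1})_0$ needed for item (4) since ${\bf 1}\notin L^2(X)$ --- are exactly the details that need filling in, and you handle both correctly.
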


We now recall a standard terminology.  

\begin{definition}\label{conserv0}
	If any of the conditions in Proposition \ref{equiv} happens then we say  that $X$ is stochastically complete.
\end{definition}

The validity of this property means that the desired probabilistic interpretation for $K_0$ has been restored  so that  ${\sf X}_t$ is turned into a genuine Markov process. Equivalently, constant functions are preserved by the associated semigroup.
Also, in view of item (4) we see that $X$ being stochastically complete is equivalent to the heat conservation principle holding for $\Delta_0$. This provides the link between this classical notion and our Definition \ref{consforms}.

It is not hard to exhibit examples of noncompact, geodesically complete manifolds which fail to be stochastically complete; see \cite{Gr} for a rather complete survey in the boundaryless case. On the other hand, a  celebrated criterium due to Gregor'yan  
\cite[Theorem 9.1]{Gr}, which certainly can be adapted to our setting,  provides a sufficient condition for stochastic completeness in terms of volume growth.
However, from our viewpoint it is natural to consider instead the following test which involves imposing curvature bounds both in the interior and along the boundary. In the boundaryless case, where only the lower bound on the Ricci tensor is required, this is due to Yau \cite{Y}.

\begin{theorem}\label{consesp}
	If Assumption \ref{assump}  is satisfied then  $X$ is stochastically complete.
	 \end{theorem}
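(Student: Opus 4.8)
The plan is to use Proposition \ref{equiv}, which reduces stochastic completeness to the non-explosion of the reflected Brownian motion ${\sf X}_t$, i.e.\ to the identity $e^{-\frac12 t\Delta_0}\mathbf{1}=\mathbf{1}$. I would establish this through a Khas'minskii-type test adapted to the reflecting boundary: it suffices to produce a proper function $\gamma\in C^2(X)$, $\gamma\geq 0$ with $\gamma(x)\to\infty$ as $x\to\infty$ in $\widehat X$, a constant $C>0$ such that $\Delta_0\gamma\geq -C(1+\gamma)$ on $X$, together with the boundary sign condition $\partial_\nu\gamma\leq 0$ along $\Sigma$. Indeed, applying It\^o's formula to $\gamma({\sf X}_t)$ — using (\ref{stocut}), so that the reflection contributes the term $\langle\nabla\gamma,\nu\rangle\,d\lambda_t=(\partial_\nu\gamma)\,d\lambda_t\leq 0$ — one obtains $d\gamma({\sf X}_t)\leq dM_t+\tfrac12 C(1+\gamma({\sf X}_t))\,dt$ for a local martingale $M_t$; a Gronwall argument along the exit times from an exhaustion of $X$ then shows that $\gamma({\sf X}_t)$ cannot reach $\infty$ in finite time, whence $\mathbf{e}\equiv+\infty$.

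Next I would construct $\gamma$ from the distance functions. Fix $o\in X$ and set $\rho=d_X(\cdot,o)$. The lower Ricci bound ${\rm Ric}\geq-(n-1)\kappa\,g$ feeds into the Laplacian comparison theorem to give, in the barrier sense — the cut-locus singularities being handled by Calabi's trick, where they contribute with the favorable sign — the estimate $\Delta_0\rho\geq-(n-1)\sqrt\kappa\coth(\sqrt\kappa\,\rho)\geq-C(1+\rho)$ away from $o$. Taking $\gamma$ to be a smoothing of $\rho$ near $o$ then yields the interior inequality $\Delta_0\gamma\geq-C(1+\gamma)$. This is precisely the content of Yau's argument in the boundaryless case.

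The main obstacle is the boundary, where two difficulties compound: the distance function $\rho$ need not satisfy $\partial_\nu\rho\leq 0$, and the local-time drift $(\partial_\nu\gamma)\,d\lambda_t$ must be controlled. In the convex case $B\geq 0$ I would exploit the fact that convexity forces the reflecting term to have the favorable sign; equivalently, one may pass to the metric double of $X$ across $\Sigma$, on which reflected Brownian motion on $X$ lifts to ordinary Brownian motion and on which the lower Ricci bound survives (the distributional curvature concentrated along $\Sigma$ being nonnegative precisely because $B\geq 0$), thereby reducing to the boundaryless Yau theorem. In the non-convex case I would instead modify $\gamma$ inside the collar $\Lambda_{r_0}([0,r_0)\times\Sigma)$: using the collar diffeomorphism, the bound on $B$, and the two-sided sectional curvature bound there, one can add to $\rho$ a uniformly bounded correction term, built from the distance-to-boundary function $r=d_X(\cdot,\Sigma)$, that enforces $\partial_\nu\gamma\leq 0$ while preserving both properness and the comparison estimate $\Delta_0\gamma\geq-C(1+\gamma)$; the curvature bounds guarantee that $r$ is smooth with controlled Hessian on the collar. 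Any residual boundary contribution is then absorbed by invoking the integrability of the exponentiated boundary local time from Theorem \ref{intloct}: since $\mathbb E_x[e^{\alpha\lambda_t}]<\infty$ under Assumption \ref{assump}, the local-time term in the stochastic Gronwall estimate has finite expectation and cannot force explosion. I expect this interplay — simultaneously arranging properness, the Neumann sign, and the Laplacian bound near a possibly noncompact, non-convex boundary, and then closing the Gronwall estimate using the exponential integrability of $\lambda_t$ — to be the delicate heart of the proof.
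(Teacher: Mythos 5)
Your route is genuinely different from the paper's. The paper does not run a Khas'minskii--Yau test at all: its proof (given in Remark \ref{function}) is analytic. One takes cutoffs $h_i$ with $0\leq h_i\nearrow{\bf 1}$, $\partial h_i/\partial\nu=0$ and, by geodesic completeness, $\|dh_i\|_{L^\infty}\to 0$; then the integral identity of Proposition \ref{intident} together with the commutation $d\,e^{-\frac12\tau\Delta_0}f=e^{-\frac12\tau\Delta_{R_1,B}}df$ gives
\[
\left(e^{-\frac12 t\Delta_0}f-f,h_i\right)_0=-\frac12\int_0^t\int_X\langle e^{-\frac12\tau\Delta_{R_1,B}}df,dh_i\rangle\,dX\,d\tau,
\]
and the right-hand side is killed as $i\to\infty$ by the $L^1$ semigroup domination for $1$-forms (Corollary \ref{domsem}), whose proof only needs the pre-explosion Feynman--Kac representation. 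This yields item (4) of Proposition \ref{equiv} directly. So the curvature hypotheses of Assumption \ref{assump} enter only through the multiplicative functional bounds (Proposition \ref{estimgeo}, Theorem \ref{intloct}), never through a comparison theorem for the distance function. Your approach, if completed, would be more self-contained probabilistically, but it is exactly the part the paper's argument is designed to avoid.

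The concrete gap in your proposal is the interior behavior of $\rho=d_X(\cdot,o)$ when $\Sigma$ is not convex. You localize the boundary difficulty to the collar $\Lambda_{r_0}([0,r_0)\times\Sigma)$, but the obstruction is global: for a non-convex boundary, minimizing paths between two \emph{interior} points may touch $\Sigma$, and then $\rho$ is not realized by geodesics of $X$, so the Laplacian comparison estimate $\Delta_0\rho\geq-(n-1)\sqrt{\kappa}\coth(\sqrt{\kappa}\rho)$ (even in the barrier sense, with Calabi's trick) is simply not available from the lower Ricci bound alone; no bounded correction supported in the collar can repair this. The standard fix --- and the one the paper implicitly relies on via \cite[Section 3.2.3]{W} and the proof of Proposition \ref{extprop} --- is to first perform a bounded conformal change of the metric making $\Sigma$ convex (this is where items (1)--(3) of the non-convex alternative in Assumption \ref{assump} are used), and only then argue on the deformed metric; your sketch omits this step. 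Separately, your reduction of the convex case to Yau's theorem via the metric double needs justification: the doubled metric is only Lipschitz across $\Sigma$, so you must either smooth it while keeping a uniform lower Ricci bound (which again uses $B\geq 0$ quantitatively) or invoke a version of Yau's theorem for non-smooth metrics; as stated this is an appeal to a result you have not supplied.
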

 
\begin{proof}
 	See Remark \ref{function} for a simple proof based on the semigroup domination property proved in Section \ref{semigroup}. 
\end{proof}

As already mentioned, Assumption \ref{assump} also yields  an integrability result for the boundary local time $\lambda_t$. Clearly, we may assume that the lower bound for $B$, say $\underline\kappa$, is negative. 

\begin{theorem}\cite[Theorem 3.2.9]{W}\label{intloct}
	If Assumption \ref{assump} holds then for any $p\in[1,+\infty)$ there exist $K_1^{(p)},K_2^{(p)}> 0$ such that   
	\[
	\mathbb E_x[e^{-p\underline \kappa\lambda_t}]\leq K_1^{(p)}{e^{K_2^{(p)}t}}, 
	\]
	for all $t\geq 0$ and $x\in X$. 
	\end{theorem}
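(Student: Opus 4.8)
The plan is to produce the estimate from an exponential supermartingale built out of a bounded comparison function adapted to the boundary. First note the bound is trivial unless $\underline\kappa<0$: if $\underline\kappa\geq 0$ (in particular in the convex alternative of Assumption \ref{assump}) then $-p\underline\kappa\leq 0$, and since $\lambda_t\geq 0$ is nondecreasing we have $e^{-p\underline\kappa\lambda_t}\leq 1$, so the claim holds with $K_1^{(p)}=1$ and any $K_2^{(p)}>0$. Assume therefore that the second alternative of Assumption \ref{assump} is in force and write $\alpha=-p\underline\kappa>0$.

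The core construction is a function $f$ on $X$ with $f\geq 0$ bounded, $\partial_\nu f=\alpha$ on $\Sigma$, $|\nabla f|$ bounded, and satisfying the pointwise bound $\Delta_0 f+|\nabla f|^2\leq K_2$ for some constant $K_2=K_2(p)$. Let $\rho=d_X(\cdot,\Sigma)$; on the collar $\Lambda_{r_0}([0,r_0)\times\Sigma)$ the function $\rho$ is smooth with $|\nabla\rho|\equiv 1$ and $\nabla\rho=\nu$ along $\Sigma$, so $\partial_\nu\rho=1$ there. Fix a smooth profile $\phi:[0,\infty)\to[0,\infty)$ with $\phi(0)=0$, $\phi'(0)=\alpha$, $\phi'\geq 0$, and $\phi$ constant on $[r_0/2,\infty)$, and set $f=\phi\circ\rho$ on the collar, extended by the constant $\phi(r_0/2)$ on the rest of $X$; this is globally smooth and bounded with $F_{\max}:=\sup f=\phi(r_0/2)$. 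Then $\partial_\nu f=\phi'(0)=\alpha$ on $\Sigma$, $|\nabla f|^2=(\phi')^2$, and $\Delta_0 f=-\phi''+\phi'\,\Delta_0\rho$, so the sought bound reduces to controlling $\Delta_0\rho$ from above on the collar. This is the geometric heart of the argument: by Riccati/Laplacian comparison, the upper sectional-curvature bound on the image of $\Lambda_{r_0}$, the boundedness of the shape operator $B=-\nabla_\nu$ (the initial data for the shape operators of the level sets $\{\rho=r\}$), and the absence of focal points in the collar (guaranteed by $\Lambda_{r_0}$ being a diffeomorphism) keep the mean curvature of these level sets, hence $\Delta_0\rho$, uniformly bounded for $r\in[0,r_0)$; the lower Ricci bound secures the complementary estimate. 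Outside the collar $f$ is constant and the bound is automatic.

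With $f$ in hand I would run the stochastic estimate. The It\^o formula for the reflected motion ${\sf X}_t$, whose generator is $-\frac12\Delta_0$ and which contributes the boundary term $\int_0^t\partial_\nu f({\sf X}_s)\,d\lambda_s$, gives
\[
\alpha\lambda_t-f({\sf X}_t)=-f(x)+M_t+\int_0^t\tfrac12\Delta_0 f({\sf X}_s)\,ds+\int_0^t(\alpha-\partial_\nu f)({\sf X}_s)\,d\lambda_s,
\]
where $M_t$ is a local martingale with $d\langle M\rangle_t=|\nabla f({\sf X}_t)|^2\,dt$. Since $\partial_\nu f=\alpha$ on $\Sigma$ the local-time integral vanishes, and factoring out the Dol\'eans--Dade exponential $\mathcal E(M)_t=\exp(M_t-\tfrac12\langle M\rangle_t)$ yields
\[
e^{\alpha\lambda_t-f({\sf X}_t)}=e^{-f(x)}\,\mathcal E(M)_t\,\exp\!\Big(\int_0^t\tfrac12\big(\Delta_0 f+|\nabla f|^2\big)({\sf X}_s)\,ds\Big)\leq e^{-f(x)}\,\mathcal E(M)_t\,e^{K_2 t/2}.
\]
As $|\nabla f|$ is bounded, $\mathcal E(M)$ is a true martingale, so taking expectations gives $\mathbb E_x[e^{\alpha\lambda_t-f({\sf X}_t)}]\leq e^{-f(x)}e^{K_2 t/2}$. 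Using $0\leq f\leq F_{\max}$ to bound $e^{-f({\sf X}_t)}\geq e^{-F_{\max}}$ and $e^{-f(x)}\leq 1$ then gives $\mathbb E_x[e^{\alpha\lambda_t}]\leq e^{F_{\max}}e^{K_2 t/2}$, which is the assertion with $K_1^{(p)}=e^{F_{\max}}$ and $K_2^{(p)}=K_2/2$.

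I expect the main obstacle to be the comparison estimate for $\Delta_0\rho$ on the collar: keeping the level-set mean curvature uniformly bounded as $r\uparrow r_0$ is exactly where the two-sided curvature control and the no-focal-point property enter, and getting a constant independent of $x$ and of $r$ is the delicate point. A secondary technical issue is running the It\^o and martingale computations without presupposing non-explosion: I would localize along an exhaustion of $X$ by relatively compact sets, derive the inequality up to the corresponding exit times, and pass to the limit by Fatou's lemma, so that the bound is obtained independently of (and, via the semigroup domination argument, compatibly with) Theorem \ref{consesp}.
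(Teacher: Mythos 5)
Your proposal is correct and follows essentially the argument behind the paper's own proof, which consists of invoking \cite[Theorem 3.2.9]{W}: Wang's proof is exactly your scheme --- a bounded comparison function $f=\phi\circ\rho$ built on the geodesic collar with $\partial_\nu f$ prescribed along $\Sigma$ and with $\Delta_0 f$, $|\nabla f|$ uniformly controlled via Riccati/Laplacian comparison from the curvature and shape-operator bounds of Assumption \ref{assump}, fed into an exponential (Dol\'eans--Dade) supermartingale estimate for the boundary local time. Your reduction of the convex alternative to the trivial case $\underline\kappa=0$ (which is how the hypothesis is used in the cited source, the normalization $\underline\kappa<0$ being relevant only in the non-convex branch where the collar hypotheses are available) and your localization to avoid presupposing non-explosion likewise match the intended argument.
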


We now turn to the so-called {\em reflected Brownian bridge} associated to ${\sf X}_t$; see \cite[Appendix A]{dL2} for details. For each $t>0$ and $x,y\in X$, this is the process $\mathfrak X_{s;x,y}$, $0\leq s\leq t$, which starts at $x$, follows the reflected Brownian motion ${\sf X}_s^x$ and is further conditioned to hit $y$ in time $t$. At least for $0\leq s<t$, it is immediate to check that its law $\mathbb P_{t;x,y}$ satisfies 
\begin{equation}\label{lawrel}
\frac{d\mathbb P_{t;x,y}}{d\mathbb P_x}|_{\mathcal G_s}=\frac{K_0(t-s;{\sf X}_s^x,y)}{K_0(t;x,y)},
\end{equation} 
where $\mathcal G_s$ is the standard filtration associated to ${\sf X}_t$.
It then follows that the reflected Brownian bridge is just reflected Brownian motion with an added drift involving the logarithmic derivative of $K_0$. In particular, $\mathfrak X_{s;x,y}$ is a $\mathbb P_{t;x,y}$-semimartingale in the range $0\leq s<t$. It is crucial in applications to be able to extend  this property to $s=t$.  

\begin{proposition}\label{extprop}
If Assumption \ref{assump} holds then reflected Brownian bridge $\mathfrak X_{s;x,y}$ is a $\mathbb P_{t;x,y}$-semimartingale in the whole interval $[0,t]$. 
	\end{proposition}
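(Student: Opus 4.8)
The plan is to start from the representation of the bridge on $[0,t)$ furnished by (\ref{lawrel}) and Girsanov's theorem, and then to show that its finite-variation part has finite total variation up to the terminal time $s=t$. Since the martingale part of $\mathfrak X_s$ has quadratic variation growing at most linearly in $s$ (its diffusion coefficient being the orthonormal frame), once the finite-variation part is shown to converge and $\mathfrak X_s\to y$ as $s\uparrow t$, the local martingale part converges as well and the semimartingale property upgrades from $[0,t)$ to $[0,t]$.

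First I would record the decomposition on $[0,t)$. By (\ref{lawrel}) the density $d\mathbb P_{t;x,y}/d\mathbb P_x|_{\mathcal G_s}$ is the Doob $h$-transform attached to the space-time harmonic function $(s,z)\mapsto K_0(t-s;z,y)$. Applying Girsanov's theorem to the horizontal SDE (\ref{stocut}), under $\mathbb P_{t;x,y}$ the horizontal lift solves, for $0\le s<t$,
\[
d\widetilde{\mathfrak X}_s=\sum_{i=1}^n H_i(\widetilde{\mathfrak X}_s)\circ d\beta^i_s+(\nabla_1\log K_0)^\dagger(t-s;\mathfrak X_s,y)\,ds+\nu^\dagger(\widetilde{\mathfrak X}_s)\,d\lambda_s,
\]
where $\beta_s$ is a $\mathbb P_{t;x,y}$-Brownian motion and $\nabla_1$ is the gradient in the first spatial slot. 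Hence on every $[0,s]$ with $s<t$ the bridge is a semimartingale whose finite-variation part is the drift integral plus the reflection term $\int_0^\cdot\nu^\dagger\,d\lambda$. It therefore suffices to establish the two a.s. finiteness statements
\[
\int_0^t|\nabla_1\log K_0(t-s;\mathfrak X_s,y)|\,ds<\infty,\qquad \lambda_t<\infty,
\]
under $\mathbb P_{t;x,y}$.

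For the drift I would estimate its expectation. Using the change of measure on $\mathcal G_s$ together with the identity $|\nabla\log K_0|\,K_0=|\nabla K_0|$, for each $s<t$ one has
\[
\mathbb E_{t;x,y}\bigl[|\nabla_1\log K_0(t-s;\mathfrak X_s,y)|\bigr]=\frac{1}{K_0(t;x,y)}\int_X K_0(s;x,z)\,|\nabla_1 K_0(t-s;z,y)|\,dX_z.
\]
The crucial input is a pair of estimates for the \emph{Neumann} heat kernel available under Assumption \ref{assump}: the lower Ricci bound together with either boundary convexity or the collar diffeomorphism and sectional bound yields Li–Yau two-sided Gaussian bounds and a gradient estimate $|\nabla_1 K_0(r;z,y)|\le C r^{-1/2}K_0(\kappa r;z,y)$ for some $\kappa\ge 1$. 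Substituting this and using the semigroup property,
\[
\int_X K_0(s;x,z)\,K_0(\kappa(t-s);z,y)\,dX_z=K_0\bigl(s+\kappa(t-s);x,y\bigr)\le C\,K_0(t;x,y),
\]
the last inequality holding uniformly for $s\in[0,t]$ by the Gaussian bounds (since $s+\kappa(t-s)\in[t,\kappa t]$), I reduce the $s$-integral to a constant multiple of $\int_0^t(t-s)^{-1/2}\,ds<\infty$. Fubini then gives integrability of the drift in expectation, hence a.s. finiteness. Finiteness of $\lambda_t$ under $\mathbb P_{t;x,y}$ follows by transferring the exponential-moment bound of Theorem \ref{intloct}: for fixed $\varepsilon\in(0,t)$ the density on $\mathcal G_{t-\varepsilon}$ is bounded above (again by the Gaussian bounds for $K_0(\varepsilon;\cdot,y)$), so $\lambda_{t-\varepsilon}<\infty$ a.s. under the bridge, while the increment $\lambda_t-\lambda_{t-\varepsilon}$ is controlled by the same $h$-transform computation used for the drift.

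The main obstacle is the short-time singularity of the logarithmic gradient as $s\uparrow t$: the drift blows up like $(t-s)^{-1/2}$ near the terminal time, and keeping its integral finite is exactly what forces the use of sharp heat kernel estimates. This is where Assumption \ref{assump} enters decisively, since the Li–Yau and gradient bounds for the Neumann heat kernel require both the interior Ricci lower bound and the geometric control of $\Sigma$. Absorbing the polynomial factor $1+d_X(z,y)/\sqrt r$ coming from the gradient estimate into the slightly larger heat time $\kappa r$ is the technical device that keeps the spatial integral comparable to $K_0(t;x,y)$ and thereby tames the singularity; the boundary reflection term adds a second layer of difficulty, resolved through the local-time integrability of Theorem \ref{intloct}.
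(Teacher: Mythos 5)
Your overall strategy---decompose the bridge via the Doob $h$-transform attached to $(s,z)\mapsto K_0(t-s;z,y)$, then tame the $(t-s)^{-1/2}$ singularity of the drift with heat-kernel gradient estimates and handle the reflection term through Theorem \ref{intloct}---is exactly the route the paper takes. The gap is in how you close the estimate. You reduce everything to the \emph{global} expectation
\[
\mathbb E_{t;x,y}\bigl[|\nabla_1\log K_0(t-s;\mathfrak X_s,y)|\bigr]=\frac{1}{K_0(t;x,y)}\int_X K_0(s;x,z)\,|\nabla_1 K_0(t-s;z,y)|\,dX_z,
\]
and to control the integral over all of $X$ you invoke two-sided Gaussian bounds and a gradient bound $|\nabla_1 K_0(r;z,y)|\le Cr^{-1/2}K_0(\kappa r;z,y)$ holding \emph{uniformly in $z$ over the whole manifold}. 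Assumption \ref{assump} does not deliver this: a Ricci lower bound plus boundary control gives no global volume doubling, no volume noncollapsing, and no uniform lower Gaussian bound of the form $D_1t^{-n/2}e^{-D_2d_X^2/t}$ (already false on hyperbolic space). The constants in the Li--Yau and gradient estimates you need degenerate as $z$ runs off to infinity unless the geometry is uniformly controlled there, which is not assumed. So the spatial integral over $X$ is not justified as written.

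The paper avoids this by proving only a \emph{localized} integrability statement,
\[
\mathbb E_{t;x,y}\left[\int_0^t\bigl|\nabla \log K_0(t-s;\mathfrak X_{s;x,y},y)\bigr|\,\bigl|\nabla f(\mathfrak X_{s;x,y})\bigr|\,ds\right]<+\infty,
\]
for compactly supported $f$ (Neumann if ${\rm supp}\,f$ meets $\Sigma$), following G\"uneysu. The factor $|\nabla f|$ confines every heat-kernel estimate to a fixed compact set, where the two-sided bounds and the gradient bound $|\nabla\log K_0|\le D_5(t^{-1/2}+t^{-1}d_X)$ do hold with constants depending only on the local geometry; and since the semimartingale property is local, testing against such $f$ suffices. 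To repair your argument you would either need to insert this localization (restrict the spatial integral to a compact neighbourhood of ${\rm supp}\,f$) or add a global bounded-geometry hypothesis that the paper deliberately does not make. Your treatment of $\lambda_t$ near the terminal time is also only gestured at (``controlled by the same $h$-transform computation''); it needs the same localized treatment, since the density $K_0(t-s;\cdot,y)/K_0(t;x,y)$ is not uniformly bounded as $s\uparrow t$.
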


\begin{proof}
	We only sketch the proof, as it follows by adapting standard results in the available literature for the boundaryless case. First, as explained in \cite[Section 3.2.3]{W}, Assumption \ref{assump} implies that, by eventually passing to a conformally deformed metric, we may  assume that $\Sigma$ is convex. This guarantees that any two points in $X$ can be joined by at least one minimizing geodesic. By using standard comparison theory, this implies that, at least locally, we have at our disposal the usual package of geometric bounds, which includes the Bishop-Gromov inequality, the doubling volume property and Gaussian bounds for $K_0$, where the controlling constants entering in theses estimates depend only on the local geometry; see \cite[Appendix A]{Gu1}. We then argue as in \cite[Appendix B]{Gu1} to obtain a localized gradient estimate for $\log K_0$, which adapts an argument in \cite{AT}. With these informations at hand, we can easily establish local estimates of the types
	\[
	D_1t^{-n/2}e^{-D_2\frac{d_X(x,y)^2}{t}}\leq K_0(t;x,y)\leq D_3t^{-n/2}e^{-D_4\frac{d_X(x,y)^2}{t}},
	\]       
and 
\[
|\nabla \log K_0(t;x,y)|\leq D_5\left(t^{-1/2}+t^{-1}d_X(x,y)\right),
\]
where the constants $D_j$ only depend on the local geometry of $X$; cf. \cite[Proposition {2.8}]{Gu1}. From this point we may proceed as in the proof of \cite[Theorem 2.7]{Gu1} to check that the following localized inequality holds:
\[
\mathbb E_{t;x,y}\left[\int_0^t\left|\nabla \log K_0(t-s;\mathfrak X_{s;x,y},y)\right|\left|\nabla f(\mathfrak X_{s;x,y})\right|ds\right ]<+\infty, 
\]  
where
$\mathbb E_{t;x,y}$ is the expectation associated to  $\mathbb P_{t;x,y}$ and 
the compactly supported function $f$ is supposed to satisfy Neumann boundary conditions in case ${\rm supp}\,f\cap\Sigma\neq\emptyset$. As explained in \cite{Gu1}, this suffices to complete the proof. 
\end{proof}



\section{Mixed boundary conditions for generalized Laplacians}\label{feyn-kac}

Rather complete studies of elliptic boundary conditions for generalized Laplacians, including the delicate issue of the existence and explicit computation of the corresponding heat kernel asymptotics, can be found in  the available literature; see \cite{AE, Gi, Gru} for instance. Here we single out a class of such boundary conditions  which suffices for the applications we have in mind.

We start with a pointwise selfadjoint involution $\mathcal I\in\Gamma(X,{\rm End}(\mathcal E|_\Sigma))$, which we extend to a collared neighborhood of $\Sigma$ so that $\nabla_\nu\mathcal I=0$. Let 
\[
\Pi_{\pm}=\frac{1}{2}\left(I\pm\mathcal I\right)
\]
be the corresponding projections onto the eigenbundles $\mathcal F_\pm=\Pi_\pm\mathcal E|_\Sigma$ of $\mathcal I$. Clearly,
\[
\nabla_\nu\Pi_\pm=\Pi_\pm\nabla_\nu.
\]
Now take a pointwise selfadjoint endomorphism $S\in\Gamma(\Sigma,{\rm End}(\mathcal F_+))$ and extend it to $\mathcal E|_\Sigma$ by declaring that $S=0$ on $\mathcal F_-$. We may assume that the extension of $S$ to the collared neighborhood, still denoted $S$, satisfies $\nabla_\nu S=0$. It then follows that 
\[
S\Pi_\pm=\Pi_\pm S. 
\]

\begin{definition}\label{mixedcond}
	A section $\phi\in\Gamma(\mathcal E)$ satisfies { mixed boundary conditions} if its restriction to $\Sigma$, still denoted $\phi$, satisfies
	\begin{equation}\label{mixedcond2}
	\Pi_+(\nabla_\nu-S)\phi=0,\quad \Pi_-\phi=0.
	\end{equation}
	\end{definition}

The qualification ``mixed'' of course is due to the fact that this kind of boundary condition is Dirichlet in the $\mathcal F_-$-direction and Robin in the $\mathcal F_+$-direction. This seems to be the largest class of local elliptic boundary conditions to which the stochastic methods in Section \ref{semigroup} apply; see Remark \ref{bdconditions} below. The relevance of mixed boundary conditions in Quantum Field Theory is explained in \cite{AE,Va}. 

For the next proposition, recall that $\mathcal D_S(\mathcal E)$ is the space of smooth, compactly supported sections satisfying (\ref{mixedcond2}). 

\begin{proposition}\label{propmixed} 
If a generalized Laplacian $\Delta$ satisfies Assumption \ref{assumpwb} with $S$ as in (\ref{mixedcond2}) then the bilinear form
\[
Q:\mathcal D_S(\mathcal E)\times \mathcal D_S(\mathcal E)\to\mathbb R,\quad Q(\phi,\eta)=\int_X\langle\Delta \phi,\eta\rangle dX,
\]
is symmetric and bounded from below. 
\end{proposition}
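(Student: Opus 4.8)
The plan is to rewrite $Q$ in a manifestly symmetric form by means of Green's first identity for the Bochner Laplacian, and then to read off both claims from that expression. First I would integrate by parts the term $\int_X\langle\nabla^*\nabla\phi,\eta\rangle\,dX$. Since $\phi,\eta\in\mathcal D_S(\mathcal E)$ are smooth and compactly supported and $\nu$ is the \emph{inward} unit normal, the standard first Green identity gives
\[
\int_X\langle\nabla^*\nabla\phi,\eta\rangle\,dX=\int_X\langle\nabla\phi,\nabla\eta\rangle\,dX+\int_\Sigma\langle\nabla_\nu\phi,\eta\rangle\,d\Sigma,
\]
the boundary term carrying a plus sign precisely because the outward normal is $-\nu$. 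Adding the zeroth order contribution $\int_X\langle W\phi,\eta\rangle\,dX$ then expresses $Q(\phi,\eta)$ as the sum of an interior Dirichlet-type term, a Weitzenb\"ock term, and the boundary term $\int_\Sigma\langle\nabla_\nu\phi,\eta\rangle\,d\Sigma$.

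Next I would simplify the boundary term using Definition \ref{mixedcond}. On $\Sigma$ both sections satisfy $\Pi_-\phi=\Pi_-\eta=0$, so $\eta=\Pi_+\eta$ there, and selfadjointness of $\Pi_+$ gives $\langle\nabla_\nu\phi,\eta\rangle=\langle\Pi_+\nabla_\nu\phi,\eta\rangle$. The Robin part of the condition, $\Pi_+(\nabla_\nu-S)\phi=0$, together with $S\Pi_+=\Pi_+S$ and $\phi=\Pi_+\phi$, yields $\Pi_+\nabla_\nu\phi=S\phi$, so the boundary term becomes $\int_\Sigma\langle S\phi,\eta\rangle\,d\Sigma$. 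This produces the identity
\[
Q(\phi,\eta)=\int_X\langle\nabla\phi,\nabla\eta\rangle\,dX+\int_X\langle W\phi,\eta\rangle\,dX+\int_\Sigma\langle S\phi,\eta\rangle\,d\Sigma,
\]
from which symmetry is immediate: the gradient term is symmetric by inspection, while the other two are symmetric because $W$ and $S$ are pointwise selfadjoint.

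For semiboundedness I would set $\eta=\phi$ and invoke Assumption \ref{assumpwb} in the pointwise form $\langle W\phi,\phi\rangle\geq c_1|\phi|^2$ and $\langle S\phi,\phi\rangle\geq c_2|\phi|^2$ (the latter valid since $\phi=\Pi_+\phi\in\mathcal F_+$ on $\Sigma$), obtaining
\[
Q(\phi,\phi)\geq\int_X|\nabla\phi|^2\,dX+c_1\int_X|\phi|^2\,dX+c_2\int_\Sigma|\phi|^2\,d\Sigma.
\]
If $c_2\geq0$ this already gives $Q(\phi,\phi)\geq c_1\|\phi\|_{L^2}^2$ and we are done. The substantive case is $c_2<0$, where the negative boundary contribution must be absorbed by the interior terms. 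Here I would establish a trace inequality $\int_\Sigma|\phi|^2\,d\Sigma\leq\varepsilon\int_X|\nabla\phi|^2\,dX+C_\varepsilon\int_X|\phi|^2\,dX$ for every $\varepsilon>0$, derived by writing $|\phi(x)|^2$ for $x\in\Sigma$ as an integral of $\partial_r\big(\chi(r)|\phi|^2\big)$ along the inward normal geodesics $r\mapsto\exp_x(r\nu)$ of a collar, expanding the derivative, and applying Young's inequality to the cross term $\langle\nabla_{\partial_r}\phi,\phi\rangle$. Choosing $\varepsilon$ with $|c_2|\varepsilon\leq\tfrac12$ then absorbs the gradient part and leaves $Q(\phi,\phi)\geq(c_1+c_2C_\varepsilon)\|\phi\|_{L^2}^2$.

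The main obstacle is exactly this trace inequality, since $\Sigma$ is noncompact and the constant $C_\varepsilon$ must be chosen uniformly. This is where the standing Assumption \ref{assump} enters: the collar map $\Lambda_{r_0}$ furnishes Fermi coordinates of uniform width $r_0$, while the bounds on $B$ and on the sectional curvature along the collar control the Jacobian of $\Lambda_{r_0}$ from above and below, so the normal-geodesic integration goes through with constants depending only on $r_0$, $\underline\kappa$ and the curvature bounds. In the convex alternative one first reduces to this situation via the conformal deformation noted after Proposition \ref{extprop}. With the uniform trace estimate in hand the semiboundedness constant is global, completing the plan.
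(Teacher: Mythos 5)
Your derivation of the identity
\[
Q(\phi,\eta)=\int_X\langle\nabla\phi,\nabla\eta\rangle\,dX+\int_X\langle W\phi,\eta\rangle\,dX+\int_\Sigma\langle S\phi,\eta\rangle\,d\Sigma
\]
is exactly the paper's computation: the same Green identity (\ref{intpart}) with the same sign convention for the inward normal, and the same use of $\Pi_-\phi=\Pi_-\eta=0$ together with $\Pi_+(\nabla_\nu-S)\phi=0$ to convert the boundary term into $\int_\Sigma\langle S\phi,\eta\rangle\,d\Sigma$; symmetry then follows identically in both arguments. The two proofs part ways only on lower boundedness when $c_2<0$. The paper disposes of this case in one line, by ``adding a sufficiently large positive multiple of the identity to $S$'' so as to assume $c_2\geq 0$; you instead keep the given $S$ and absorb the negative boundary contribution through a trace inequality $\int_\Sigma|\phi|^2\,d\Sigma\leq\varepsilon\int_X|\nabla\phi|^2\,dX+C_\varepsilon\int_X|\phi|^2\,dX$ with uniform constants furnished by the collar and curvature bounds in Assumption \ref{assump}. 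Your route costs more work but is more self-contained, and arguably it is the honest version of the step: replacing $S$ by $S+\lambda\Pi_+$ changes both the domain $\mathcal D_S(\mathcal E)$ and the boundary integral in $Q$ by a term $\lambda\int_\Sigma|\phi|^2\,d\Sigma$ that is not controlled by $\|\phi\|_{L^2(X)}^2$, so the paper's normalization does not by itself yield lower boundedness of the original form relative to $L^2(X)$ --- precisely the estimate the Friedrichs construction requires. The one soft spot in your plan is the convex alternative of Assumption \ref{assump}, where no collar is hypothesized: the conformal deformation you invoke changes the metric, hence the form, so you would need the conformal factor (and its first derivatives) uniformly controlled to transfer the trace inequality back; alternatively, in that case one can run the same normal-direction integration against the distance function to $\Sigma$, using the Laplacian comparison afforded by $B\geq 0$ and the Ricci lower bound. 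With that point addressed, your argument is complete.
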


\begin{proof}
	By adding a sufficiently large positive multiple of the identity to $S$ we may assume that $c_2\geq 0$.
	Recall that the Bochner Laplacian is locally given  by 
	\[
	\nabla^*\nabla=-\sum_{i=1}^n\left(\nabla_{e_i}\nabla_{e_i}-\nabla_{\nabla_{e_i}e_i}\right). 
	\]
	By choosing the orthonormal frame $\{e_i\}$ so that $\nabla_{e_i}e_j=0$ at the given point and defining a vector field $Z$ on $M$ by $\langle Z,Y\rangle=\langle\nabla_Y\phi,\eta\rangle$ we have 
	\[
	{\rm div}\, Z= \sum_ie_i\langle\nabla_{e_i}\phi,\eta\rangle=-\langle\nabla^*\nabla\phi,\eta\rangle+\langle\nabla\phi,\nabla\eta\rangle,
	\]
	so that
	\begin{equation}\label{intpart}
	\int_M\langle\nabla^*\nabla\phi,\eta\rangle\,dM=\int_M\langle\nabla\phi,\nabla\eta\rangle\,dM+\int_\Sigma\langle\nabla_\nu\phi,\eta\rangle\,d\Sigma,
	\end{equation}
	But
	\begin{eqnarray*}
		\int_\Sigma\langle\nabla_\nu\phi,\eta\rangle\,d\Sigma 
		& = & \int_\Sigma\langle\nabla_\nu(\Pi_+\phi+\Pi_-\phi),\Pi_+\eta+\Pi_-\eta\rangle d\Sigma \\
		& = & \int_\Sigma \langle\Pi_+\nabla_\nu\phi,\Pi_+\eta\rangle
	 d\Sigma\\
	& = & 	\int_\Sigma\langle\Pi_+(\nabla_\nu- S)\phi,\Pi_+\eta\rangle
	 d\Sigma +\int_\Sigma\langle\Pi_+ S\phi,\Pi_+\eta\rangle d\Sigma,
	\end{eqnarray*}
so that 
\begin{equation}\label{form}
 Q(\phi,\eta)=\int_X\langle\nabla\phi,\nabla\eta\rangle dX+\int_X\langle W\phi,\eta\rangle dX+\int_\Sigma\langle S\phi,\eta\rangle d\Sigma.
\end{equation}
From this it is immediate that $Q$ is both symmetric and bounded from below. 
\end{proof}

Let us take $W$ and $S$ as in Theorem \ref{main}.  Thus, Proposition \ref{propmixed} applies and  the  quadratic form $Q$, which is associated to the densely defined unbounded operator $\Delta:\mathcal D_S(\mathcal E)\subset {L^2}(X,\mathcal E)\to {L^2}(X,\mathcal E)$, is closable and its closure, whose domain is contained in ${H^1}(X,\mathcal E)$, is still given by (\ref{form}). It is in this sense that the Friedrichs extension of $\Delta$, denoted $\Delta_{W,S}$, satisfies the given mixed boundary conditions. 

In fact, under Assumptions \ref{assumpwb} and \ref{assump} it is not hard to check that $\Delta_{W,S}|_{\mathcal D_S(\mathcal E)}$ is essentially selfadjoint so we may appeal to the spectral theorem to canonically construct the associated heat semigroup
\[
e^{-\frac{1}{2}t\Delta_{W,S}}:{L^2}(X,\mathcal E)\to {L^2}(X,\mathcal E),\quad t>0. 
\] 
In particular, if $\phi\in \mathcal D_S(\mathcal E)$ then $\phi_t=e^{-\frac{1}{2}t\Delta_{R,S}}\phi$ solves the corresponding heat equation:
\begin{equation}\label{heateq}
\frac{\partial\phi_t}{\partial t}+\frac{1}{2}\Delta_{W,S}\phi_t=0, \quad \lim_{t\to 0}\phi_t=\phi, \quad \Pi_+(\nabla_\nu-S)\phi_t=0,\quad \Pi_-\phi_t=0.
\end{equation}
Of course, it is precisely this semigroup that appears in Definition \ref{consforms}.

\begin{remark}\label{bdconditions}
	{\rm 
The most general kind of (differential) boundary conditions for generalized Laplacians takes the form
\begin{equation}\label{tangcond}
A\phi=0,\qquad C\phi+\sum_{j=1}^{n-1}E_j\nabla_{e_j}\phi+E\nabla_\nu\phi=0,
\end{equation}	
where  $\{e_j\}$ is a local orthonormal frame along $\Sigma$ and the coefficents (capital letters) are locally defined matrices acting on the components of $\phi$ and $\nabla_{e_j}\phi$. In this setting, the so-called Lopatinskij-Shapiro ellipticity condition reduces to verifying that the $\mathbb C$-linear map
\[
\mathfrak b_{(\xi,z)}\phi=
\left(
\begin{array}{c}
A\phi\\
\left(i\sum_jE_j\xi_j-\sqrt{|\xi|^2-z}\,E\right)\phi
\end{array}
\right)
\] 
is an isomorphism onto its image for any $(0,0)\neq (\xi,z)\in T^*\Sigma\times\mathcal K$, where $\mathcal K=\mathbb C-(0,+\infty)$ and we use here an appropriate branch for the square root; see \cite[Lemma 1.4.8]{Gi}. As expected, the matrix $C$ plays no role here, since only the symbol of the differential term in the second condition in (\ref{tangcond}) really matters. 
The usage of stochastic methods in Section \ref{semigroup}, which relies on certain curvature driven multiplicative functionals, forces us to choose the coefficients so as to eliminate the tangential derivatives in (\ref{tangcond}) while still keeping
ellipticity; compare to Remark \ref{finalr}. Given these constraints, we are basically led to set  $A=\Pi_-$, $C=-\Pi_+S$, $E=\Pi_+$ and $E_j=0$ as in Definition \ref{mixedcond}, so that  
\[
\mathfrak b_{(\xi,z)}\phi=
\left(
\begin{array}{c}
\Pi_-\phi\\
-\sqrt{|\xi|^2-z}\,\Pi_+\phi
\end{array}
\right),
\] 
is an isomorphism indeed. Notice that this latter assertion only depends on the existence of the involution $\mathcal I$, which determines the complementary projections $\Pi_\pm$. In particular, we see that the selfadjoint endomorphism $S$ only plays a role in assuring that the quadratic form $Q$ in  Proposition \ref{propmixed} is symmetric.  
}
\end{remark}

\section{The semigroup domination property}\label{semigroup}

In this section we prove the main technical result in the paper, namely, the domination property for the heat semigroup $e^{-\frac{1}{2}t\Delta_{W,S}}$ introduced in the previous section. The crucial point here is to make sure that $e^{-\frac{1}{2}t\Delta_{W,S}}\phi\in L^1(X,\mathcal E)$ whenever $\phi\in\mathcal D_S(\mathcal E)$, with an exponential bound on the norm of the corresponding linear map depending on the lower bounds imposed on $W$ and $S$; see Corollary 4.2. A key ingredient in the proof is a Feynman-Kac formula generalizing a previous result in \cite{dL1} for differential forms, from which a path integral representation for the associated heat kernel follows. We remark that nowhere in this section Assumption \ref{assdirac} is used, so all the results here actually hold for any generalized Laplacian $\Delta_{W,S}$ satisfying Assumption \ref{assumpwb}.  

Let ${\sf X}_t={\sf X}_t^{x}$, $t\geq 0$, be reflected Brownian motion on $X$ starting at some ${x}$. 
Since Assumption \ref{assump} is taken for granted, by Theorem \ref{consesp} we know that $X$ is stochastically complete (with respect to ${\sf X}_t$). In view of Proposition \ref{equiv}, this means that ${\sf X}_t$ is non-explosive, so the sample  paths ${\sf X}_t^x$ remain in $X$ for all time. 

Although this is not strictly required in the following, for simplicity we assume that $\mathcal E$ is tensorial in the sense that it is associated to some orthogonal representation $\rho$ of  ${\rm SO}_n$, the 
rotation group in dimension $n$. As a consequence, any section $\phi\in\Gamma(X,\mathcal E)$ can be identified to its $\rho$-equivariant lift $\phi^\dagger:P_{\rm SO}(X)\to V$, where $V$ is the representation space of $\rho$. Also, the heat operator $L$ in (\ref{heateq}) lifts to 
\[
L^\dagger=\frac{\partial}{\partial t}+\frac{1}{2}\Delta_{W,S}^\dagger, 
\]   
where 
\[
\Delta_{W,S}^\dagger=\nabla^*\nabla^\dagger+W^\dagger,
\]
and 
\[
\nabla^*\nabla^\dagger=-\sum_{i=1}^n\mathcal L_{H_i}^2
\]
is the {\em horizontal} Bochner Laplacian. Here, $\mathcal L$ is Lie derivative. Also, the boundary conditions in (\ref{mixedcond2}) lift to 
\[
\Pi_+^\dagger(\mathcal L_{\nu^\dagger}-S^\dagger)\phi^\dagger=0,\quad \Pi_-^\dagger\phi^\dagger=0. 
\] 
The advantage of lifting everything in sight to $P_{\rm SO}(X)$ is that, when doing computations in the framework of It\^o's stochastic calculus, we may work on the trivial vector bundle $\mathbb R^N\to\mathbb R^n$, $N={\rm rank}\,\mathcal E$,  where the anti-development  of $\widetilde {\sf X}_t$ lives (as already mentioned, this happens to be the standard Brownian motion $b_t$ in $\mathbb R^n$); see \cite{IW,El,Hs1} for details on this so-called Eells-Elworthy-Malliavin approach to diffusions on manifolds.

We use this formalism to obtain a stochastic representation for the action of the heat semigroup $e^{-\frac{1}{2}t\Delta_{W,S}}$ on $\mathcal D_S(\mathcal E)$; see Theorem \ref{mainfk} below. 
We start by observing that for 
each $W\in{L^2_{\rm loc}}(M,{\rm End}(\mathcal E))$ and $S\in{L^2_{\rm loc}}(\Sigma,{\rm End}(\mathcal E|_{\Sigma}))$ we may consider 
the pathwise solution $M_{W,S,t}\in{\rm End}(\mathbb R^N)$ of 
\begin{equation}\label{eqmf1}
dM_{W,S,t}+M_{W,S,t}\left(\frac{1}{2}W^\dagger({\widetilde {\sf X}}_t)dt+S^\dagger({\widetilde {\sf X}}_t) d\lambda_t\right)=0, \quad M_{W,S,0}=I;
\end{equation}
see \cite{DF}. 
Note that  the inverse process $M_{W,S,t}^{-1}$  satisfies
\begin{equation}\label{eqmf12}
dM_{W,S,t}^{-1}-\left(\frac{1}{2}W^\dagger({\widetilde {\sf X}}_t)dt+S^\dagger({\widetilde {\sf X}}_t) d\lambda_t\right)M_{W,S,t}^{-1}=0, \quad M_{W,S,0}^{-1}=I.
\end{equation}

For each $\epsilon>0$ and $S$ as above defining mixed boundary conditions let us set 
\[
S^\epsilon=S+\epsilon^{-1}\Pi_-.
\]
Notice that 
\begin{equation}\label{elimin}
(S^\epsilon)^\dagger\phi^\dagger=S^\dagger\phi^\dagger, \quad \phi\in\mathcal D(\mathcal E). 
\end{equation}
Also, in the following $\|\,\|$ is the operator norm in ${\rm End}(\mathbb R^N)$. 

\begin{proposition}\label{estimgeo}
	If Assumption \ref{assumpwb} holds and if $\epsilon>0$ satisfies $\epsilon^{-1}\geq c_2$ then
	\[
	\|M_{W,S^\epsilon,t}\|\leq\exp\left(-\frac{1}{2}\int_0^tw(\widetilde{\sf X}_s)ds-\int_0^t\sigma(\widetilde{\sf X}_s)d\lambda_s\right),
	\]
	where $w$ and $\sigma$ are given by (\ref{defr}) and (\ref{defsigma}), respectively.
	\end{proposition}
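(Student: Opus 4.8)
The plan is to treat \eqref{eqmf1} pathwise. The equation defining $M_{W,S^\epsilon,t}$ is driven only by $dt$ and by the boundary local time $d\lambda_t$, both of which are of finite variation; there is no martingale part. Hence along each fixed path It\^o calculus collapses to ordinary Stieltjes calculus, no quadratic-variation corrections arise, and the whole estimate reduces to an elementary Gronwall-type argument. I would make this observation explicit at the outset, since it is what justifies differentiating norms by the classical product rule below.

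To reach the operator norm I would pass to the transpose. Writing the driving increment as $A_t=\tfrac12 W^\dagger(\widetilde{\sf X}_t)\,dt+(S^\epsilon)^\dagger(\widetilde{\sf X}_t)\,d\lambda_t$, equation \eqref{eqmf1} reads $dM_{W,S^\epsilon,t}=-M_{W,S^\epsilon,t}A_t$. For a fixed $u\in\mathbb R^N$ set $\zeta_t=M_{W,S^\epsilon,t}^{\mathsf T}u$; since $W^\dagger$ and $S^\dagger$ are symmetric (being lifts of pointwise selfadjoint endomorphisms) one has $A_t^{\mathsf T}=A_t$, so $d\zeta_t=-A_t\zeta_t$ with $\zeta_0=u$. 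Because $\|M_{W,S^\epsilon,t}\|=\|M_{W,S^\epsilon,t}^{\mathsf T}\|=\sup_{|u|=1}|\zeta_t|$, it suffices to bound $|\zeta_t|$. By the product rule (no correction term, as noted),
\[
d|\zeta_t|^2=2\langle\zeta_t,d\zeta_t\rangle=-\langle\zeta_t,W^\dagger\zeta_t\rangle\,dt-2\langle\zeta_t,(S^\epsilon)^\dagger\zeta_t\rangle\,d\lambda_t.
\]

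The heart of the matter is to bound the two quadratic forms below by the scalars $w$ and $\sigma$. The interior term is immediate: $\langle\zeta,W^\dagger\zeta\rangle\geq w(\widetilde{\sf X}_t)|\zeta|^2$ straight from the definition \eqref{defr}. For the boundary term I would decompose $\zeta=\Pi_+^\dagger\zeta+\Pi_-^\dagger\zeta$ and use $(S^\epsilon)^\dagger=S^\dagger+\epsilon^{-1}\Pi_-^\dagger$ together with $S\Pi_\pm=\Pi_\pm S$, so that the form splits into $\mathcal F_+$- and $\mathcal F_-$-blocks:
\[
\langle\zeta,(S^\epsilon)^\dagger\zeta\rangle=\langle\Pi_+^\dagger\zeta,S^\dagger\Pi_+^\dagger\zeta\rangle+\epsilon^{-1}|\Pi_-^\dagger\zeta|^2\geq\sigma|\Pi_+^\dagger\zeta|^2+\epsilon^{-1}|\Pi_-^\dagger\zeta|^2.
\]
On $\mathcal F_+$ the bound by $\sigma$ is exactly \eqref{defsigma}. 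Since $S$ vanishes on $\mathcal F_-$, to obtain the clean estimate $\langle\zeta,(S^\epsilon)^\dagger\zeta\rangle\geq\sigma|\zeta|^2$ one needs the penalization constant to dominate $\sigma$ on that block, i.e. $\epsilon^{-1}\geq\sigma$; the hypothesis $\epsilon^{-1}\geq c_2$ is what is meant to secure this. Pinning down precisely why it suffices — for instance because $\sigma$ is to be read as the infimum over all of $\mathcal E|_\Sigma$ once $S$ is extended by zero, which forces $\sigma\leq 0$ on the $\mathcal F_-$ direction — is the bookkeeping I expect to be the most delicate part of the argument, and the one place where the Dirichlet penalization could in principle degrade the estimate.

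Granting the two lower bounds, I would conclude $d|\zeta_t|^2\leq-\bigl(w(\widetilde{\sf X}_t)\,dt+2\sigma(\widetilde{\sf X}_t)\,d\lambda_t\bigr)|\zeta_t|^2$ and integrate this scalar differential inequality pathwise (equivalently, compare $|\zeta_t|^2$ with the explicit solution of the associated linear equation) to get $|\zeta_t|^2\leq|u|^2\exp\bigl(-\int_0^t w(\widetilde{\sf X}_s)\,ds-2\int_0^t\sigma(\widetilde{\sf X}_s)\,d\lambda_s\bigr)$. Taking the supremum over unit $u$ and then a square root yields the asserted bound on $\|M_{W,S^\epsilon,t}\|$.
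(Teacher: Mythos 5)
Your argument is essentially the paper's own proof: it too passes to the transpose $M_{W,S^\epsilon,t}^{\bullet}$, differentiates $f(t)=|M_{W,S^\epsilon,t}^{\bullet}v|^2$ pathwise (no It\^o correction, the driving terms being of finite variation), bounds the resulting quadratic form below by $f(t)\left(w\,dt+2\sigma\,d\lambda_t\right)$ and integrates. The one subtlety you flag --- why the penalization term $\epsilon^{-1}|\Pi_-^\dagger\zeta|^2$ dominates $\sigma|\Pi_-^\dagger\zeta|^2$ --- is passed over in silence in the paper's proof, and your proposed resolution (read $\sigma$ as the infimum over all of $\mathcal E|_\Sigma$ once $S$ has been extended by zero, so that $\sigma\leq 0<\epsilon^{-1}$) is the natural way to make that step airtight.
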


\begin{proof}
	The key point here is to make sure that the righthand side does { not} depend on $\epsilon$, so it can be further estimated solely in terms of the lower bounds on $W$ and $S$. 
	Following \cite{Hs2}, we observe that it suffices to prove the result for $M_{W,S^\epsilon,t}^{\bullet}$, where the bulllet means transposition. Take $v\in\mathbb R^N$ and set $f(t)=|M_{W,S^\epsilon,t}^{\bullet}v|^2$. Then
	\begin{eqnarray*}
		df(t) & = & -2v^\bullet M_{W,S_\epsilon,t}\left(\frac{1}{2}W^\dagger({\widetilde {\sf X}}_t)dt+(S^\epsilon)^\dagger({\widetilde {\sf X}}_t) d\lambda_t\right)M_{W,S_\epsilon,t}^{\bullet}v\\
		&\leq & -f(t)\left(w(\widetilde{\sf X}_t)dt+2\sigma(\widetilde{\sf X}_t)d\lambda_t\right),
		\end{eqnarray*}
	and the result follows after integration.
\end{proof}

The following proposition is a key technical ingredient in our argument. It allows us to establish a Feynman-Kac formula for $e^{-\frac{1}{2}t\Delta_{W,S}}$ under the more restrictive assumption that $W$ and $S$ are {\em uniformly bounded}, i.e. bounded from above and below; see Theorem \ref{feyn-kac-form} below. 

\begin{proposition}\label{fkprep}
	Take $W$ and $S$ as above, with both being {uniformly bounded} and with $S$ defining mixed boundary conditions. Then, as $\epsilon\to 0$,  $M_{W,S^\epsilon,t}$ converges in $L^2$ to an adapted, right-continuous process $M_t$ with left limits. Furthermore, \begin{equation}\label{heateqlift}
	M_t\Pi^\dagger_-(\widetilde{\sf X}_t)=0,
	\end{equation} 
	whenever $\widetilde {\sf X}_t\in\pi^{-1}\Sigma$.  
	\end{proposition}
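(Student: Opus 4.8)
The plan is to read the statement as a singular (penalization) limit: the term $\epsilon^{-1}\Pi_-^\dagger\,d\lambda_t$ in (\ref{eqmf1}) forces, as $\epsilon\to 0$, the Dirichlet-type constraint $M_t\Pi_-^\dagger=0$ along the boundary, while the $\mathcal F_+$-dynamics stays governed by the finite endomorphism $S$. The first step is to record the uniform bounds that make the family $\{M_{W,S^\epsilon,t}\}_{\epsilon>0}$ compact enough to pass to the limit. Since $W$ and $S$ are uniformly bounded, $w$ and $\sigma$ are bounded above and below, so Proposition \ref{estimgeo} gives the pathwise estimate $\|M_{W,S^\epsilon,t}\|\le \exp(-\tfrac12 c_1 t - c_2\lambda_t)$, uniformly in $\epsilon\le\epsilon_0$; combined with the exponential integrability of $\lambda_t$ from Theorem \ref{intloct}, this yields $\sup_{\epsilon}\mathbb E_x[\sup_{s\le t}\|M_{W,S^\epsilon,s}\|^p]<\infty$ for every $p$. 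In particular the family is uniformly integrable, which is exactly what is needed at the end to upgrade pathwise convergence to $L^2$ convergence.

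The quantitative engine is a dissipation estimate obtained from the energy identity for $M^\epsilon:=M_{W,S^\epsilon}$. Following the transpose computation in the proof of Proposition \ref{estimgeo}, for $v\in\mathbb R^N$ one finds, writing $(S^\epsilon)^\dagger=S^\dagger+\epsilon^{-1}\Pi_-^\dagger$ and using $\Pi_-=\Pi_-^2\ge 0$,
\[
d|M_t^{\epsilon\bullet}v|^2 \le -\bigl(w(\widetilde{\sf X}_t)\,dt + 2\sigma(\widetilde{\sf X}_t)\,d\lambda_t\bigr)|M_t^{\epsilon\bullet}v|^2 - 2\epsilon^{-1}\,|\Pi_-^\dagger M_t^{\epsilon\bullet}v|^2\,d\lambda_t .
\]
Integrating, taking expectations, and absorbing the (bounded) $w,\sigma$ contributions with the uniform moment bounds of the first step gives the key inequality $\mathbb E_x\!\int_0^t|\Pi_-^\dagger M_s^{\epsilon\bullet}v|^2\,d\lambda_s \le C\epsilon\,|v|^2$. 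This says precisely that the $\mathcal F_-$-content of $M^\epsilon$ measured on the boundary (against $d\lambda$) vanishes, and it is the mechanism behind (\ref{heateqlift}).

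Next I would establish the convergence itself. For each fixed path the equation (\ref{eqmf1}) for $M^\epsilon$ is a linear ODE driven by $dt$ and by the measure $d\lambda_t$, with penalizing drift $\epsilon^{-1}\Pi_-^\dagger\,d\lambda_t$; as $\epsilon\to 0$ this is a classical penalization whose limit is the constrained evolution, equal to the dynamics of $M_{W,S}$ away from $\pi^{-1}\Sigma$ and annihilating the $\mathcal F_-$-component through the local time at each visit to $\pi^{-1}\Sigma$. One therefore expects, and should verify, pathwise convergence $M_t^\epsilon\to M_t$ to an adapted limit that is right-continuous with left limits but genuinely discontinuous, the jumps occurring at boundary-hitting times where there is $\mathcal F_-$-content to kill; the scalar model $\mathcal E|_\Sigma=\mathcal F_-$, $W=S=0$, where $M_t^\epsilon=e^{-\epsilon^{-1}\lambda_t}\to\mathbf{1}_{\{\lambda_t=0\}}$, already exhibits this jump. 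The uniform integrability from the first step then promotes this to the asserted $L^2$ convergence via Vitali's theorem, and (\ref{heateqlift}) follows by passing to the limit in the dissipation estimate, which forces $M_s\Pi_-^\dagger=0$ for $d\lambda_s$-a.e. $s$; right-continuity of $s\mapsto M_s\Pi_-^\dagger(\widetilde{\sf X}_s)$ then upgrades this to the pointwise statement on $\{\widetilde{\sf X}_t\in\pi^{-1}\Sigma\}$.

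The main obstacle I anticipate is exactly the singular nature of this limit. The penalizing measure $d\lambda_t$ is carried by the random set $\{\,t:\widetilde{\sf X}_t\in\pi^{-1}\Sigma\,\}$, which has Lebesgue measure zero yet positive $\lambda$-measure, and the limit process is not continuous, so one cannot argue in a continuous-path topology or by naive Gronwall estimates: crude bounds on the difference $M^\epsilon_t-M^{\epsilon'}_t$ only give boundedness, not smallness, because the interior coupling through $W$ rotates residual $\mathcal F_-$-content back into the surviving $\mathcal F_+$-directions between boundary visits. Controlling this interplay between the interior dynamics and the boundary penalization — making the pathwise penalization limit rigorous and transferring it, through the dissipation estimate and uniform integrability, into genuine $L^2$ convergence together with the boundary identity — is the technical heart of the argument, and is where I would lean on the adaptations of \cite{Hs2,dL1} indicated above.
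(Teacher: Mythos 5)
The paper's own ``proof'' of Proposition \ref{fkprep} is purely a citation --- it points to Hsu \cite{Hs2} (and its extension in \cite{dL1}), asserts the argument survives under Assumption \ref{assump} because Theorem \ref{intloct} still holds, and leaves all details to the reader --- and your outline is a faithful reconstruction of exactly that penalization argument: uniform moment bounds from Proposition \ref{estimgeo} together with the exponential integrability of $\lambda_t$, the $\epsilon^{-1}$-dissipation estimate that kills the $\mathcal F_-$-content against $d\lambda$ and yields (\ref{heateqlift}), and uniform integrability to pass to $L^2$, with the correct c\`adl\`ag scalar model $e^{-\epsilon^{-1}\lambda_t}\to\mathbf 1_{\{\lambda_t=0\}}$. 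The single step you explicitly defer to \cite{Hs2} --- the $L^2$-Cauchy property of $\{M_{W,S^\epsilon,t}\}_{\epsilon}$, where the interior coupling through $W$ must be prevented from regenerating $\mathcal F_-$-content between boundary visits --- is precisely the step the paper also delegates to \cite{Hs2}, so your account is, if anything, more detailed than the paper's.
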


\begin{proof}
	This has been first proved in \cite{Hs2} for $1$-forms, i.e.  $\mathcal E=\wedge^1T^*X$, $W={\rm Ric}$ and  $S=B$, under the assumption  that $X$ is compact. It has been observed in \cite{dL1} that the same proof works  for $p$-forms on a non-compact manifold with bounded geometry in the sense of \cite{Schi}; hence, using the notation in Subsection \ref{diffforms}, in this case we take $\mathcal E=\wedge^pT^*X$, $W=R_p$ and $S=\mathcal B_p$. As a careful analysis of the original proof confirms, the same argument still works fine if more generally $X$ has controlled geometry in  the sense of Assumption \ref{assump}, so that 
	the integrability result in Theorem \ref{intloct} holds, and both $W$ and $S$ are uniformly bounded. We leave the details to the interested reader. 
\end{proof} 

Now, let $\phi\in\mathcal D_S(\mathcal E))$, so that 
$\phi_t^\dagger=e^{-\frac{1}{2}t\Delta^\dagger_{W,S}}\phi^\dagger$
is the solution to 
\begin{equation}\label{solhom}
L^\dagger\phi^\dagger_t=0,\quad \lim_{t\to +\infty}\phi^\dagger_t=\phi^\dagger,\quad  \Pi_+^\dagger(\mathcal L_{\nu^\dagger}-S^\dagger)\phi_t^\dagger=0,\quad \Pi_-^\dagger\phi_t^\dagger=0.
\end{equation} 
Then a simple application of It\^o's formula to the process $M_{W,S^\epsilon,t}\phi^\dagger_{T-t}({\widetilde {\sf X}}_t)$, $0\leq t\leq T$,  yields in the limit $\epsilon\to 0$ the following fundamental Feynman-Kac formula, which generalizes \cite[Theorem 5.2]{dL1}. 

\begin{theorem}
	\label{feyn-kac-form}
				Assume that $W$ and $S$ are as above, with both being {uniformly} bounded and with $S$ defining mixed boundary conditions. 
		Then  
	\begin{equation}
	\label{feyn-kac2}
	\phi^\dagger_t(\widetilde x)=\mathbb E_{{\widetilde x}}(M_t\phi^\dagger({\widetilde {\sf X}}_t^{x})).
	\end{equation}
	Equivalently, 
	\begin{equation}
	\label{feyn-kac3}
	(e^{-\frac{1}{2}t\Delta_{W,S}}\phi)(x)=\mathbb E_{x}(M_tJ_t\phi({\sf X}_t^{x})),
	\end{equation}
	where $J_t$ is the (reversed) stochastic parallel transport acting on sections of $\mathcal E$ and we use the standard identification $\phi^\dagger=J_t\phi$.    
\end{theorem}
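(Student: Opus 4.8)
The plan is to apply It\^o's formula to the process $N^\epsilon_s:=M_{W,S^\epsilon,s}\,\phi^\dagger_{T-s}(\widetilde{\sf X}_s)$, $0\le s\le T$, for a fixed $T>0$ and $\phi\in\mathcal D_S(\mathcal E)$, and then to let $\epsilon\to 0$ using Proposition \ref{fkprep}. Writing $v(s,\cdot)=\phi^\dagger_{T-s}$, the backward heat equation $\partial_s v=\tfrac12\Delta^\dagger_{W,S}v=\tfrac12\big(-\sum_i\mathcal L_{H_i}^2+W^\dagger\big)v$ holds by \ref{solhom}. Since $M_{W,S^\epsilon,s}$ is driven only by $ds$ and $d\lambda_s$, both of finite variation, its covariation with the martingale part of $v(s,\widetilde{\sf X}_s)$ vanishes, so the product rule gives simply $dN^\epsilon_s=(dM_{W,S^\epsilon,s})\,v+M_{W,S^\epsilon,s}\,dv$.

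First I would perform the interior cancellation. Expanding $dv(s,\widetilde{\sf X}_s)$ by means of \ref{stocut} and converting the Stratonovich horizontal part to It\^o form produces a second-order term $\tfrac12\sum_i\mathcal L_{H_i}^2 v\,ds$ that exactly cancels the $-\tfrac12\sum_i\mathcal L_{H_i}^2 v\,ds$ coming from $\partial_s v$, leaving $dv=\tfrac12 W^\dagger v\,ds+\sum_i(\mathcal L_{H_i}v)\,db^i_s+(\mathcal L_{\nu^\dagger}v)\,d\lambda_s$. On the other hand \ref{eqmf1} yields $(dM_{W,S^\epsilon,s})\,v=-M_{W,S^\epsilon,s}\big(\tfrac12 W^\dagger v\,ds+(S^\epsilon)^\dagger v\,d\lambda_s\big)$, so the two Weitzenb\"ock $ds$-terms cancel and the whole bulk drift disappears:
\[
dN^\epsilon_s=M_{W,S^\epsilon,s}\sum_i(\mathcal L_{H_i}v)\,db^i_s+M_{W,S^\epsilon,s}\big((\mathcal L_{\nu^\dagger}-(S^\epsilon)^\dagger)v\big)\,d\lambda_s.
\]

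Next I would treat the boundary drift, which charges only the times when $\widetilde{\sf X}_s\in\pi^{-1}\Sigma$. Decomposing the bracket with $\Pi_\pm^\dagger$, the $\Pi_+^\dagger$-component equals $\Pi_+^\dagger(\mathcal L_{\nu^\dagger}-S^\dagger)v$ (since $\Pi_+^\dagger(S^\epsilon)^\dagger=\Pi_+^\dagger S^\dagger$) and hence vanishes by the Robin condition in \ref{solhom}; the $\Pi_-^\dagger$-component is $\Pi_-^\dagger\mathcal L_{\nu^\dagger}v-\epsilon^{-1}\Pi_-^\dagger v$, whose penalization term drops by the Dirichlet condition $\Pi_-^\dagger v=0$ on $\Sigma$, cf.\ \ref{elimin}. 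Thus only the boundary drift $M_{W,S^\epsilon,s}\Pi_-^\dagger(\mathcal L_{\nu^\dagger}v)\,d\lambda_s$ survives; it does \emph{not} vanish for fixed $\epsilon$, and it is precisely the penalization built into $S^\epsilon$ that forces the limit $M_s$ to annihilate $\Pi_-^\dagger$ along $\Sigma$ by \ref{heateqlift}. Taking $\mathbb E_{\widetilde x}$ and using that the stochastic integral has zero mean, we obtain
\[
\mathbb E_{\widetilde x}\big[M_{W,S^\epsilon,T}\phi^\dagger(\widetilde{\sf X}_T)\big]-\phi^\dagger_T(\widetilde x)=\mathbb E_{\widetilde x}\Big[\int_0^T M_{W,S^\epsilon,s}\Pi_-^\dagger(\mathcal L_{\nu^\dagger}v)\,d\lambda_s\Big],
\]
and letting $\epsilon\to 0$ the left side converges, by the $L^2$ convergence $M_{W,S^\epsilon,\cdot}\to M_\cdot$ of Proposition \ref{fkprep}, to $\mathbb E_{\widetilde x}[M_T\phi^\dagger(\widetilde{\sf X}_T)]-\phi^\dagger_T(\widetilde x)$, while the right side tends to $0$ since its limiting integrand is annihilated on the support $\pi^{-1}\Sigma$ of $d\lambda_s$. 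This establishes \ref{feyn-kac2}, and \ref{feyn-kac3} then follows by projecting along $\pi$ and writing $\phi^\dagger=J_t\phi$.

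The step I expect to be the main obstacle is the analytic justification of this limiting procedure, together with the martingale property invoked to discard the stochastic integral: one must supply uniform-in-$\epsilon$ integrable bounds, controlling $\|M_{W,S^\epsilon,s}\|$ through Proposition \ref{estimgeo} and the exponential local-time integrability of Theorem \ref{intloct}, and controlling $\mathcal L_{H_i}v$ and $\mathcal L_{\nu^\dagger}v$ through the uniform bounds on $W$ and $S$ and the ensuing gradient estimates for the heat semigroup, so as to interchange limit and expectation legitimately and to confirm that both the stochastic integral and the boundary integral are uniformly integrable.
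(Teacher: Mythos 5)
Your proposal follows essentially the same route as the paper: It\^o's formula applied to $M_{W,S^\epsilon,t}\phi^\dagger_{T-t}(\widetilde{\sf X}_t)$, cancellation of the bulk drift via the lifted heat equation, elimination of the boundary drift via the mixed conditions in (\ref{solhom}) together with the $\epsilon\to 0$ limit from Proposition \ref{fkprep} and the annihilation property (\ref{heateqlift}), and finally equating expectations at $t=0$ and $t=T$. The only cosmetic difference is that you take expectations before passing to the limit in $\epsilon$ (isolating the surviving term $M_{W,S^\epsilon,s}\Pi_-^\dagger(\mathcal L_{\nu^\dagger}v)\,d\lambda_s$ explicitly), whereas the paper passes to the limit in the SDE first; your closing remarks correctly identify the uniform integrability issues that the paper also leaves implicit.
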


\begin{proof}
	With the help of (\ref{stocut}),	It\^o's formula gives 
	\begin{eqnarray*}
		d M_{W,S^\epsilon,t}\phi^\dagger_{T-t}(\widetilde {\sf X}_t^{x}) & = & 
		\left\langle M_{W, S^\epsilon,t}\mathcal L_{H}\phi^\dagger_{T-t}(\widetilde {\sf X}_t^{x}),db_t\right\rangle
		- M_{W,S^\epsilon,t} L^\dagger\phi^\dagger_{T-t}(\widetilde {\sf X}_t^{x})dt\\
		& & \quad + M_{W,S^\epsilon,t}\left(\mathcal L_{\nu^\dagger}-S^\dagger-\epsilon^{-1}\Pi_{-}^\dagger\right)\phi^\dagger_{T-t}(\widetilde {\sf X}_t^{x})d\lambda_t,
	\end{eqnarray*} 
where $\mathcal L_H=(\mathcal L_{H_1},\cdots,\mathcal L_{H_n})$.
Due to (\ref{solhom}), both the second term and the term involving $\epsilon^{-1}$ on the righthand side vanish. 
Sending $\epsilon\to 0$ and using Proposition \ref{fkprep} we end up with 
	\begin{eqnarray*}
	d M_{t}\phi^\dagger_{T-t}(\widetilde {\sf X}_t^{x}) & = & 
	\left\langle M_{t}\mathcal L_{H}\phi^\dagger_{T-t}(\widetilde {\sf X}_t^{x}),db_t\right\rangle\\
	& & \quad + M_{t}\Pi_{+}^\dagger\left(\mathcal L_{\nu^\dagger}-S^\dagger\right)\phi^\dagger_{T-t}(\widetilde {\sf X}_t^{x})d\lambda_t,
	\end{eqnarray*} 	
	where the insertion of  $\Pi^\dagger_{+}$ in the last term is justified by (\ref{heateqlift}).  
	Again by (\ref{solhom}) this reduces to   	
	\[
	dM_t\phi^\dagger_{T-t}(\widetilde {\sf X}_t^{x})  =  \left\langle M_t\mathcal L_{H}\phi^\dagger_{T-t}(\widetilde {\sf X}_t^{x}),db_t\right\rangle,
	\]
	thus showing that $M_t\phi^\dagger_{T-t}(\widetilde {\sf X}_t^{x})$ is a (local) martingale. The result now follows by equating   expectations of this process at $t=0$ and $t=T$.
\end{proof}

Our aim now is to extend the Feynman-Kac formula  (\ref{feyn-kac3}) 
to the case in which $R$ and $S$ are merely assumed to be bounded from below; see Theorem \ref{mainfk} below. For this we rely on the results above to implement an approximation scheme adapted from \cite{Gu2}; see also \cite{DF} for similar arguments. 

We start with a comparison estimate holding in the general context of solutions of (\ref{eqmf1}). In order to simplify the notation in the following we sometimes write $w(t)=w({\widetilde{\sf X}_t})$, $W_1(t)=W_1({\widetilde{\sf X}_t})$, etc. Also, recall that $N={\rm rank}\,\mathcal E$.

\begin{proposition}\label{gron}
	For each $t>0$ we have the pathwise estimate
	\begin{eqnarray*}
	\|M_{W_1,S_1,t}-M_{W_2,S_2,t}\|& \leq &  e^{\int_0^t\left(\frac{1}{2}\|W_1(s)\|ds+\|S_1(s)\|d\lambda_s\right)+
	2\int_0^t\left(\frac{1}{2}\|W_2(s)\|ds+\|S_2(s)\|d\lambda_s\right)}\times\\
	& & \qquad\times \int_0^t\left(\frac{1}{2}\|W_1(s)-W_2(s)\|ds+\|S_1(s)-S_2(s)\|d\lambda_s\right).
	\end{eqnarray*}
	\end{proposition}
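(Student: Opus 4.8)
The plan is to treat \eqref{eqmf1} pathwise. Since this equation carries no Brownian (martingale) term, for each fixed sample path $s\mapsto\widetilde{\sf X}_s$ the two driving measures $ds$ and $d\lambda_s$ are of finite variation and $\lambda_s$ is continuous, so \eqref{eqmf1} is a linear matrix equation in the Lebesgue--Stieltjes sense and the whole argument reduces to deterministic linear ODE theory along the path. Write $M_i=M_{W_i,S_i,t}$ and abbreviate the driving increment
\[
dA_i(s)=\tfrac12 W_i^\dagger(\widetilde{\sf X}_s)\,ds+S_i^\dagger(\widetilde{\sf X}_s)\,d\lambda_s,
\]
so that $dM_i=-M_i\,dA_i$ while, by \eqref{eqmf12}, $dM_i^{-1}=dA_i\,M_i^{-1}$. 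First I would record the basic a priori bounds by rerunning the argument of Proposition \ref{estimgeo}: for $v\in\mathbb R^N$ the process $z(s)=M_i^\bullet(s)v$ solves $dz=-dA_i\,z$, and using selfadjointness of $W_i^\dagger,S_i^\dagger$ together with the crude estimate $\langle z,W_i^\dagger z\rangle\geq-\|W_i\|\,|z|^2$ (and similarly for $S_i$) in place of the sharp lower bounds $w,\sigma$, Gronwall yields
\[
\|M_i(s)\|\leq\exp\!\left(\int_0^s\bigl(\tfrac12\|W_i\|\,dr+\|S_i\|\,d\lambda_r\bigr)\right),
\]
and the same bound for $\|M_i^{-1}(s)\|$, obtained identically from \eqref{eqmf12} via $y(s)=M_i^{-1}(s)v$ (here no transposition is needed, since in \eqref{eqmf12} the coefficient multiplies on the left).

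Next I would set $D=M_1-M_2$ and derive a closed inhomogeneous equation for it. Adding and subtracting $M_1\,dA_2$,
\[
dD=-M_1\,dA_1+M_2\,dA_2=-D\,dA_2-M_1\,(dA_1-dA_2),\qquad D(0)=0.
\]
This is a linear Stieltjes equation for $D$ with homogeneous part governed by $-dA_2$ and forcing $-M_1(dA_1-dA_2)$, which I would solve by variation of constants. Writing $D=C\,M_2$ and using $dM_2=-M_2\,dA_2$ (no quadratic covariation correction, as both factors are continuous and of finite variation), the product rule gives $dD=dC\,M_2-D\,dA_2$, whence $dC=-M_1(dA_1-dA_2)M_2^{-1}$ with $C(0)=0$. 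This produces the Duhamel representation
\[
D(t)=-\left(\int_0^t M_1(s)\,\bigl(dA_1(s)-dA_2(s)\bigr)\,M_2^{-1}(s)\right)M_2(t).
\]

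Finally, taking operator norms and using submultiplicativity together with $\|dA_1-dA_2\|\leq\tfrac12\|W_1-W_2\|\,ds+\|S_1-S_2\|\,d\lambda_s$, I would insert the a priori bounds from the first step for $\|M_1(s)\|$, $\|M_2^{-1}(s)\|$ and $\|M_2(t)\|$, and then enlarge each exponent's domain of integration from $[0,s]$ (respectively $\{t\}$) to all of $[0,t]$, so that the three exponential factors become constants that pull out of the integral. The two factors $\|M_2^{-1}(s)\|$ and $\|M_2(t)\|$ each contribute the full $[0,t]$ integral of $\tfrac12\|W_2\|\,dr+\|S_2\|\,d\lambda_r$, which is exactly the origin of the factor $2$ in the stated bound; collecting terms yields the claimed inequality.

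The calculations are routine once this scheme is fixed; the only points requiring care are the non-commutative matrix ordering in the Duhamel formula (the factor $M_1$ stays on the left and $M_2^{-1}M_2(t)$ on the right, in that order) and the justification of the product rule for the integrators $ds$ and $d\lambda_s$, which is unproblematic precisely because $\lambda_s$ is continuous, so no jump corrections arise. I would also note that the factor $2$ is wasteful: bounding instead the single propagator $M_2^{-1}(s)M_2(t)$ directly over $[s,t]$ would replace it by $1$. Since Proposition \ref{gron} serves only as an auxiliary estimate in the approximation scheme leading to Theorem \ref{mainfk}, the cruder bound as stated is entirely sufficient.
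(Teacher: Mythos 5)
Your proof is correct and follows essentially the same route as the paper: an explicit Duhamel/variation-of-constants representation for $M_{W_1,S_1,t}-M_{W_2,S_2,t}$, followed by Gronwall bounds on $\|M_{W_i,S_i,s}\|$ and $\|M_{W_2,S_2,s}^{-1}\|$, with the enlargement of the integration domains from $[0,s]$ to $[0,t]$ producing the factor $2$ on the $W_2,S_2$ terms exactly as you describe. In fact your ordering (differentiating $(M_1-M_2)M_2^{-1}$, equivalently $M_1M_2^{-1}$, rather than $M_2^{-1}M_1$) is the one that makes the non-commutative bookkeeping come out cleanly: the paper's displayed identity for $d\bigl(M_{W_2,S_2,t}^{-1}M_{W_1,S_1,t}\bigr)$ tacitly commutes the driving increment past $M_{W_2,S_2,t}^{-1}$, a harmless slip for the final norm estimate which your version avoids.
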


\begin{proof}
	From (\ref{eqmf1}) and (\ref{eqmf12}),
	\begin{eqnarray*}
	{d}\left(M_{W_2,S_2,t}^{-1}M_{W_1,S_1,t}\right)& = &M_{W_2,S_2,t}^{-1}\times\\
	& & \times  \left(\frac{1}{2}\left(W_2(t)-W_1(t)\right)dt+\left(S_2(t)-S_1(t)\right){d\lambda_t}\right)M_{W_1,S_1,t},
	\end{eqnarray*}
	so that 
	\[
	\begin{array}{l}
	M_{W_1,S_1,t}    =  M_{W_2,S_2,t}+\\
	\quad +M_{W_2,S_2,t}\int_0^tM_{W_2,S_2,s}^{-1}
	\left(\frac{1}{2}\left(W_1(s)-W_2(s)\right)dt+\left(S_1(s)-S_2(s)\right){d\lambda_s}\right)M_{W_1,S_1,s}.
	\end{array}
	\]
Thus, 
\[
\begin{array}{l}
\|	M_{W_1,S_1,t} - M_{W_2,S_2,t}\|\leq \|M_{W_2,S_2,t}\|\times\\
\quad\times \int_0^t\|M_{W_2,S_2,s}^{-1}\|M_{W_1,S_1,s}\|
\left(\frac{1}{2}\left\|W_1(s)-W_2(s)\right\|ds+\left\|S_1(s)-S_2(s)\right\|{d\lambda_s}\right).
\end{array}
\]
The result now follows since we can easily estimate the norms $ \|M_{W_i,S_i,t}\|$ and $ \|M_{W_i,S_i,t}^{-1}\|$ in the indicated way via Gronwall's inequality.
\end{proof}

Now we will be able to implement the approximation scheme. So we consider $W$ and $S$, both bounded from below. 
Define a sequence $\{W_i\}$ by setting $W_i={\min}\{W,i\,{\rm Id}\}$ fiberwise and similarly for $\{S_i\}$. It follows that $W_i$ and $S_i$ are uniformly bounded and  $\|W_i(x)- W(x)\|\to 0$ and $\|S_i(x)- S(x)\|\to 0$ as $i\to +\infty$, $x\in X$. Also, the convergences are monotone nondecreasing in the obvious sense. Moreover, as a result of this procedure we see that any $\phi\in\mathcal D_S(\mathcal E)$ can be written as $\phi=\lim_{i\to +\infty}\phi_i$, $\phi_i\in\mathcal D_{S_i}(\mathcal E)$. 

\begin{proposition}\label{estmult}
	For each $t>0$ and $\epsilon>0$ we have the pathwise convergence
	\[
	\lim_{i\to+\infty}\|M_{W_i,S_i^\epsilon,t}-M_{W,S^\epsilon,t}\|=0
	\]
\end{proposition}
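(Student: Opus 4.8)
The plan is to deduce this directly from the comparison estimate of Proposition \ref{gron}, applied with $W_1=W_i$, $S_1=S_i^\epsilon$ and $W_2=W$, $S_2=S^\epsilon$. The first observation is that the regularization of $S$ is harmless: since $S_i^\epsilon-S^\epsilon=S_i-S$, the term $\epsilon^{-1}\Pi_-$ being common to both, the ``difference factor'' in Proposition \ref{gron} reads
\[
Q_i(t):=\int_0^t\left(\tfrac12\|W_i(s)-W(s)\|\,ds+\|S_i(s)-S(s)\|\,d\lambda_s\right),
\]
and the whole estimate becomes $\|M_{W_i,S_i^\epsilon,t}-M_{W,S^\epsilon,t}\|\le e^{P_i(t)}Q_i(t)$, with $P_i(t)$ the exponent appearing in that proposition. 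Everything then reduces to two independent tasks: bounding the prefactor $e^{P_i(t)}$ uniformly in $i$, and showing $Q_i(t)\to 0$.

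For the prefactor, the key elementary remark is that truncation via functional calculus does not increase operator norms: writing the eigenvalues of $W(s)$ as $\lambda_j(s)$, the endomorphism $W_i(s)$ has eigenvalues $\min\{\lambda_j(s),i\}$, whence $|\min\{\lambda_j(s),i\}|\le|\lambda_j(s)|$ for $i>0$ and therefore $\|W_i(s)\|\le\|W(s)\|$; the same block-diagonal argument on $\mathcal F_+\oplus\mathcal F_-$ gives $\|S_i^\epsilon(s)\|\le\|S^\epsilon(s)\|$. Consequently $P_i(t)\le P(t):=3\int_0^t\left(\tfrac12\|W(s)\|\,ds+\|S^\epsilon(s)\|\,d\lambda_s\right)$, a bound independent of $i$. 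I would then note that $P(t)$ is finite along $\mathbb P_x$-almost every path: this is exactly the integrability underlying the pathwise construction of the multiplicative functionals in \eqref{eqmf1}, and for the boundary term one also uses $\|S^\epsilon\|\le\|S\|+\epsilon^{-1}$ together with the almost sure finiteness of $\lambda_t$ (cf. Theorem \ref{intloct}). Thus $e^{P_i(t)}\le e^{P(t)}<\infty$ uniformly in $i$, on a set of full measure.

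It remains to prove $Q_i(t)\to 0$, and here I would invoke dominated convergence on the fixed path. For each $s$, since $W_i\to W$ fiberwise and monotonically, one has $\|W_i(s)-W(s)\|=\max\{\lambda_{\max}(s)-i,0\}\to 0$ together with the pointwise bound $\|W_i(s)-W(s)\|\le\|W(s)\|$; likewise $\|S_i(s)-S(s)\|\to 0$ with $\|S_i(s)-S(s)\|\le\|S(s)\|$. Since $\tfrac12\|W(s)\|$ is $ds$-integrable and $\|S(s)\|$ is $d\lambda_s$-integrable on $[0,t]$ (again on the full-measure set above), dominated convergence applied separately to the two pieces yields $Q_i(t)\to 0$. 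Combining this with the uniform prefactor bound gives $\|M_{W_i,S_i^\epsilon,t}-M_{W,S^\epsilon,t}\|\le e^{P(t)}Q_i(t)\to 0$, as claimed.

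The step I expect to be the genuine obstacle is the justification that the dominating integrals $\int_0^t\|W(\widetilde{\sf X}_s)\|\,ds$ and $\int_0^t\|S(\widetilde{\sf X}_s)\|\,d\lambda_s$ are almost surely finite: since $W$ and $S$ are only controlled from below and lie in $L^2_{\mathrm{loc}}$, their top eigenvalues need not be bounded, so this finiteness is what confines the whole argument to a full-measure event and is precisely the hypothesis making \eqref{eqmf1} solvable pathwise. Everything else is soft, namely the norm monotonicity under truncation and the two applications of dominated convergence.
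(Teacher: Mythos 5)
Your proposal is correct and follows essentially the same route as the paper: apply Proposition \ref{gron}, use the monotonicity of the truncations to bound the exponential prefactor by $\exp\left(3\int_0^t\left(\tfrac12\|W(s)\|\,ds+\|S^\epsilon(s)\|\,d\lambda_s\right)\right)$ uniformly in $i$, and conclude by dominated convergence on almost every path. The one point you flag as the genuine obstacle --- the almost sure finiteness of $\int_0^t\|W(\widetilde{\sf X}_s)\|\,ds$ and $\int_0^t\|S^\epsilon(\widetilde{\sf X}_s)\|\,d\lambda_s$ --- is exactly the input the paper invokes (as a ``well-known'' consequence of the $L^2_{\rm loc}$ hypotheses), so your account matches the published argument in both structure and substance.
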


\begin{proof}
	From Proposition \ref{gron} and the nondecreasing monotone convergence, 
\begin{eqnarray*}
	\|M_{W_i,S_i^\epsilon,t}-M_{W,S^\epsilon,t}\|& \leq &  e^{3\int_0^t\left(\frac{1}{2}\|W(s)\|ds+\|S^\epsilon(s)\|d\lambda_s\right)}\times\\
	& & \qquad\times \int_0^t\left(\frac{1}{2}\|W_i(s)-W(s)\|ds+\|S_i(s)-S(s)\|d\lambda_s\right).
	\end{eqnarray*}
Consider ${\sf w}=\|W\|\in L^2_{\rm loc}(X)$ and ${\sf s}^\epsilon=\|S^\epsilon\|\in L^2_{\rm loc}(\Sigma)$. It is well-known that for any $t>0$ and almost every path ${\sf X}_s^x$ we have 
\[
\int_0^t|{\sf w}({\sf X}_s^x)|ds<+\infty\quad {\rm and}\quad \int_0^t|{\sf s}^\epsilon({\sf X}_s^x)|d\lambda_s<+\infty.
\]
Thus, 
\[
\|M_{W_i,S_i^\epsilon,t}-M_{W,S^\epsilon,t}\|\leq 
C_{t,\epsilon}\int_0^t\left(\frac{1}{2}\|W_i(s)-W(s)\|ds+\|S_i(s)-S(s)\|d\lambda_s\right),
\]
and the result follows by dominated convergence.
\end{proof}

\begin{proposition}\label{estmultcor} 
	For each $\epsilon>0$, 
	\[
	\lim_{i\to +\infty}\mathbb E_x\|M_{W_i,S_i^\epsilon,t}-M_{W,S^\epsilon,t}\|^2=0.
	\]
	\end{proposition}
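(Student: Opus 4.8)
The plan is to promote the pathwise convergence already secured in Proposition~\ref{estmult} to convergence in $L^2(\mathbb P_x)$ by means of the dominated convergence theorem. Thus the entire task reduces to producing a single $\mathbb P_x$-integrable random variable that dominates $\|M_{W_i,S_i^\epsilon,t}-M_{W,S^\epsilon,t}\|^2$ uniformly in $i$.

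First I would use the triangle inequality to write $\|M_{W_i,S_i^\epsilon,t}-M_{W,S^\epsilon,t}\|\leq \|M_{W_i,S_i^\epsilon,t}\|+\|M_{W,S^\epsilon,t}\|$, so that it suffices to dominate each multiplicative functional separately. The key observation is that, although the cutoff raises the upper bounds of $W_i$ and $S_i$ as $i$ grows, their \emph{lower} bounds are untouched: since $W\geq c_1$ and $S\geq c_2$, the truncated endomorphisms satisfy $w_i\geq c_1$ and $\sigma_i\geq c_2$ for all $i\geq\max\{c_1,c_2\}$. Fixing $\epsilon>0$ with $\epsilon^{-1}\geq c_2$ and applying Proposition~\ref{estimgeo} with the pair $(W_i,S_i)$ in place of $(W,S)$ then gives
\[
\|M_{W_i,S_i^\epsilon,t}\|\leq\exp\left(-\tfrac{1}{2}\int_0^tw_i(\widetilde{\sf X}_s)\,ds-\int_0^t\sigma_i(\widetilde{\sf X}_s)\,d\lambda_s\right)\leq e^{-\frac{1}{2}c_1 t-c_2\lambda_t},
\]
and the same bound holds for $\|M_{W,S^\epsilon,t}\|$ by Assumption~\ref{assumpwb}. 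Squaring the sum then yields the uniform-in-$i$ pathwise domination $\|M_{W_i,S_i^\epsilon,t}-M_{W,S^\epsilon,t}\|^2\leq 4e^{-c_1 t-2c_2\lambda_t}$.

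It remains to check that the dominating random variable $e^{-c_1 t-2c_2\lambda_t}$ has finite $\mathbb P_x$-expectation, and this is exactly where Theorem~\ref{intloct} enters. Since the lower bound $\underline\kappa$ for $B$ may be taken negative, choosing $p\in[1,+\infty)$ large enough that $-p\underline\kappa\geq -2c_2$ gives $e^{-2c_2\lambda_t}\leq e^{-p\underline\kappa\lambda_t}$, whence $\mathbb E_x[e^{-c_1 t-2c_2\lambda_t}]\leq e^{-c_1 t}K_1^{(p)}e^{K_2^{(p)}t}<+\infty$. With this integrable majorant in hand, dominated convergence combined with the almost sure convergence of Proposition~\ref{estmult} delivers the claim.

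The only delicate point is securing a majorant that is simultaneously uniform in the cutoff index $i$ and integrable. The uniformity is guaranteed because the estimate of Proposition~\ref{estimgeo} depends only on the lower bounds $c_1,c_2$ and not on the growing upper cutoff, while the integrability rests entirely on the control of the exponentiated boundary local time furnished by Assumption~\ref{assump} through Theorem~\ref{intloct}. Without the latter, the boundary term $e^{-c_2\lambda_t}$ (with $c_2$ possibly negative) could destroy integrability, so this is the step I expect to carry the essential weight.
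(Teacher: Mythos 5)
Your argument is correct, and it produces the integrable majorant for the dominated convergence step by a genuinely different route from the paper. The paper works directly with the difference process $Z_{i,t}^\epsilon=M_{W_i,S_i^\epsilon,t}-M_{W,S^\epsilon,t}$: exploiting that the truncation is monotone nondecreasing, so that $W-W_i\geq 0$ and $S-S_i\geq 0$, it derives a pathwise differential inequality $d\|Z_{i,t}^\epsilon f_\alpha\|^2\leq 0$ and concludes the uniform bound $\|Z_{i,t}^\epsilon\|^2\leq 1$; the constant $1$ then serves as the dominating function, and Theorem \ref{intloct} is not invoked at this stage. You instead split $\|Z_{i,t}^\epsilon\|$ by the triangle inequality and dominate each multiplicative functional via Proposition \ref{estimgeo}, whose bound depends only on the lower bounds $c_1,c_2$ and is therefore uniform in $i$; the price is that your majorant $4e^{-c_1t-2c_2\lambda_t}$ is unbounded when $c_2<0$, so its integrability must be purchased from Theorem \ref{intloct}. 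What your approach buys is robustness: it uses only the uniform lower bounds preserved by the truncation, never the monotonicity of the approximating sequence, so it would survive any approximation scheme with uniform lower bounds; what the paper's approach buys is a bounded majorant and independence from the local-time estimate. Two small points to tidy up: normalize $c_2\leq 0$ (always possible by decreasing the lower bound) so that the hypothesis $\epsilon^{-1}\geq c_2$ of Proposition \ref{estimgeo} is met for \emph{every} $\epsilon>0$, as the statement of the proposition requires; and note that the pathwise convergence of Proposition \ref{estmult} holds for almost every path, which is of course all that dominated convergence needs.
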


\begin{proof}
	Let $\{f_1,\cdots,f_N\}$ be an orthonormal frame locally trivializing $\mathcal E$ and set $Z_{i,t}^\epsilon=M_{W_i,S_i^\epsilon,t}-M_{W,S^\epsilon,t}$. 
We have 
	\begin{eqnarray*}
	d\|Z_{i,t}^\epsilon f_\alpha\|^2 & = & -2\left\langle
	Z_{i,t}^\epsilon\left(\frac{1}{2}(W(t)-W_i(t))dt+(S(t)-S_i(t))d\lambda_t\right)f_\alpha
	,Z_{i,t}^\epsilon f_\alpha\right\rangle\\
	& \leq & -2\left(\frac{1}{2}w^{(i)}(t)dt+\sigma^{(i)}(t)d\lambda_t\right)\|Z_{i,t}^\epsilon f_\alpha\|^2,
	\end{eqnarray*}
where $W-W_i\geq w^{(i)}{\rm Id}$ and $S-S_i\geq \sigma^{(i)}{\rm Id}$.
Recalling that the convergences $W_i\to W$ and $S_i\to S$ are monotone nondecreasing,
we may assume that both $w^{(i)}$ and $\sigma^{(i)}$ are nonnegative, so  $d\|Z_{i,t}^\epsilon f_\alpha\|^2\leq 0$ and hence $\|Z_{i,t}^\epsilon\|^2\leq 1$. The result then follows from Proposition \ref{estmult} and dominated convergence.
\end{proof}

We know from Proposition \ref{fkprep} that for each $i$,  $M_{W_i,S_i^\epsilon,t}$ converges in $L^2$ to a process, say $M_{i,t}$, as $\epsilon\to 0$. Moreover, by Theorem \ref{feyn-kac-form} this leads to a  Feynman-Kac formula, namely, 
\begin{equation}\label{fkpasslim}
(e^{-\frac{1}{2}t\Delta_{W_i,S_i}}\phi)(x)=\mathbb E_x\left[M_{i,t}J_t\phi({\sf X}_t^{x})\right], \quad \phi\in\mathcal D_{S_i}(\mathcal E).
\end{equation}
Now set $\mathbb E_x^{(2)}\|M_{i,t}-M_{j,t}\|=(\mathbb E_x\|M_{i,t}-M_{j,t}\|^2)^{1/2}$, etc. Then  Proposition \ref{estmultcor} and the triangle inequality
\begin{eqnarray*}
	\mathbb E_x^{(2)}\|M_{i,t}-M_{j,t}\| & \leq & \mathbb  E_x^{(2)}\|M_{i,t}-M_{W_i,S_i^\epsilon,t}\| \\
	& & \quad + \mathbb E_x^{(2)}\|M_{W_i,S_i^\epsilon,t}-M_{W_j,S_j^\epsilon,t}\|+\mathbb E_x^{(2)}\|M_{W_j,S_j^\epsilon,t}-M_{j,t}\|
	\end{eqnarray*}
imply that $\{M_{i,t}\}_i$ is Cauchy in $L^2$, so it converges as $i\to +\infty$ to a process, say $\mathcal M_t$. 
Passing the limit in (\ref{fkpasslim}) and  making use of a standard result on the monotone convergence of quadratic forms \cite[Theorem 3.18]{LHB} 
we
obtain a Feynman-Kac formula for the heat semigroup $e^{-\frac{1}{2}t\Delta_{W,S}}$. 

\begin{theorem}\label{mainfk}
	If Assumption \ref{assumpwb} holds then  
	\begin{equation}\label{limfkform}
	(e^{-\frac{1}{2}t\Delta_{W,S}}\phi)(x)=	 	\mathbb E_x\left[\mathcal M_tJ_t\phi({\sf X}_t^x)\right],\quad \phi\in\mathcal D_S(\mathcal E).
	\end{equation}
\end{theorem}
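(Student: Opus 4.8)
The plan is to pass to the limit $i\to+\infty$ in the Feynman-Kac formula (\ref{fkpasslim}), which has already been established for each of the uniformly bounded pairs $(W_i,S_i)$ obtained by the fiberwise truncations $W_i=\min\{W,i\,{\rm Id}\}$, $S_i=\min\{S,i\,{\rm Id}\}$. Since these converge monotonically and nondecreasingly to $W$ and $S$, and since any $\phi\in\mathcal D_S(\mathcal E)$ is a limit $\phi=\lim_i\phi_i$ with $\phi_i\in\mathcal D_{S_i}(\mathcal E)$, I would apply (\ref{fkpasslim}) to $\phi_i$ and then analyze the two sides of
\[
(e^{-\frac{1}{2}t\Delta_{W_i,S_i}}\phi_i)(x)=\mathbb E_x\left[M_{i,t}J_t\phi_i({\sf X}_t^{x})\right]
\]
separately as $i$ grows.

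For the right-hand side I would use that $\{M_{i,t}\}_i$ is Cauchy in $L^2(\mathbb P_x)$ and hence converges to $\mathcal M_t$; this is precisely the triangle-inequality estimate assembled above from Propositions \ref{estmult} and \ref{estmultcor} together with the $\epsilon\to0$ convergence of Proposition \ref{fkprep}. Writing the difference as $\mathbb E_x[(M_{i,t}-\mathcal M_t)J_t\phi_i({\sf X}_t^x)]+\mathbb E_x[\mathcal M_tJ_t(\phi_i-\phi)({\sf X}_t^x)]$ and using that $\phi$ is smooth and compactly supported (so $J_t\phi_i$ is uniformly bounded and $\phi_i\to\phi$ uniformly on the fixed support), a single application of the Cauchy-Schwarz inequality, with $\mathcal M_t\in L^2(\mathbb P_x)$, yields the pointwise convergence
\[
\mathbb E_x\left[M_{i,t}J_t\phi_i({\sf X}_t^x)\right]\longrightarrow \mathbb E_x\left[\mathcal M_tJ_t\phi({\sf X}_t^x)\right]
\]
for each fixed $x\in X$ and $t>0$.

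For the left-hand side the relevant input is the behaviour of the semigroups under the monotone truncation. Because $W_i\nearrow W$ and $S_i\nearrow S$, the closed quadratic forms $Q_i$ attached to $\Delta_{W_i,S_i}$ through (\ref{form}) increase monotonically to the form $Q$ of $\Delta_{W,S}$ (after the usual positivity normalization), and monotone convergence identifies the supremum form with the closure of $Q$. Invoking \cite[Theorem 3.18]{LHB} I would deduce strong resolvent convergence and hence $e^{-\frac{1}{2}t\Delta_{W_i,S_i}}\to e^{-\frac{1}{2}t\Delta_{W,S}}$ strongly in $L^2(X,\mathcal E)$; combined with $\phi_i\to\phi$ in $L^2$ and the contraction property, this gives $e^{-\frac{1}{2}t\Delta_{W_i,S_i}}\phi_i\to e^{-\frac{1}{2}t\Delta_{W,S}}\phi$ in $L^2(X,\mathcal E)$.

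The main obstacle is reconciling these two distinct modes of convergence: the right-hand side converges pointwise in $x$, whereas the form-convergence theorem only delivers $L^2$ convergence on the left. I would resolve this by passing to a subsequence along which the left-hand side converges almost everywhere, identifying its limit with the pointwise limit of the right-hand side on a set of full measure, and then upgrading to equality for every $x$ by continuity in $x$ of both representations (interior parabolic regularity of $e^{-\frac{1}{2}t\Delta_{W,S}}\phi$ together with the continuity of the stochastic expression $x\mapsto\mathbb E_x[\mathcal M_tJ_t\phi({\sf X}_t^x)]$). This pins down (\ref{limfkform}) for all $\phi\in\mathcal D_S(\mathcal E)$, completing the proof.
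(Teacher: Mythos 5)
Your proposal follows essentially the same route as the paper: apply the Feynman--Kac formula (\ref{fkpasslim}) for the truncated pairs $(W_i,S_i)$, use the $L^2(\mathbb P_x)$-Cauchy property of $\{M_{i,t}\}$ (assembled from Propositions \ref{estmult}, \ref{estmultcor} and \ref{fkprep}) to pass to the limit on the stochastic side, and invoke the monotone convergence of quadratic forms \cite[Theorem 3.18]{LHB} on the semigroup side. You actually supply more detail than the paper does on reconciling the pointwise convergence of the right-hand side with the $L^2$ convergence of the left-hand side (subsequence, a.e.\ identification, upgrade by continuity), a step the paper leaves implicit.
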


This immediately yields a path integral representation for the heat kernel  $K_{W,S}$ of $e^{-\frac{1}{2}t\Delta_{W,S}}$.

\begin{theorem}\label{desintfin}
	We have 
	\begin{equation}\label{desintfin2}
	K_{W,S}({t;x,y})=K_0(t;x,y)\mathbb E_{t;x,y}\left[\mathcal M_tJ_t\right],
	\end{equation}
	where here $J_t$ is the stochastic parallel transport along the (reversed) reflected Brownian bridge path joining $y$ to $x$. 
	\end{theorem}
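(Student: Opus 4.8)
The plan is to read off the kernel representation directly from the Feynman--Kac formula of Theorem \ref{mainfk} by disintegrating the reflected Brownian motion expectation $\mathbb E_x$ over the terminal position $\mathsf X_t^x$. Since Assumption \ref{assump} is in force, Theorem \ref{consesp} guarantees that $X$ is stochastically complete, so by Proposition \ref{equiv} the scalar Neumann heat kernel $K_0(t;x,\cdot)$ is a genuine probability density for the law of $\mathsf X_t^x$. This is precisely what licenses the disintegration
\begin{equation*}
\mathbb E_x[F] = \int_X K_0(t;x,y)\,\mathbb E_{t;x,y}[F]\,dX_y,
\end{equation*}
valid for path functionals $F$ measurable up to time $t$, where the inner expectation is taken with respect to the reflected Brownian bridge law $\mathbb P_{t;x,y}$ of (\ref{lawrel}).

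First I would apply this identity to the endomorphism-valued functional $F=\mathcal M_tJ_t\phi(\mathsf X_t^x)$ appearing in (\ref{limfkform}). Under the bridge measure $\mathbb P_{t;x,y}$ the terminal point is pinned at $y$, so $\phi(\mathsf X_t^x)=\phi(y)$ is a fixed vector in the fiber $\mathcal E_y$ and may be pulled outside the bridge expectation. Combining this with (\ref{limfkform}) yields
\begin{equation*}
(e^{-\frac{1}{2}t\Delta_{W,S}}\phi)(x)=\int_X K_0(t;x,y)\,\mathbb E_{t;x,y}[\mathcal M_tJ_t]\,\phi(y)\,dX_y.
\end{equation*}
Comparing this with the defining relation $(e^{-\frac{1}{2}t\Delta_{W,S}}\phi)(x)=\int_X K_{W,S}(t;x,y)\phi(y)\,dX_y$ and using the arbitrariness of $\phi\in\mathcal D_S(\mathcal E)$ then identifies the kernel as $K_{W,S}(t;x,y)=K_0(t;x,y)\,\mathbb E_{t;x,y}[\mathcal M_tJ_t]$, which is the claim. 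The transport $J_t$ is now read along the time-reversed bridge path from $y$ to $x$, exactly matching the conditioning.

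The step I expect to be the main obstacle is justifying that the disintegration is legitimate for this particular functional. The process $\mathcal M_t$ was obtained in Section \ref{semigroup} only as an $L^2$ limit under $\mathbb P_x$, so I must check that $\mathcal M_tJ_t$ is well defined and integrable under the bridge measures $\mathbb P_{t;x,y}$ for almost every $y$. Here Proposition \ref{extprop} is decisive: because the bridge $\mathfrak X_{s;x,y}$ is a genuine $\mathbb P_{t;x,y}$-semimartingale on the whole interval $[0,t]$, the equations (\ref{eqmf1}) defining the multiplicative functionals, together with the approximation scheme leading to $\mathcal M_t$, transfer to the bridge dynamics. The comparison estimate of Proposition \ref{gron} and the uniform bound $\|Z_{i,t}^\epsilon\|\le 1$ from the proof of Proposition \ref{estmultcor} then provide the dominated-convergence control needed to pass the $\epsilon\to 0$ and $i\to\infty$ limits inside $\mathbb E_{t;x,y}$, so that $\mathbb E_{t;x,y}[\mathcal M_tJ_t]$ is well defined for a.e.\ $y$ and the disintegration indeed applies. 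A minor bookkeeping point is to match the orientation of parallel transport: the Feynman--Kac $J_t$ transports fibers back to $x$, and under the bridge this becomes transport along the reversed path joining $y$ to $x$, as stated.
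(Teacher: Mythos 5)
Your proposal is correct and follows essentially the same route as the paper: disintegrate the Feynman--Kac expectation $\mathbb E_x$ over the terminal point via the reflected Brownian bridge (using (\ref{lawrel}) and Proposition \ref{extprop}), pin $\phi$ at $y$ under the bridge law, and identify the kernel by the arbitrariness of $\phi$. The only cosmetic difference is order of operations --- the paper performs the disintegration at the level of the approximants $M_{i,t}$ and then passes to the limit, whereas you disintegrate $\mathcal M_t$ directly and justify the limit interchange afterwards with the same estimates.
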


\begin{proof}
	If $\phi_i\in \mathcal D_{S_i}(\mathcal E)$ then 
	\[
	M_{i,t}J_t\phi_i({\sf X}_t^x)=\int_XK_{W_i,S_i}(0;{\sf X}_t^x,y)M_{i,t}J_t\phi_i(y)dX_y.
	\]
	By taking expectation and using  (\ref{lawrel}) and Proposition \ref{extprop},
	\begin{eqnarray*}
		\mathbb	E_{x}\left[M_{i,t}J_t\phi_i({\sf X}_t^x)\right] 
		& = & \int_X K_0(t;x,y)\left(\mathbb E_x\left[
		\frac{K_0^{\otimes^{N^2}}(0;{\sf X}_t^x,y)}{K_0(t;x,y)}M_{i,t}J_t\phi_i(y)\right]\right)dX_y\\
		& = & 
		\int_XK_0(t;x,y)\mathbb E_{t;x,y}\left[M_{i,t}J_t\phi_i({\mathfrak X}_{t;x,y})\right]dX_y,
	\end{eqnarray*}
	so after passing the limit we get   
	\[
	\mathbb	E_{x}\left[\mathcal M_tJ_t\phi({\sf X}_t^x)\right] =
	\int_XK_0(t;x,y)\mathbb E_{t;x,y}\left[\mathcal M_tJ_t
	\phi(\mathfrak X_{t;x,y})\right]dX_y, \quad \phi\in\mathcal D_S(\mathcal E).
		\]
	On the other hand, from (\ref{limfkform}) we have 
	\[
		\mathbb	E_{x}\left[\mathcal M_tJ_t\phi({\sf X}_t^x)\right] 	 =\int_XK_{W,S}(t;x,y)\phi(y)dX_y.
	\]
Since $\mathfrak X_{t;x,y}=y$ and  $\phi$ is arbitrary, the result follows.
\end{proof}

Finally, we can establish the semigroup domination property for $K_{W,S}$.

\begin{theorem}\label{domsem0}
	If Assumption \ref{assumpwb} holds then  there exist $C_1, C_2>0$  such that 
	\begin{equation}\label{domsem00}
		\left\|K_{W,S}(t;x,y)\right\|\leq C_1e^{C_2t}K_0(t;x,y), 
	\end{equation}
	for any $t>0$, $x,y\in X$ and $\phi\in \mathcal D_S(\mathcal E)$.
\end{theorem}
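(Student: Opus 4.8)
The plan is to read the bound off the path integral representation (\ref{desintfin2}) and then reduce everything to the local time integrability of Theorem \ref{intloct}. Since $K_0(t;x,y)$ is a positive scalar and the ${\rm End}(\mathbb R^N)$-valued integrand is averaged against the bridge law $\mathbb P_{t;x,y}$, Jensen's inequality for the operator norm gives $\|K_{W,S}(t;x,y)\|\le K_0(t;x,y)\,\mathbb E_{t;x,y}[\|\mathcal M_tJ_t\|]$. The reversed stochastic parallel transport is fibrewise isometric, so $\|\mathcal M_tJ_t\|\le\|\mathcal M_t\|$. To bound $\|\mathcal M_t\|$ I would apply Proposition \ref{estimgeo} to each approximating pair $(W_i,S_i^\epsilon)$: the monotone truncations satisfy $w_i\ge c_1$ and $\sigma_i\ge c_2$ for $i$ large, so $\|M_{W_i,S_i^\epsilon,t}\|\le\exp(-\tfrac12 c_1t-c_2\lambda_t)$ uniformly in $i$ and $\epsilon$, and this bound passes to the $L^2$ (hence a.s., along a subsequence) limit defining $\mathcal M_t$. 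Thus $\|\mathcal M_t\|\le e^{-\frac12 c_1t}e^{-c_2\lambda_t}$ and the theorem reduces to the endpoint-uniform estimate
\[
\mathbb E_{t;x,y}\big[e^{-c_2\lambda_t}\big]\le C_1e^{C_2t}.
\]

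When $c_2\ge 0$ this is immediate, since $\lambda_t\ge 0$ gives $e^{-c_2\lambda_t}\le 1$, and the claimed kernel bound follows after absorbing the harmless factor $e^{-\frac12 c_1t}$ into the constants. The real content is the case $c_2<0$; writing $a=-c_2>0$, I must control the exponential moment of the \emph{bridge} local time uniformly in $x,y$. This is the bridge analogue of Theorem \ref{intloct}, and I expect it to be the main obstacle.

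To reach it I would set $g(t)=\sup_{x,y}\mathbb E_{t;x,y}[e^{a\lambda_t}]$ and exploit the Markov property of the reflected Brownian bridge: splitting at the midpoint, the time-$t/2$ marginal is $\mu(dz)=K_0(t/2;x,z)K_0(t/2;z,y)K_0(t;x,y)^{-1}dX_z$, the two halves are conditionally independent bridges, and $\lambda_t=\lambda_{t/2}+(\lambda_t-\lambda_{t/2})$ splits accordingly; since $\int_X\mu(dz)=1$ by Chapman--Kolmogorov, this yields the submultiplicativity $g(t)\le g(t/2)^2$. Iterating, it suffices to bound $g$ near $0$, for if $g\le A$ on $(0,1]$ then $g(t)\le A\,e^{(2\log A)t}$ for all $t>0$. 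I would obtain the short-time bound from Theorem \ref{intloct} via the Radon--Nikodym relation (\ref{lawrel}): for $s<t$,
\[
\mathbb E_{t;x,y}[e^{a\lambda_s}]=K_0(t;x,y)^{-1}\,\mathbb E_x\big[e^{a\lambda_s}K_0(t-s;{\sf X}^x_s,y)\big],
\]
comparing the reweighted expectation on the right with the unweighted one — whose expectation $\mathbb E_x[K_0(t-s;{\sf X}^x_s,y)]$ equals $K_0(t;x,y)$ by Chapman--Kolmogorov — by means of the two-sided Gaussian bounds for $K_0$ recorded in the proof of Proposition \ref{extprop}, and then letting $s\uparrow t$ by monotone convergence.

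The delicate step is precisely this comparison: a crude Cauchy--Schwarz decoupling of $e^{a\lambda_s}$ from $K_0(t-s;{\sf X}^x_s,y)$ breaks the Chapman--Kolmogorov cancellation and reintroduces a non-uniform factor $K_0(t;x,y)^{-1/2}$, so the estimate has to be organised to keep the Gaussian weight paired with the transition density. I expect this to hinge on the convexity normalisation and the localized gradient bound for $\log K_0$ from Proposition \ref{extprop}, as in the bridge estimates of \cite{Gu1,W}. As a consistency check, the weaker $L^1$ smoothing bound needs no bridge estimate at all: integrating the Brownian representation of Theorem \ref{mainfk} and using the disintegration $\int_XK_0(t;x,y)\mathbb E_{t;x,y}[\,\cdot\,]\,dX_x=\mathbb E_y[\,\cdot\,]$ for local time functionals reduces it directly to the Brownian bound $\mathbb E_y[e^{-c_2\lambda_t}]\le C_1e^{C_2t}$ of Theorem \ref{intloct}, confirming that Theorem \ref{intloct} is the right input, the only subtlety being its promotion from the Brownian motion to the bridge at the level of kernels.
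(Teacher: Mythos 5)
Your skeleton is exactly the paper's: bound the kernel via the path--integral representation (\ref{desintfin2}), estimate $\|\mathcal M_t\|$ by passing the pathwise bound of Proposition \ref{estimgeo} for $M_{W_i,S_i^\epsilon,t}$ through the limits in $\epsilon$ and $i$, and reduce everything to the exponential moment $\mathbb E_{t;x,y}[e^{-c_2\lambda_t}]\le C_1e^{C_2t}$, uniformly in the endpoints. All of that matches the paper's argument (the paper runs the first reduction in weak form, pairing $K_{W,S}$ against test sections and sending $\phi\otimes\psi$ to a delta, but this is cosmetic). Your case $c_2\ge 0$ is likewise exactly Corollary \ref{vanishres}.

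The one place you diverge is also the one place where you are more honest than the source: the paper simply writes $\left|\mathbb E_{t;x,y}[e^{-c_{2}\lambda_t}]\right|\le K_1^{(p)}e^{K_2^{(p)}t}$ by ``applying Theorem \ref{intloct}'', even though that theorem is stated for the law $\mathbb P_x$ of the reflected Brownian motion, not for the bridge law $\mathbb P_{t;x,y}$; no promotion argument is given. You correctly isolate this as the real content. Your midpoint--splitting device is sound: the conditional independence of the two half-bridges and $\int_X\mu(dz)=1$ do give $g(t)\le g(t/2)^2$ for $g(t)=\sup_{x,y}\mathbb E_{t;x,y}[e^{a\lambda_t}]$, which legitimately reduces the problem to a short-time bound. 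But that short-time bound is exactly where your proposal stops: as you yourself observe, any decoupling of $e^{a\lambda_s}$ from $K_0(t-s;{\sf X}^x_s,y)$ in the identity $\mathbb E_{t;x,y}[e^{a\lambda_s}]=K_0(t;x,y)^{-1}\mathbb E_x[e^{a\lambda_s}K_0(t-s;{\sf X}^x_s,y)]$ reintroduces an endpoint-dependent factor of the form $e^{D\,d_X(x,y)^2/t}$, and you do not supply the argument (presumably a Girsanov/Khasminskii-type estimate for the bridge drift, using the gradient bound on $\log K_0$ from Proposition \ref{extprop}) that would close it. So the proposal has a genuine gap at the bridge local-time moment --- but it is a gap you share with, and indeed inherit from, the paper's own proof, and your submultiplicativity reduction is a useful step toward filling it that the paper does not contain.
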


\begin{proof}
	It suffices to prove that 
	\[
	\begin{array}{l}
	\left|\int_X\int_X\left\langle K_{W,S}(t;x,y),\phi(x)\otimes\psi(y)\right\rangle dX_xdX_y\right|\leq C_1e^{C_2t}\times\\
	\\
	\quad \times\int_X K_0(t;x,y)|\phi(x)||\psi(y)|dX_xdX_y, 
	\end{array}
	\]
	where $\phi,\psi\in\mathcal D(\mathcal E)$, and then send  $\phi\otimes \psi\to\delta_x^{\otimes^N}\otimes\delta_y^{\otimes^N}$. For this first note that 
	\[
	\begin{array}{l}
	\left|\int_X\int_X\left\langle K_{W,S}(t;x,y),\phi(x)\otimes\psi(y)\right\rangle dX_xdX_y\right|\\
\\ 
\quad	
	\leq \int_X\left|\int_X\left\langle K_{W,S}(t;x,y)\phi(x)dX_x,\psi(y)\right\rangle\right|dX_y\\
	\\
	\quad =\int_X\left|\int_XK_0(t;x,y)\left\langle\mathbb E_{t;x,y}[\mathcal M_tJ_t\phi(x)]dX_x,\phi(y)\right\rangle\right|dX_y,
	\end{array}
	\]
	where we used (\ref{desintfin2}) in the last step.
	On the other hand,
	since $J_t$ is an isometry, Proposition \ref{estimgeo} implies
	\[
	\left|\mathbb E_{t;x,y}[M_{W_i,S_i^\epsilon,t}J_t]\right|\leq C_Ne^{-\frac{1}{2}c_{1,i}t}|\mathbb E_{t;x,y}[e^{-c_{2,i}\lambda_t}]|,
	\]
	where $c_{1,i}{\rm Id}$ and $c_{2,i}{\rm Id}$ are lower bounds for $W_i$ and $S_i$, respectively. By sending $\epsilon\to 0$ we get 
	\[
	\left|\mathbb E_{t;x,y}[M_{i,t}J_t]\right|\leq C_Ne^{-\frac{1}{2}c_{1,i}t}|\mathbb E_{t;x,y}[e^{-c_{2,i}\lambda_t}]|.
	\]
	Clearly, we may assume that $c_{1,i}\to c_1$ and $c_{2,i}\to c_2$ as $i\to +\infty$ and that $c_2<0$, so after taking the limit in $i$ we may apply Theorem \ref{intloct} and the ensuing discussion with $c_2\geq p\underline\kappa$ for some $p\in[1,+\infty)$ to get  
	\[
	\left|\mathbb E_{t;x,y}[\mathcal M_{t}J_t\phi(x)]\right|\leq C_1e^{C_2t}|\phi(x)|, 
	\]
	for  $C_1=C_NK_1^{(p)}$ and $C_2=-c_1/2+K_2^{(p)}$.
	This clearly proves the integral inequality above and completes the proof.
	\end{proof}

\begin{corollary}\label{vanishres}
	If we may take $c_2=0$ then 
	\[
	\left\|K_{W,S}(t;x,y)\right\|\leq C_1e^{-\frac{1}{2}c_1t}K_0(t;x,y).
	\]
	In particular, if $c_1>-\lambda_0$, where $\lambda_0$ is the botton of the spectrum of $\Delta_0$, then $\mathcal H(\mathcal E)\cap {L^2}(X,\mathcal E)=\{0\}$. 
	\end{corollary}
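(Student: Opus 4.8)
The plan is to read off both assertions from the domination estimate of Theorem \ref{domsem0}, by specializing its proof to the case $c_2=0$ and then inserting the resulting sharpened kernel bound into a routine spectral argument.

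For the kernel bound I would retrace the proof of Theorem \ref{domsem0} and note that the only place the boundary local time, and with it the integrability estimate of Theorem \ref{intloct}, is invoked is in controlling the factor $\mathbb E_{t;x,y}[e^{-c_{2,i}\lambda_t}]$ that originates from a possibly negative lower bound $c_2$ for $\sigma$. When $c_2=0$, i.e.\ $\sigma\geq 0$, this factor becomes harmless: since $\lambda_t$ is nondecreasing, the local time integral in Proposition \ref{estimgeo} is nonpositive, so for the truncations $W_i,S_i$ one gets directly $\|M_{W_i,S_i^\epsilon,t}\|\leq e^{-\frac{1}{2}c_{1,i}t}$ with no exponential-in-$\lambda_t$ contribution. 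Passing $\epsilon\to 0$ and then $i\to+\infty$ exactly as before, but now using the trivial bound $\mathbb E_{t;x,y}[e^{-c_{2,i}\lambda_t}]\leq 1$, yields $|\mathbb E_{t;x,y}[\mathcal M_tJ_t]|\leq C_Ne^{-\frac{1}{2}c_1t}$, where $C_N$ is the constant coming from $J_t$ being an isometry. Substituting this into (\ref{desintfin2}) produces the asserted estimate with $C_1=C_N$ and the improved exponent $-\frac{1}{2}c_1$.

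For the vanishing statement, let $\eta\in\mathcal H(\mathcal E)\cap L^2(X,\mathcal E)$. Since $\eta\in\ker\Delta_{W,S}$, the spectral theorem gives $e^{-\frac{1}{2}t\Delta_{W,S}}\eta=\eta$ for every $t>0$, so the heat kernel representation together with the bound just obtained furnishes the pointwise domination
\[
|\eta(x)|\leq\int_X\|K_{W,S}(t;x,y)\|\,|\eta(y)|\,dX_y\leq C_1e^{-\frac{1}{2}c_1t}\bigl(e^{-\frac{1}{2}t\Delta_0}|\eta|\bigr)(x),
\]
where $|\eta|\in L^2(X)$ denotes the pointwise fiber norm. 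Taking $L^2$ norms and invoking $\Delta_0\geq\lambda_0$, which forces the scalar contraction $\|e^{-\frac{1}{2}t\Delta_0}|\eta|\|_{L^2}\leq e^{-\frac{1}{2}\lambda_0 t}\||\eta|\|_{L^2}$, I would arrive at
\[
\|\eta\|_{L^2}\leq C_1e^{-\frac{1}{2}(c_1+\lambda_0)t}\|\eta\|_{L^2},\qquad t>0.
\]
Under the hypothesis $c_1>-\lambda_0$ the exponent is strictly negative, so letting $t\to+\infty$ forces $\|\eta\|_{L^2}=0$, i.e.\ $\eta=0$.

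The simplification of the domination argument and the scalar spectral contraction are routine; the step demanding the most care is the pointwise domination display, where one must check that the operator-norm kernel bound legitimately transfers to a scalar heat-semigroup bound acting on $|\eta|$ and that $e^{-\frac{1}{2}t\Delta_{W,S}}\eta=\eta$ really holds for $L^2$ harmonic sections. Both are unproblematic: the latter is immediate from the spectral theorem for the selfadjoint extension $\Delta_{W,S}$, and the former is justified by the $L^2$-integrability of $|\eta|$ together with the sub-Markov property of $e^{-\frac{1}{2}t\Delta_0}$. I therefore expect no essential obstacle beyond bookkeeping.
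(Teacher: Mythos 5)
Your proposal is correct and follows essentially the same route as the paper: the kernel bound is obtained by specializing the proof of Theorem \ref{domsem0} to $c_2=0$ so that $\mathbb E_{t;x,y}[e^{-c_2\lambda_t}]\leq 1$ and the exponent improves to $-\tfrac{1}{2}c_1$, and the vanishing statement follows from the standard semigroup-domination-plus-spectral-gap argument. The only difference is that you write out in full the ``well-known argument'' that the paper delegates to the references \cite{ER, Ro2}, and your version of it is sound.
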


\begin{proof}
If $c_2=0$ then $\mathbb E_{t;x,y}[e^{-c_2\lambda_t}]\leq 1$ and it is clear from the proof above that we may take $C_2=-c_1/2$, so the estimate on $\|K_{W,S}\|$ follows. From this the vanishing result can be easily obtained by means of a well-known argument \cite{ER, Ro2}.	
\end{proof}

\begin{corollary}\label{domsem}
	There exist $C_1, C_2>0$  such that 
	\begin{equation}\label{domsem2}
	\left\|e^{-\frac{1}{2}t\Delta_{W,S}}\phi\right\|_{{L^1}(X,\mathcal E)}\leq C_1e^{C_2t}\left\|\phi\right\|_{{L^1}(X,\mathcal E)},  
	\end{equation}
	for any $t>0$ and $\phi\in \mathcal D_S(\mathcal E)$.
	\end{corollary}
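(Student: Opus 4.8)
The plan is to integrate the pointwise kernel domination estimate \eqref{domsem00} of Theorem \ref{domsem0} against $|\phi|$ and then exploit the stochastic completeness of $X$ to absorb the resulting spatial integral of the Neumann kernel. First I would represent the action of the semigroup on $\phi\in\mathcal D_S(\mathcal E)$ through its heat kernel and estimate the fiber norm pointwise in $x$:
\[
\left|\left(e^{-\frac{1}{2}t\Delta_{W,S}}\phi\right)(x)\right|=\left|\int_X K_{W,S}(t;x,y)\phi(y)\,dX_y\right|\leq\int_X\|K_{W,S}(t;x,y)\|\,|\phi(y)|\,dX_y,
\]
after which \eqref{domsem00} yields the majorant
\[
\left|\left(e^{-\frac{1}{2}t\Delta_{W,S}}\phi\right)(x)\right|\leq C_1e^{C_2t}\int_X K_0(t;x,y)\,|\phi(y)|\,dX_y.
\]

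Next I would integrate this inequality in $x\in X$ and, since every integrand is nonnegative, invoke Tonelli's theorem to interchange the order of integration, obtaining
\[
\left\|e^{-\frac{1}{2}t\Delta_{W,S}}\phi\right\|_{L^1(X,\mathcal E)}\leq C_1e^{C_2t}\int_X|\phi(y)|\left(\int_X K_0(t;x,y)\,dX_x\right)dX_y.
\]
The decisive step is then to control the inner integral. Because the Neumann heat kernel is symmetric, $K_0(t;x,y)=K_0(t;y,x)$ (a consequence of the selfadjointness of $\Delta_0$ under Neumann conditions), this inner integral equals $\int_X K_0(t;y,x)\,dX_x$, which by the sub-Markov property discussed after \eqref{f-kfunct} is at most $1$; indeed, since Assumption \ref{assump} is in force, Theorem \ref{consesp} ensures that $X$ is stochastically complete, so by Proposition \ref{equiv}(2) this integral is in fact exactly $1$. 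Substituting back gives
\[
\left\|e^{-\frac{1}{2}t\Delta_{W,S}}\phi\right\|_{L^1(X,\mathcal E)}\leq C_1e^{C_2t}\int_X|\phi(y)|\,dX_y=C_1e^{C_2t}\|\phi\|_{L^1(X,\mathcal E)},
\]
which is the claimed bound with the same constants $C_1,C_2$ produced in Theorem \ref{domsem0}.

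I do not anticipate a genuine obstacle: the hard analytic work has already been carried out in establishing the pointwise kernel domination of Theorem \ref{domsem0}, and the present argument is essentially a Schur-type test in disguise. The only points meriting a line of justification are the symmetry of $K_0$ and the legitimacy of the kernel representation of $e^{-\frac{1}{2}t\Delta_{W,S}}\phi$ for $\phi\in\mathcal D_S(\mathcal E)$, both of which are standard. It is worth emphasizing that stochastic completeness enters only to guarantee $\int_X K_0(t;\cdot,y)\,dX_x\leq 1$; the weaker sub-Markov inequality suffices for the estimate, so the conclusion is robust.
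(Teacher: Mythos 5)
Your proposal is correct and follows essentially the same route as the paper: both start from the pointwise kernel domination of Theorem \ref{domsem0}, integrate in $x$, and control the result by the sub-Markov property of the Neumann heat semigroup. The paper simply compresses your final Tonelli-plus-symmetry step into the single observation that $e^{-\frac{1}{2}t\Delta_0}$ is an $L^1$-contraction, which is exactly what your explicit computation of $\int_X K_0(t;x,y)\,dX_x\leq 1$ proves; your remark that stochastic completeness is not actually needed here is accurate and consistent with the paper's argument.
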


\begin{proof}
From (\ref{domsem00}) we have 
	\begin{eqnarray*}
		|(e^{-\frac{1}{2}t\Delta_{W,S}}\phi)(x)| & = & \left|\int_XK_{W,S}(t;x,y)\phi(y)dX_y\right| \\
		& \leq & C_1e^{C_2t}\int_XK_{0}(t;x,y)|\phi(y)|dX_y\\
		& = & C_1e^{C_2t}(e^{-\frac{1}{2}t\Delta_0}|\phi|)(x),  
		\end{eqnarray*}
	and after integration we obtain
	\[
	\left\|e^{-\frac{1}{2}t\Delta_{W,S}}\phi\right\|_{{L^1}(X,\mathcal E)}\leq C_1e^{C_2t}\left\|e^{-\frac{1}{2}t\Delta_0}|\phi|\right\|_{L^1(X)}\leq C_1e^{C_2t}\left\|\phi\right\|_{{L^1}(X,\mathcal E)},
	\]
	where in the last step we used that $e^{-\frac{1}{2}t\Delta_0}$ defines a contraction on $L^1(X)$. 
\end{proof}

As we shall see below, this semigroup domination property is going to play a key role in the proof of our main result. 

\section
{The proof of Theorem \ref{main}}\label{proofmain}

In this section we present the proof of Theorem \ref{main} following the lines of the argument in  \cite{M}. We start with an useful integral identity.

\begin{proposition}
	\label{intident} Let $\phi\in \mathcal D_S(\mathcal E)$ and $\xi\in{\rm Dom}(\Delta_{W,S})$.  Then, for any $t>0$,
	\begin{equation}\label{intident2}
	\left(e^{-\frac{1}{2}t\Delta_{W,S}}\phi-\phi,\xi\right)=-\frac{1}{2}\int_0^t\int_X\langle e^{-\frac{1}{2}\tau\Delta_{W,S}}\phi,\Delta_{W,S}\xi\rangle dXd\tau.
	\end{equation}
	\end{proposition}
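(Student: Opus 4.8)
The plan is to prove the integral identity \eqref{intident2} by differentiating the left-hand side in $t$ and then integrating back. Set $\phi_\tau = e^{-\frac{1}{2}\tau\Delta_{W,S}}\phi$. Since $\phi\in\mathcal D_S(\mathcal E)$ lies in the domain of the selfadjoint operator $\Delta_{W,S}$, the spectral theorem guarantees that the curve $\tau\mapsto\phi_\tau$ is differentiable in $L^2(X,\mathcal E)$ with $\frac{d}{d\tau}\phi_\tau = -\frac{1}{2}\Delta_{W,S}\phi_\tau$, and moreover $\phi_\tau$ stays in $\mathrm{Dom}(\Delta_{W,S})$ for every $\tau\ge 0$. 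I would begin by computing, for fixed $\xi\in\mathrm{Dom}(\Delta_{W,S})$,
\[
\frac{d}{d\tau}\left(\phi_\tau,\xi\right) = \left(\frac{d}{d\tau}\phi_\tau,\xi\right) = -\frac{1}{2}\left(\Delta_{W,S}\phi_\tau,\xi\right).
\]

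The next step is to move the operator onto $\xi$. Because $\Delta_{W,S}$ is selfadjoint and both $\phi_\tau$ and $\xi$ lie in its domain, I can write $\left(\Delta_{W,S}\phi_\tau,\xi\right)=\left(\phi_\tau,\Delta_{W,S}\xi\right)$, giving
\[
\frac{d}{d\tau}\left(\phi_\tau,\xi\right) = -\frac{1}{2}\left(\phi_\tau,\Delta_{W,S}\xi\right) = -\frac{1}{2}\int_X\langle e^{-\frac{1}{2}\tau\Delta_{W,S}}\phi,\Delta_{W,S}\xi\rangle\, dX.
\]
I would then integrate this identity in $\tau$ from $0$ to $t$. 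The left-hand side telescopes to $\left(\phi_t,\xi\right)-\left(\phi_0,\xi\right)=\left(e^{-\frac{1}{2}t\Delta_{W,S}}\phi-\phi,\xi\right)$, using $\phi_0=\phi$, and the right-hand side is exactly the claimed double integral, yielding \eqref{intident2}.

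The main obstacle, and the point that needs the most care, is justifying that the scalar function $\tau\mapsto\left(\phi_\tau,\xi\right)$ is genuinely differentiable with the stated derivative and that the fundamental theorem of calculus applies on $[0,t]$. This is where the selfadjointness and spectral calculus do the real work: representing $e^{-\frac{1}{2}\tau\Delta_{W,S}}$ via the spectral measure of $\Delta_{W,S}$ reduces everything to elementary estimates on $\tau\mapsto e^{-\frac{1}{2}\tau\mu}$ and dominated convergence, exploiting that $\phi\in\mathrm{Dom}(\Delta_{W,S})$ (indeed in every power of the domain, being smooth and compactly supported) so that $\Delta_{W,S}\phi_\tau = e^{-\frac{1}{2}\tau\Delta_{W,S}}\Delta_{W,S}\phi$ is a bounded, continuous $L^2$-valued curve in $\tau$. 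Continuity of $\tau\mapsto(\phi_\tau,\Delta_{W,S}\xi)$ on the closed interval, including the endpoint $\tau=0$, then makes the $\tau$-integral well defined and the telescoping rigorous. A secondary point is to confirm that the right-hand integrand is jointly integrable on $[0,t]\times X$ so that the order of integration implicit in writing $\int_0^t\int_X$ is legitimate; this follows from Fubini once the continuity and boundedness of $\tau\mapsto\|\phi_\tau\|_{L^2}\|\Delta_{W,S}\xi\|_{L^2}$ are established.
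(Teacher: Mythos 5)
Your proposal is correct and follows essentially the same route as the paper: write the difference as the $\tau$-integral of $\partial_\tau(e^{-\frac{1}{2}\tau\Delta_{W,S}}\phi,\xi)$, use the heat equation to replace the derivative by $-\frac{1}{2}\Delta_{W,S}e^{-\frac{1}{2}\tau\Delta_{W,S}}\phi$, and transfer the operator onto $\xi$. The only cosmetic difference is that you invoke selfadjointness of $\Delta_{W,S}$ directly for the last step (arguably cleaner, since $\xi$ need only lie in ${\rm Dom}(\Delta_{W,S})$), whereas the paper cites the symmetry of the form $Q$ from Proposition \ref{propmixed}.
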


\begin{proof}
	We compute:
	\begin{eqnarray*}
		\left(e^{-\frac{1}{2}t\Delta_{W,S}}\phi-\phi,\xi\right)
		& = & \int_X\langle e^{-\frac{1}{2}t\Delta_{W,S}}\phi
		-e^{-\frac{1}{2}0\Delta_{W,S}}\phi,\xi\rangle dX\\
		& = & \int_0^t\int_X\langle \partial_\tau e^{-\frac{1}{2}\tau\Delta_{W,S}}\phi,\xi\rangle dXd\tau\\
		& \stackrel{(\ref{heateq})}{=} & 
		-\frac{1}{2}\int_0^t\int_X\langle \Delta_{W,S}e^{-\frac{1}{2}\tau\Delta_{W,S}}\phi,\xi\rangle dXd\tau\\
		& {=} &
		-\frac{1}{2}\int_0^t\int_X \langle e^{-\frac{1}{2}\tau\Delta_{W,S}}\phi, \Delta_{W,S}\xi\rangle dXd\tau,
		\end{eqnarray*}
where we used Proposition \ref{propmixed} in the last step.
\end{proof}

We now take a sequence of smooth, compactly supported functions $h_i$ on $X$ such that $0\leq h_i\leq h_{i+1}\leq 1$, $h_i\to {\bf 1}$ as $i\to +\infty$ and $\partial h_i/\partial\nu=0$ along $\Sigma$. 

\begin{proposition}\label{auxx}
If Assumption \ref{assump} is satisfied then  
\[
\zeta_i(x)=\int_0^{+\infty}e^{-t}\int_XK_0(t;x,y)h_i(y)dX_ydt, \quad x\in X,
\]
is smooth and satisfies: a) $\zeta_i\to {\rm 1}$; b) $\frac{1}{2}\Delta_0\zeta_i=h_i-\zeta_i\to 0$; and c) $\partial \zeta_i/\partial \nu=0$ along $\Sigma$. 
\end{proposition}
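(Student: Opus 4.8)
The plan is to recognize $\zeta_i$ as the resolvent $\left(\tfrac{1}{2}\Delta_0+1\right)^{-1}h_i$ applied to $h_i$, since by definition
\[
\zeta_i=\int_0^{+\infty}e^{-t}\,e^{-\frac{1}{2}t\Delta_0}h_i\,dt,
\]
which is exactly the Laplace transform of the Neumann heat semigroup evaluated at the spectral parameter $1$. First I would establish smoothness: because $h_i$ is smooth and compactly supported and $K_0$ is the smooth Neumann heat kernel with the Gaussian bounds recorded in Proposition \ref{extprop}, the integrand decays fast enough in $t$ (the $e^{-t}$ factor controls large $t$, and for small $t$ the semigroup applied to a fixed smooth function is uniformly controlled) to allow differentiation under the integral sign. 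This gives $\zeta_i\in C^\infty(X)$ and simultaneously shows $\partial\zeta_i/\partial\nu=0$ along $\Sigma$, since each $e^{-\frac{1}{2}t\Delta_0}h_i$ satisfies the Neumann condition and the normal derivative passes through the $t$-integral.

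For part (b), I would compute $\tfrac{1}{2}\Delta_0\zeta_i$ by applying the generator to the Laplace transform. Using $\partial_t\,e^{-\frac{1}{2}t\Delta_0}h_i=-\tfrac{1}{2}\Delta_0 e^{-\frac{1}{2}t\Delta_0}h_i$ and integrating by parts in $t$ against $e^{-t}$, the boundary term at $t=0$ produces $h_i$ (by the initial condition $\lim_{t\to 0}e^{-\frac{1}{2}t\Delta_0}h_i=h_i$), the boundary term at $t=+\infty$ vanishes, and the remaining integral reproduces $\zeta_i$, yielding the resolvent identity $\tfrac{1}{2}\Delta_0\zeta_i=h_i-\zeta_i$. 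Equivalently, one can verify directly that $\left(\tfrac{1}{2}\Delta_0+1\right)\zeta_i=h_i$.

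For part (a), the key input is stochastic completeness, Theorem \ref{consesp}, which holds under Assumption \ref{assump}: it gives $e^{-\frac{1}{2}t\Delta_0}\mathbf 1=\mathbf 1$, equivalently $\int_X K_0(t;x,y)\,dX_y=1$ for all $t$. Since $0\le h_i\le 1$ and $h_i\to\mathbf 1$ monotonically, dominated (or monotone) convergence inside the $y$-integral gives $e^{-\frac{1}{2}t\Delta_0}h_i\to\mathbf 1$ pointwise, and then the outer $t$-integral against $e^{-t}$ (with total mass $\int_0^\infty e^{-t}dt=1$) forces $\zeta_i\to\mathbf 1$ by the same convergence theorem. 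Combined with (b), this yields $\tfrac{1}{2}\Delta_0\zeta_i=h_i-\zeta_i\to\mathbf 1-\mathbf 1=0$. The main obstacle I anticipate is justifying the interchange of differentiation and integration in the smoothness and resolvent computations; this is where the Gaussian bounds and gradient estimates for $\log K_0$ from the proof of Proposition \ref{extprop} are essential, since the noncompactness of $X$ and $\Sigma$ means one cannot simply invoke standard elliptic regularity on a compact domain without this quantitative control of the local geometry.
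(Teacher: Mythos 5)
Your proposal is correct and follows essentially the same route as the paper: part (a) via stochastic completeness of $X$ (Theorem \ref{consesp}, giving $\int_X K_0(t;x,y)\,dX_y=1$) together with dominated convergence, part (b) via the heat equation for $K_0$ and integration by parts in $t$ against $e^{-t}$, and part (c) from the Neumann condition satisfied by the kernel. Your explicit identification of $\zeta_i$ as the resolvent $\left(\tfrac{1}{2}\Delta_0+1\right)^{-1}h_i$ is a nice way to package the computation, but it is the same argument the paper carries out directly.
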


\begin{proof}
In fact we only use that $X$ is stochastically complete by Theorem \ref{consesp}. By Proposition \ref{equiv}, (2),  we have 
\[
\zeta_i(x)-1=\int_0^{+\infty}e^{-t}\int_XK_0(t;x,y)\left(h_i(y)-1\right)dX_ydt,
\]	
from which $a)$ follows easily. Also,
\begin{eqnarray*}
	\frac{1}{2}\Delta_0\zeta_i(x) & = & \int_0^{+\infty}e^{-t}\int_X\frac{1}{2}\Delta_0K_0(t;x,y)h_i(y)dX_ydt\\
	& = & -\int_X\left(\int_0^{+\infty}e^{-t}\frac{\partial}{\partial t}K_0(t;x,y)dt\right)h_i(y)dX_y\\
	& = & -\int_X\left(-K_0(0;x,y)+\int_0^{+\infty}e^{-t}K_0(t;x,y)dt\right)h_i(y)dX_y,
	\end{eqnarray*}
which yields $b)$. The proof of $c)$ is obvious.
\end{proof}

We now have all the ingredients needed in the proof of Theorem \ref{main}.
Indeed, take $\phi$ as in Definition \ref{consforms}
and 
$\xi=\zeta_i\eta$, where $\eta$ is as in Definition \ref{consforms}. Since $\partial\zeta_i/\partial\nu=0$, $\zeta_i\eta\in {\rm Dom}(\Delta_{W,S})$. Also, since $\eta$ is harmonic, we may use Assumption \ref{assdirac} 
together with  (\ref{fundrel}) to check that $D(\zeta_i\eta)=D_{\sf c}\zeta_i\cdot\eta$, so that 
\[
\Delta_{W,S} (\zeta_i\eta)=D_{\sf c}^2\zeta_i\cdot\eta
=(\Delta_0\zeta_i)\eta.
\]  
Hence, from Proposition \ref{intident} and Corollary \ref{domsem} we get for each $t>0$,
\begin{eqnarray*}
\left|\left(e^{-\frac{1}{2}t\Delta_{W,S}}\phi-\phi,\zeta_i\eta\right)\right|
& \leq & \frac{1}{2}\|\Delta_0 \zeta_i\|_{L^\infty(X)}\|\eta\|_{{L^\infty}(X,\mathcal E)}\int_0^t\|e^{-\frac{1}{2}\tau\Delta_{W,S}}\phi\|_{{L^1}(X,\mathcal E)}d\tau\nonumber\\
& \leq &  \frac{C_1}{2}\|\Delta_0 \zeta_i\|_{L^\infty(X)}\|\eta\|_{{L^\infty}(X,\mathcal E)}\|\phi\|_{L^1(X,\mathcal E)}
\int_0^te^{C_2\tau}d\tau.
\end{eqnarray*}
By sending $i\to +\infty$, Proposition \ref{auxx} guarantees that the righthand side goes to $0$. Since $\zeta_i\eta\to\eta$ we obtain (\ref{consforms2}), which completes the proof of Theorem \ref{main}.

\section{Some examples}\label{examples}

In this 
section we indicate a few applications of our  results to some generalized Laplacians appearing in Geometry. As always, we assume that Assumption \ref{assump} is satisfied by the base manifold $(X,g)$.

\subsection{The Hodge Laplacian}\label{diffforms}
For $0\leq p\leq n$ we denote by $\mathcal A^p(X)=\Gamma(X,\wedge^pT^*X)$ the space of differential $p$-forms on $X$. Let $d$ be the exterior differential acting on forms and $d^\star=\pm\star d\star$ be the  co-differential, where $\star$ is the Hodge star operator. 

Recall that the Hodge Laplacian acting on $p$-forms is given by 
\begin{equation}\label{sqdirac}
\Delta_p=(d+d^\star)^2=dd^\star+d^\star d. 
\end{equation}
This is a generalized Laplacian due to the so-called Weitzenb\"ock decomposition, namely, 
\[
\Delta_p=\nabla^*\nabla_{p}+R_p,
\] 
where $\nabla^*\nabla_{p}$ is the Bochner Laplacian associated to the standard Levi-Civita connetion on $\wedge^pT^*M$ and $R_p$ is the {Weitzenb\"ock operator}, a (pointwise) selfadjoint element in $\Gamma(X,{\rm End}(\wedge^pT^*X))$  whose local expression depends on the  curvature tensor of $(X,g)$ \cite{Ro2}. We note that $R_1={\rm Ric}$.
Also, 
recall that the Clifford bundle ${\sf Cl}(TX)$ may be viewed as a Dirac bundle over itself under left Clifford multiplication. Moreover, under 
the standard vector bundle identification $\wedge T^*X={\sf Cl}(TX)$, one has $D_{\sf c}=d+d^*$ \cite[Chapter II, Theorem 5.12]{LM}, so $\Delta_p$ is a generalized Dirac Laplacian by (\ref{sqdirac}).

To implement boundary conditions in this setting we note that, 
given $\alpha\in\mathcal A^p(X)$, its restriction to $\Sigma$ decomposes into its tangential and normal components, namely, 
\begin{equation}\label{decortabs}
\alpha=\alpha_{\rm t}+\alpha_{\rm n}.
\end{equation}

\begin{definition}\label{absrel}
	We say that a $p$-form $\alpha$ is {\rm absolute } if $\alpha_{\rm n}=0$ and $(d\alpha)_{\rm n}=0$. 
\end{definition}

In turns out that the differential condition in Definition \ref{absrel} can be expressed in terms of the shape operator $B=-\nabla_\nu$  of $\Sigma$.  
To see this, extend $B$  to $TM|_\Sigma$ by declaring that $B\nu=0$ and then extend this further to $\wedge^pT^*X|_{\Sigma}$ as the selfadjont operator $\mathcal B_p$ given by
\[
(\mathcal B_p\alpha)(e_1,\cdots,e_p)=\sum_i\alpha(e_1,\cdots,Be_i,\cdots, e_p),
\]
where $\{e_i\}$ is a local orthonormal frame. 
Notice that $\mathcal B_p$ preserves the decomposition given by (\ref{decortabs}). More precisely, if $\Pi_{\rm t}$ and $\Pi_{\rm n}$  
denote the orthogonal projections onto the tangential and normal factors, respectively, with the corresponding orthonormal bundle decomposition $\wedge^pTX|_\Sigma=\mathcal F_{\rm t}\oplus\mathcal F_{\rm n}$, then $\mathcal B_p$ commutes with both projections. In particular, if $\alpha$ is absolute then $\mathcal B_p\alpha\in\Gamma(\Sigma,{\mathcal F_{\rm t}})$. 

If we choose $e_i$ so that $Be_j=\kappa_je_j$, $j=1,\cdots,n-1$, where $\kappa_j$ are the principal curvatures of $\Sigma$, it is immediate to check that 
\[
(\mathcal B_p\alpha)(e_{j_1},\cdots,e_{j_p})=\left(\sum_k\kappa_{j_k}\right)\alpha(e_{j_1},\cdots,e_{j_p}), \quad \alpha\in\Gamma(\mathcal F_{\rm t}),
\]
which shows that the sums in the brackets are the  eigenvalues of $\mathcal B_p|_{\mathcal F_{\rm t}}$. The remarks above allow us to redefine $\mathcal B_p$ so that $\mathcal B_p|_{\mathcal F_{\rm n}}=0$.

The next result shows that absolute boundary conditions are of mixed type.

\begin{proposition}\cite[Proposition 5.1]{dL1}
	\label{abscond2}
	A differential $p$-form $\alpha$ is absolute if and only if 
	\begin{equation}
	\label{absbound3}
	\Pi_{\rm t}(\nabla_\nu-\mathcal B_p)\alpha=0, \quad 	\Pi_{\rm n}\alpha=0.
	\end{equation}
\end{proposition}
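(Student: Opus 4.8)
The plan is to observe first that the Dirichlet-type conditions on the two sides of the asserted equivalence coincide tautologically, and then to reduce everything to a single pointwise identity along $\Sigma$ relating $(d\alpha)_{\rm n}$ to $(\nabla_\nu-\mathcal B_p)\alpha$. Concretely, I would encode the tangential/normal splitting through the interior product $\iota_\nu$ and exterior multiplication $\nu^\flat\wedge$ by the $1$-form $\nu^\flat$ dual to $\nu$: for any form $\gamma$ restricted to $\Sigma$ one has $\gamma_{\rm n}=\nu^\flat\wedge\iota_\nu\gamma$, so that $\Pi_{\rm n}\alpha=0$ is the same as $\iota_\nu\alpha=0$, which matches the condition $\Pi_{\rm n}\alpha=0$ on the right. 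It therefore remains to prove, under the standing hypothesis $\iota_\nu\alpha=0$, that $(d\alpha)_{\rm n}=0$ is equivalent to $\Pi_{\rm t}(\nabla_\nu-\mathcal B_p)\alpha=0$.

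For the core computation I would use the torsion-free formula $d\alpha=\sum_i e^i\wedge\nabla_{e_i}\alpha$ with respect to an orthonormal frame $\{e_i\}$ adapted to $\Sigma$ (so that $e_n=\nu$ and $e^n=\nu^\flat$), which gives
\[
\iota_\nu d\alpha=\nabla_\nu\alpha-\sum_i e^i\wedge\iota_\nu\nabla_{e_i}\alpha.
\]
The key step is to control $\iota_\nu\nabla_{e_i}\alpha$ for $e_i$ tangent to $\Sigma$. Since $\iota_\nu\alpha$ vanishes identically along $\Sigma$, its covariant derivative in tangential directions vanishes; differentiating $\iota_\nu\alpha=0$ and using $\nabla_{e_i}\nu=-Be_i$ then yields $\iota_\nu\nabla_{e_i}\alpha=\iota_{Be_i}\alpha$. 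Substituting this and wedging with $\nu^\flat$, the $i=n$ term drops out because $\nu^\flat\wedge\nu^\flat=0$, and I expect to arrive at the clean identity $(d\alpha)_{\rm n}=\nu^\flat\wedge(\nabla_\nu-\mathcal B_p)\alpha$.

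The remaining point is to recognize $\sum_{i<n}e^i\wedge\iota_{Be_i}\alpha$ as $\mathcal B_p\alpha$: in a principal-curvature frame $Be_j=\kappa_j e_j$ the operator $\sum_j\kappa_j\,e^j\wedge\iota_{e_j}$ acts on a tangential monomial $e^{j_1}\wedge\cdots\wedge e^{j_p}$ by the scalar $\sum_k\kappa_{j_k}$, which is exactly the eigenvalue description of $\mathcal B_p|_{\mathcal F_{\rm t}}$ recorded above. With the identity $(d\alpha)_{\rm n}=\nu^\flat\wedge(\nabla_\nu-\mathcal B_p)\alpha$ in hand, I conclude by noting that $\nu^\flat\wedge(\cdot)$ is injective on tangential forms, so its left-hand side vanishes precisely when $\Pi_{\rm t}(\nabla_\nu-\mathcal B_p)\alpha=0$. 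I expect the main obstacle to be the bookkeeping in the tangential-differentiation step — making rigorous that $\iota_\nu\alpha\equiv 0$ along $\Sigma$ forces $\nabla_{e_i}(\iota_\nu\alpha)=0$ for tangent $e_i$, and tracking the sign conventions in $B=-\nabla_\nu$ and in the contraction/wedge identities so that $\mathcal B_p$ emerges with the correct sign.
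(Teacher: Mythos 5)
Your argument is correct and complete. Note that the paper itself offers no proof of this proposition --- it simply cites \cite[Proposition 5.1]{dL1} --- so there is nothing in-text to compare against; your derivation of the pointwise identity $(d\alpha)_{\rm n}=\nu^\flat\wedge\Pi_{\rm t}(\nabla_\nu-\mathcal B_p)\alpha$ under the hypothesis $\iota_\nu\alpha=0$ along $\Sigma$ is exactly the standard computation carried out in that reference. All the delicate points are handled properly: the tangential differentiation of the identity $\iota_\nu\alpha\equiv 0$ on $\Sigma$ giving $\iota_\nu\nabla_{e_i}\alpha=\iota_{Be_i}\alpha$ via $\nabla_{e_i}\nu=-Be_i$, the identification of $\sum_{j}\kappa_j\,e^j\wedge\iota_{e_j}$ with $\mathcal B_p$ on tangential monomials consistently with the eigenvalue description recorded before the proposition, and the injectivity of $\nu^\flat\wedge(\cdot)$ on tangential forms, which yields both directions of the equivalence at once.
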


This discussion shows that if we take $\mathcal F_+=\mathcal F_{\rm t}$, $\mathcal F_-=\mathcal F_{\rm n}$ and $S=\mathcal B_p$, and of course if we assume that both $R_p$ and $\mathcal B_p$ are bounded from below then the general setting in Sections \ref{feyn-kac}
and \ref{semigroup} applies here. In particular, we have the corresponding heat semigroup  
$e^{-\frac{1}{2}t\Delta_{R_p,\mathcal B_p}}$ at our disposal. 

To apply Theorem \ref{main} in this setting, it remains to check that $\Delta_{R_p,\mathcal B_p}$ satisfies Assumption \ref{assdirac}. This   is related to the remarkable fact that the quadratic form associated to the Hodge Laplacian $\Delta_p$ on $\mathcal D(\wedge^pT^*X)$ is always nonnegative, irrespective of the existence of lower bounds for $R_p$ and $\mathcal B_p$. This is already suggested by (\ref{sqdirac}), which expresses the Hodge Laplacian as the square of the Dirac operator $D=d+d^\star$. The formal proof uses the integrated version of (\ref{sqdirac}), namely, 
\begin{eqnarray*}
	\int_X\langle \Delta_p\alpha,\alpha\rangle\,dX
	& = & 
	\int_X\left(|d\alpha|^2+|d^\star\alpha|^2\right)dX\nonumber\\
	& & \qquad +\int_\Sigma \left((d^\star\alpha)_{\rm t}\wedge\star\alpha_{\rm n}-\alpha_{\rm t}\wedge\star(d\alpha)_{\rm n}\right),
\end{eqnarray*}	
so if  $\alpha$ is absolute we end up with  
\begin{equation}\label{preself2}
\int_X\langle \Delta_p\alpha,\alpha\rangle\,dX= 
\int_X\left(|d\alpha|^2+|d^\star\alpha|^2\right)dX,
\end{equation}	
which shows that $Q$ is {nonnegative}. Moreover, if $\Delta_p\alpha=0$ then $d\alpha=0$ and $d^*\alpha=0$ so that $D\alpha=0$, as desired.

\begin{remark}\label{nonneg}{\em We should emphasize that even though $Q$ is nonnegative, uniform lower bounds on $R_p$ and $\mathcal B_p$ are still required in order to obtain the semigroup domination property corresponding to Theorem \ref{domsem0} in this setting. A counterexample may be found by adapting the elementary construction  in \cite{St}. This yields a manifold $X$ for which 
		\[
		e^{-\frac{1}{2}t\Delta_1}(\mathcal D_B(\wedge^1T^*X))\subsetneq L^1(X,\wedge^1T^*X),
		\]
		 which clearly contradicts Collorary \ref{domsem}.
	}
\end{remark}

To rephrase Theorem \ref{main} in this setting we attach to the curvature invariants above the functions
\[
r_{(p)}:X\to \mathbb R, \quad r_{(p)}(x)=\inf_{|\alpha|=1}\langle R_p(x)\alpha,\alpha\rangle, 
\]
and 
\[
\kappa_{(p)}:\Sigma\to\mathbb R, \quad \kappa_{(p)}(x)=\inf_{1\leq j_1<\cdots<j_p\leq n-1}\kappa_{j_1}(x)+\cdots+\kappa_{j_p}(x).
\]
With this notation at hand we can state the  main result of this subsection, which is a straightforward application of Theorem \ref{main}. 

\begin{theorem}\label{mainform}
	If Assumption \ref{assump} is satisfied and  for some $1\leq p\leq n-1$ we have 
	$r_{(p)}\geq c_1$ for some $c_1>-\infty$
	then the heat conservation principle holds for $\Delta_{R_p,\mathcal B_p}$.
\end{theorem}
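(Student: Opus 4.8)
The plan is to recognize that the Hodge Laplacian $\Delta_p=D^2$ with $D=d+d^\star$, acting on $\mathcal E=\wedge^pT^*X$ under absolute boundary conditions, fits into the framework of Theorem \ref{main} verbatim, so that the conclusion follows at once. Most of the required identification has already been carried out above: by (\ref{sqdirac}) together with the identification $\wedge T^*X={\sf Cl}(TX)$, the operator $D=d+d^\star=D_{\sf c}$ is a Dirac operator and $\Delta_p=D^2$ is the associated generalized Dirac Laplacian with Weitzenb\"ock operator $W=R_p$, while Proposition \ref{abscond2} shows that absolute boundary conditions coincide with the mixed boundary conditions of Definition \ref{mixedcond} for the choice $\mathcal F_+=\mathcal F_{\rm t}$, $\mathcal F_-=\mathcal F_{\rm n}$ and $S=\mathcal B_p$. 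Thus only Assumptions \ref{assumpwb} and \ref{assdirac} remain to be verified.

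First I would check Assumption \ref{assumpwb}. The bound $w=r_{(p)}\geq c_1$ is the standing hypothesis of the theorem, so it remains only to produce a lower bound $\sigma=\kappa_{(p)}\geq c_2>-\infty$ for the boundary endomorphism. This is the one place where Assumption \ref{assump} must be used directly, and where I expect to split into cases: if $\Sigma$ is convex then $B\geq 0$, every principal curvature $\kappa_j$ is nonnegative, and hence $\kappa_{(p)}\geq 0$; in the complementary case $B$ is assumed bounded, so the $\kappa_j$ are bounded below and any $p$-fold sum of them is bounded below as well. In either branch $\kappa_{(p)}$ is bounded below, which furnishes the required constant $c_2$.

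Next I would verify Assumption \ref{assdirac}, that bounded harmonic $p$-forms lie in $\ker D$. This is precisely the nonnegativity argument recorded after (\ref{preself2}): for $\eta\in\mathcal H(\wedge^pT^*X)$ one has $0=(\Delta_{R_p,\mathcal B_p}\eta,\eta)=\int_X\left(|d\eta|^2+|d^\star\eta|^2\right)dX$, whence $d\eta=0$ and $d^\star\eta=0$, so that $D\eta=(d+d^\star)\eta=0$; crucially this holds regardless of any lower bounds on $R_p$ and $\mathcal B_p$. With all three assumptions in force I would then simply invoke Theorem \ref{main} to conclude that the heat conservation principle holds for $\Delta_{R_p,\mathcal B_p}$. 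The only genuinely substantive step is the lower bound on $\kappa_{(p)}$, which forces the case analysis dictated by Assumption \ref{assump}; the verification of Assumption \ref{assdirac} is a direct restatement of the nonnegativity of the Hodge quadratic form already established above.
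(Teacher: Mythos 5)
Your proposal is correct and follows essentially the same route as the paper: the paper's proof of Theorem \ref{mainform} likewise consists of invoking Theorem \ref{main} after noting that $\kappa_{(p)}\geq c_2>-\infty$ because Assumption \ref{assump} bounds $B$ from below, with the identification of absolute boundary conditions as mixed conditions (Proposition \ref{abscond2}) and the verification of Assumption \ref{assdirac} via the nonnegativity identity (\ref{preself2}) carried out in the surrounding discussion exactly as you describe.
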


\begin{proof}
	Use that $\kappa_{(p)}\geq c_2>-\infty$ because $B$ is bounded from below in view of Assumption \ref{assump}. 
\end{proof}

\begin{corollary}\label{cormain}
	If Assuption \ref{assump} is satisfied then the heat conservation principle holds for $\Delta_{R_1,B}$. 
\end{corollary}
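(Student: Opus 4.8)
The plan is to deduce this directly from Theorem \ref{mainform} specialized to $p=1$. Beyond Assumption \ref{assump}, the only hypothesis of that theorem is the existence of a finite lower bound $c_1$ for $r_{(p)}$, so the whole task reduces to checking that $r_{(1)}$ is bounded from below. First I would invoke the identification $R_1={\rm Ric}$ recorded in the discussion preceding the theorem: the Weitzenb\"ock operator on $1$-forms is exactly the Ricci endomorphism. Hence $r_{(1)}(x)=\inf_{|\alpha|=1}\langle R_1(x)\alpha,\alpha\rangle$ is the lowest eigenvalue of ${\rm Ric}$ at $x$, and since Assumption \ref{assump} already postulates that ${\rm Ric}$ is bounded from below, the required bound $r_{(1)}\geq c_1>-\infty$ holds with no extra work.

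Next I would note that for $p=1$ the endomorphism $\mathcal B_1$ is nothing but the shape operator $B$: unwinding $(\mathcal B_p\alpha)(e_1,\cdots,e_p)=\sum_i\alpha(e_1,\cdots,Be_i,\cdots,e_p)$ at $p=1$ gives $(\mathcal B_1\alpha)(e_1)=\alpha(Be_1)$, so $\mathcal B_1$ acts through the musical isomorphism as $B$, with principal-curvature eigenvalues $\kappa_j$. Thus $\Delta_{R_1,\mathcal B_1}=\Delta_{R_1,B}$, and the absolute boundary conditions of Proposition \ref{abscond2} coincide with the mixed conditions attached to $S=B$.

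With both hypotheses of Theorem \ref{mainform} verified, the heat conservation principle for $\Delta_{R_1,B}$ follows at once. I do not expect any real obstacle here: the entire content is carried by Theorem \ref{mainform}, and the only substantive observation is that the lower bound on $r_{(1)}$ comes for free because it is literally the lower Ricci bound built into Assumption \ref{assump} --- in contrast with general $p$, where $r_{(p)}\geq c_1$ must be imposed as a separate hypothesis.
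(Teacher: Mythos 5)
Your proposal is correct and follows essentially the same route as the paper: apply Theorem \ref{mainform} with $p=1$ and observe that the lower bound $r_{(1)}\geq c_1$ is automatic because $R_1={\rm Ric}$ is bounded below by Assumption \ref{assump} (the boundary bound $\kappa_{(1)}$ likewise coming from the bound on $B$). The paper's one-line proof additionally cites Theorem \ref{consesp}, but that adds nothing beyond what is already packaged into Theorem \ref{mainform}, so your argument is complete.
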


\begin{proof}
	Combine Theorem \ref{mainform} with Theorem \ref{consesp} and observe that  here both lower bounds $r_{(1)}\geq c_1$ and $\sigma_{(1)}\geq c_2$ already follow from Assumption \ref{assump}. 
\end{proof}

\begin{remark}\label{vanishforms}
	{\rm From Corollary \ref{vanishres} we obtain a vanishing result for absolute $L^2$ harmonic $p$-forms under the assumptions that $c_1>-\lambda_0$ and $\Sigma$
is (weakly) $p$-convex in the sense that 
\[
\inf_{x\in \Sigma}\kappa_{(p)}(x)\geq 0.
\] 
This strengthens \cite[Theorem 5.3]{dL1}, where the result was obtained under the assumption that $X$ has bounded geometry.}
	\end{remark}

\begin{remark}\label{function}
	{\rm A simpler variant of the argument leading to Theorem \ref{main}, which dispenses with Proposition \ref{auxx}, yields a proof of Theorem \ref{consesp}. We first note that by geodesic completeness  we may assume that $\|dh_i\|_{{L^\infty}(X,\wedge^1T^*X)}\to 0$. Thus, using (\ref{intident2}) with $\phi=f$ a function as in item (4) of Proposition \ref{equiv} and $\xi=h_i$ we have
		\begin{eqnarray*}
			\left(e^{-\frac{1}{2}t\Delta_0}f-f,h_i\right)_0 & = & -\frac{1}{2}\int_0^t\int_X\langle e^{-\frac{1}{2}\tau\Delta_0}f,\Delta_0 h_i\rangle dXd\tau\\
			& = & -\frac{1}{2}\int_0^t\int_X\langle e^{-\frac{1}{2}\tau\Delta_0}f,d^*dh_i\rangle dXd\tau\\
			& = & -\frac{1}{2}\int_0^t\int_X\langle de^{-\frac{1}{2}\tau\Delta_0}f,dh_i\rangle dXd\tau\\
			& = & -\frac{1}{2}\int_0^t\int_X\langle e^{-\frac{1}{2}\tau\Delta_{R_1,B}}df,dh_i\rangle dXd\tau,
		\end{eqnarray*}	
		where here we assume that $t<{\bf e}$, the extinction time of ${\sf X}_t$. It follows from Theorem \ref{domsem} applied to $1$-forms that 
		\begin{eqnarray*}
		\left|	\left(e^{-\frac{1}{2}t\Delta_{R_1,B}}f-f,h_i\right)_0\right| & \leq & 
		\frac{1}{2}\|dh_i\|_{{L^\infty}(X,\wedge^1T^*X)}\int_0^t\|e^{-\frac{1}{2}\tau\Delta_{R_1,B}}df\|_{{L^1}(X,\wedge^1T^*X)}d\tau\nonumber\\
		& \leq & \frac{C_1}{2}\|dh_i\|_{{L^\infty}(X,\wedge^1T^*X)}\|df\|_{L^1(X,\mathcal E)}\int_0^te^{C_2\tau}d\tau.
		\end{eqnarray*}
		By sending $i\to +\infty$ we then recover item (4) in Proposition  \ref{equiv} for some $t>0$, which proves Theorem \ref{consesp}. Note that in order to avoid circularity in the argument, it is crucial here not using the functions $\zeta_i$ in Proposition \ref{auxx}. Finally, we observe that the argument above is a  concrete manifestation of an abstract reasoning in \cite[Theorem 3.2.6]{BGL}. 
	} 
\end{remark}

\subsection{The Dirac Laplacian}
\label{spin}

Let $X$ be a ${\rm spin}^c$ manifold and fix a ${\rm spin}^c$ structure. In \cite[Section 5]{dL1} it is proved a Feynman-Kac formula for the semigroup $e^{-\frac{1}{2}t\Delta}$ associated to the Dirac Laplacian $\Delta=D^2$, where here $D$ is the Dirac operator acting on spinors associated to a metric $g$ on $X$ and a unitary connection on the auxiliary complex line bundle $\mathcal U$. This formula was established under the assumption that the pair $(X,\Sigma)$ has bounded geometry  and by imposing suitable boundary conditions on spinors along $\Sigma$. As a consequence, a semigroup domination result for $e^{-\frac{1}{2}t\Delta}$ was derived in this setting. We now show that   more generally, i.e. under Assumption \ref{assump}, we may also  derive  a semigroup domination inequality for $e^{-\frac{1}{2}t\Delta}$ under suitable mixed boundary conditions. As a consequence we will show that the corresponding heat conservation principle for $\Delta$ holds. 

Let $\mathbb SX=P_{{\rm Spin}^c}(X)\times_{\zeta} V$ be the ${\rm spin}$ bundle of $X$, where $\zeta$ is the complex spin representation \cite{Fr,LM}. Thus, $P_{{\rm Spin}^c}(X)$ is a ${\rm Spin}_n^c$-principal bundle  double covering $P_{{\rm SO}}(X)\times P_{{\rm U}_1}(X)$, where $P_{{\rm U}_1}(X)$ is the ${\rm U}_1$-principal bundle associated to $\mathcal U\to X$,  so  the Levi-Civita connection  on $TX$ induces a  metric  connection on $\mathbb SX$, still denoted $\nabla$. The corresponding Dirac operator $D:\Gamma(X,\mathbb SX)\to\Gamma(X,\mathbb SX)$ is locally given by
$$
D\psi=\sum_{i=1}^n \gamma(e_i)\nabla_{e_i}\psi,\quad \psi\in\Gamma(X,\mathbb SX),
$$
where $\{e_i\}_{i=1}^n$ is a local orthonormal frame and $\gamma:TX\to{\rm End}(\mathbb SX)$ is the Clifford product by tangent vectors.
In this setting, the  Dirac Laplacian operator $\Delta=D^2$ satisfies the Lichnerowicz decomposition
\begin{equation}\label{lich}
\Delta=\nabla^*\nabla+\mathfrak R, \quad \mathfrak R=\frac{\varrho}{4}+ \frac{1}{2}\gamma(i\Theta),
\end{equation}
where 
$\varrho$ is the scalar curvature of $X$ and $i\Theta$ if the curvature $2$-form of the given unitary connection on $\mathcal F$. Clearly, this is a generalized Dirac Laplacian.

In the presence  of the boundary we must also consider the restricted spin bundle   $\mathbb SX|_{\Sigma}$. By defining the restricted Clifford product and the restricted connection by 
$$
\gamma^{\intercal}(X)\psi=\gamma(X)\gamma(\nu) \psi,\quad X\in \Gamma(\Sigma,T\Sigma), \quad \psi\in \Gamma(\Sigma,\mathbb SX|_{\Sigma}),
$$ 
and 
\begin{equation}\label{conn0}
\nabla^{\intercal}_X\psi  =  \nabla_X\psi-\frac{1}{2}\gamma^{\intercal}(BX)\psi,
\end{equation}
respectively, where as usual $B=-\nabla\nu$ is the shape operator of  $\Sigma$, then  $\mathbb SX|_\Sigma$ becomes a Dirac bundle over ${\sf Cl}(TX|_{\Sigma})$ \cite{HMZ,NR}.
The associated Dirac operator $D^{\intercal}:\Gamma(\Sigma,\mathbb SX|_{\Sigma})\to\Gamma(\Sigma,\mathbb SX|_{\Sigma})$ is
$$
D^{\intercal}=\sum_{j=1}^{n-1}\gamma^{\intercal}(e_j)\nabla^{\intercal}_{e_j},
$$
where the frame has been adapted so that $e_n=\nu$.

To see the relevance of this tangential Dirac operator, assume $Be_j=\kappa_je_j$, where $\kappa_j$ are the principal curvatures of $\Sigma$. It follows that 
$$
D^{\intercal}=\frac{H}{2}+
\sum_{j=1}^{n-1}\gamma(e_j)\nabla_{e_j},
$$
where $H={\rm tr}\,B$ is the mean curvature. Hence,  ${\sf D}=-\gamma(\nu)D$ is given by
\begin{equation}\label{dirt1}
{\sf D}=D^{\intercal}+\nabla_\nu-\frac{H}{2}. 
\end{equation}

We now specify mixed boundary conditions in this setting. We start with an involutive endomorphism $\mathcal I\in\Gamma(X|_\Sigma,\mathbb SX)$, which we extend to a collared neighborhood of $\Sigma$ such that $\nabla_\nu\mathcal I=0$. Let $\Pi_{\pm}$ be the corresponding projections and set $\mathcal F_\pm=\Pi_\pm\mathbb SX|_\Sigma$. In particular, $\nabla_\nu\Pi_{\pm}=\Pi_{\pm}\nabla_\nu$. We now recall a notion introduced in \cite{dL1}. 

\begin{definition}\label{compcond} 
	We say that the tangential Dirac operator $D^\intercal$ intertwines the projections if 
	$\Pi_\pm D^\intercal=D^\intercal\Pi_{\mp}$.
\end{definition}

If this compatibility condition between $D^\intercal$ and $\Pi_\pm$ holds and $\psi,\eta\in \Gamma(\Sigma,\mathcal F_+)$ then $\langle D^\intercal \psi,\eta\rangle=0$ and hence, by (\ref{dirt1}), 
\begin{eqnarray}\label{bdspin}
	\langle {\sf D}\psi,\eta\rangle & = & \left\langle\left(\nabla_\nu-\frac{H}{2}\right)\psi,\eta\right\rangle\nonumber\\
	& = & \left\langle\Pi_+\left(\nabla_\nu-\frac{H}{2}\right)\psi,\eta\right\rangle
	\end{eqnarray}
Thus, we may proceed as in the proof of Proposition \ref{propmixed} to get
\begin{eqnarray*}
	\int_\Sigma\langle\nabla_\nu\psi,\eta\rangle d\Sigma
		& = & \int_\Sigma\left\langle{\sf D}\psi,\eta\right\rangle d\Sigma+\int_\Sigma\frac{H}{2}\langle\psi,\eta\rangle d\Sigma\\
	& = & \int_\Sigma\left\langle\Pi_+\left(\nabla_\nu-\frac{H}{2}\right)\psi,\eta\right\rangle d\Sigma+\int_\Sigma\frac{H}{2}\langle\psi,\eta\rangle d\Sigma. 
	\end{eqnarray*}
If we think of $H$ as an endomorphism $\widehat H$ of $\mathbb SX|_\Sigma$ such that $\widehat H=H\,{\rm Id}$ on $\mathcal F_+$ and $\widehat H=0$ on $\mathcal F_-$, and impose the mixed boundary conditions 
\begin{equation}
\label{bdconddir}
 \Pi_+\left(\nabla_\nu-\frac{\widehat H}{2}\right)\psi=0,\qquad \Pi_-\psi=0, 
\end{equation}
then for compactly supported spinors $\psi$ and $\eta$ satisfying these conditions we see that the bilinear form associated to $\Delta$ satisfies
\[
Q(\psi,\eta)=\int_X\langle\nabla\psi,\nabla\eta\rangle dX+\int_X\langle\mathfrak R\psi,\eta\rangle dX + \frac{1}{2}\int_\Sigma\langle {\widehat H}\psi,\eta\rangle d\Sigma. 
\]
Clearly, this is symmetric and bounded from below if $\mathfrak R$ and $H$ are uniformly bounded from below. It follows from (\ref{lich}) and Assumption \ref{assump} that $\mathfrak R$ is bounded from below if and only if so does $i\Theta$. Moreover, $H$ is always bounded from below.

To apply Theorem \ref{main} in this setting, it remains to check that Assumption \ref{assdirac} is satisfied. To see this, take $\psi\in\Gamma(X,\mathbb SX)$ compactly supported and recall that the corresponding Green's formula holds, namely, 
\begin{equation}\label{greenf}
\int_X\langle \Delta\psi,\psi\rangle dX=\int_X|D\psi|^2dX+\int_\Sigma\langle {\sf D}\psi,\psi\rangle d\Sigma. 
\end{equation}
Thus, if $\psi$ satisfies (\ref{bdconddir}) then $\langle {\sf D}\psi,\psi\rangle=0$ by (\ref{bdspin}), so we get 
\[
\int_X\langle \Delta\psi,\psi\rangle dX=\int_X|D\psi|^2dX, 
\]
and hence $\Delta\psi=0$ implies $D\psi=0$, as desired. 
Thus, as a consequence of Theorem \ref{main} we obtain the following result.

\begin{theorem}\label{mainspin}
	Let  $X$ be a ${\rm spin}^c$ manifold satisfying Asumption \ref{assump} and assume that $i\Theta$ is bounded from below. Then the heat conservation principle holds for $\Delta$.
	\end{theorem}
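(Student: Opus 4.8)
The plan is to verify that Theorem~\ref{mainspin} is a direct instance of the general machinery of Theorem~\ref{main}, so the work reduces to checking that the three hypotheses of that theorem are met by the Dirac Laplacian $\Delta = D^2$ equipped with the mixed boundary conditions~(\ref{bdconddir}). First I would recall that $\Delta = D^2$ is by construction a generalized Dirac Laplacian, with Weitzenb\"ock operator $W = \mathfrak R = \frac{\varrho}{4} + \frac{1}{2}\gamma(i\Theta)$ supplied by the Lichnerowicz decomposition~(\ref{lich}), and that the boundary endomorphism is $S = \frac{1}{2}\widehat H$, so that Assumptions~\ref{assumpwb} and~\ref{assdirac} are exactly the two conditions to be checked ($(X,g)$ already satisfies Assumption~\ref{assump} by hypothesis).

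For Assumption~\ref{assumpwb}, I would argue that $\mathfrak R$ is uniformly bounded from below and that $\widehat H$ is uniformly bounded from below. The scalar curvature term $\frac{\varrho}{4}$ is controlled because Assumption~\ref{assump} forces a lower Ricci bound, hence a lower scalar curvature bound; the twisting term $\frac{1}{2}\gamma(i\Theta)$ is controlled precisely by the standing hypothesis that $i\Theta$ is bounded from below, so $w = \inf_{|\psi|=1}\langle \mathfrak R \psi, \psi\rangle \geq c_1$ for some finite $c_1$. For the boundary piece, $\widehat H = H\,{\rm Id}$ on $\mathcal F_+$ and $0$ on $\mathcal F_-$, and $H = {\rm tr}\, B$ is uniformly bounded from below since Assumption~\ref{assump} forces $B$ to be bounded (either $B\geq 0$ in the convex case, or $B$ bounded in the collar case), giving $\sigma \geq c_2 > -\infty$.

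For Assumption~\ref{assdirac}, I would invoke the Green's formula~(\ref{greenf}) together with the boundary computation~(\ref{bdspin}). Provided the tangential Dirac operator $D^\intercal$ intertwines the projections $\Pi_\pm$ in the sense of Definition~\ref{compcond}, the boundary integrand $\langle {\sf D}\psi, \psi\rangle$ vanishes for spinors satisfying~(\ref{bdconddir}), so~(\ref{greenf}) collapses to $\int_X \langle \Delta\psi, \psi\rangle\, dX = \int_X |D\psi|^2\, dX$. Consequently any bounded harmonic spinor $\psi \in \mathcal H(\mathbb SX) \cap L^\infty(X, \mathbb SX)$ satisfies $D\psi = 0$, which is exactly the inclusion $\mathcal H(\mathbb SX)\cap L^\infty \subset \ker D$ demanded by Assumption~\ref{assdirac}.

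With all three assumptions in hand, the conclusion follows immediately by applying Theorem~\ref{main} to $\Delta_{\mathfrak R, \widehat H/2}$. The step I expect to demand the most care is the boundary analysis: one must ensure that the chosen involution $\mathcal I$ genuinely produces a $D^\intercal$ intertwining the projections (otherwise the boundary term in~(\ref{greenf}) need not vanish), and that the integration-by-parts identities~(\ref{bdspin}) and~(\ref{greenf}) are valid for the $L^\infty$ harmonic sections $\eta$ appearing in Definition~\ref{consforms} rather than merely for compactly supported spinors. The interior verification of the lower bounds is routine given the hypotheses, so the genuine content lies in the compatibility of the mixed boundary conditions with the Dirac structure.
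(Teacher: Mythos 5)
Your proposal follows essentially the same route as the paper: identify $W=\mathfrak R$ and $S=\widehat H/2$, deduce the lower bound on $\mathfrak R$ from the Ricci (hence scalar curvature) bound in Assumption \ref{assump} together with the hypothesis on $i\Theta$, deduce the bound on $\widehat H$ from the boundedness of $B$, verify Assumption \ref{assdirac} via the Green's formula (\ref{greenf}) and the intertwining condition of Definition \ref{compcond}, and then invoke Theorem \ref{main}. The caveat you raise about extending the integration by parts from compactly supported spinors to bounded harmonic ones is a fair point of care, but it is handled (or rather, treated at the same level of rigor) in the paper's own argument, so your proof is correct and matches the paper's.
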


We note that examples of boundary conditions satisfying (\ref{bdconddir}) include both chilarity and  MIT bag boundary conditions; see Remarks 5.1 and 5.2 in  \cite{dL1}.

\begin{remark}\label{finalr}
	{\rm It is worthwhile to observe that the Green's formula in (\ref{greenf}) holds for {\em any} generalized Dirac Laplacian as long as we define ${\sf D}=-\nu\cdot D$. In particular, we see that Assumption \ref{assdirac} holds whenever we impose the boundary condition
	\[
	{\sf D}\psi=0, \quad \psi\in\Gamma(X,\mathcal E|_{\Sigma}). 
	\]
However, in general this is {\em not} a mixed boundary condition according to Definition \ref{mixedcond}. In fact, the whole point of the intertwining condition in Definition \ref{compcond} is to make sure that this is the case for the Dirac Laplacian acting on spinors. We refer to \cite[Remark 5.3]{dL1} for a discussion of this issue in the context of the Hodge Laplacian considered in the previous subsection.}
	\end{remark}

\begin{remark}\label{vanishspinors}
	{\rm From Corollary \ref{vanishres} we obtain a vanishing result for $L^2$ harmonic spinors satisfying the given boundary conditions  if we further assume that $\mathfrak R\geq c>-\lambda_0$ and $\Sigma$
		is mean convex ($H\geq 0$).
		This strengthens \cite[Theorem 5.5]{dL1}, where the result was obtained under the assumption that $X$ has bounded geometry.}
\end{remark}

\subsection{The Jacobi operator on free boundary minimal immersions}\label{freeb}
Let $(\overline X,\overline g)$ be a non-compact Riemannian manifold of dimension $\overline n>n$ and with boundary $\overline \Sigma$. 
Let $\Psi:(X,g)\looparrowright (\overline X,\overline g)$ be a non-compact isometric immersion with boundary $\Sigma=X\cap\overline \Sigma$. 
If $TX^{\perp}$ is the normal bundle of $X$, $\mathfrak B\in\Gamma(X,{{\rm Hom}(TX\otimes TX,TX^\perp)})$ is the second fundamental form of $X$. Also, we denote by $
\overline R$ the curvature tensor of $(\overline X,\overline g)$. 

Any compactly supported vector field $U\in\Gamma(X,T\overline X|_{X})$ which is {\em admissible } in the sense that it is tangent to $\overline \Sigma$ along $\Sigma$ gives rise to a one-parameter family of isometric immersions $t\in(-\varepsilon,\varepsilon)\mapsto \Psi_t:(X, g_t)\looparrowright(\overline X,\overline g)$, $\varepsilon>0$, such that $\Psi_0=\Psi$ and 
\[
\frac{\partial \Psi_t}{\partial t}|_{t=0}=U. 
\] 
We then say that $U$ is the variational field associated to the variation $\Psi_t$. 
A direct computation gives the first variation of the area functional
\[
\left(\delta_{(X,g)}{\rm Area}\right)(U)=\frac{d}{dt}{\rm Area}(X_t,g_t)|_{t=0}
\]
along a variational field $U$. We have 
\begin{equation}\label{criteqA1}
\left(\delta_{(X,g)}{\rm Area}\right)(U)=-\int_X\langle \mathcal H,U\rangle dX-\int_\Sigma\langle U,\nu\rangle d\Sigma,
\end{equation}
where $\mathcal H={\rm trace}\, \mathfrak B$ is the mean curvature vector and $\nu$ is the inward pointing unit co-normal vector along $\Sigma$. 

\begin{definition}\label{criteq}
	We say that $X$ is a free boundary minimal immersion if it is a critical point for the functional ${\rm Area}$ under compactly supported variations. 
\end{definition}

By (\ref{criteqA1}) this means that $\mathcal H=0$  along $X$ (this is the minimality condition) and $\langle U,\nu\rangle=0$ along $\Sigma$ for any $U$. This latter condition means that $\Sigma$ meets $\overline \Sigma$ orthogonally (this is the free boundary condition). Notice that this implies that $\nu$ is normal to $\overline \Sigma$. In particular, it makes sense to consider $B_{\overline \Sigma}^\nu$, the shape operator of $\overline\Sigma$ in the direction of $\nu$. 

If $(X,g)$ is a free boundary minimal immersion, it is natural to compute the second variation of the area  along {admissible} variational fields $U$ and $V$ as above. The result is
\begin{equation}\label{segvarform}
\left(\delta^2_{(X,g)}{\rm Area}\right)(U,V)=\int_X\langle \mathcal J U,V\rangle dX-\int_\Sigma\langle \left(\nabla^\perp_\nu+B_{\overline \Sigma}^\nu\right)U,V\rangle d\Sigma.
\end{equation}
Here, $\nabla^\perp$ is the normal connection on $TX^\perp$ and the {\em Jacobi operator} is given by
\[
\mathcal J=\nabla^*\nabla^{\perp}-{\sf W}, 
\]
where $\nabla^*\nabla^\perp$ is the Bochner Laplacian associated to $\nabla^\perp$, ${\sf W}={\sf R}+{\sf B}$,  ${\sf B}=\mathfrak B\circ \mathfrak B^\bullet\in\Gamma(X,{\rm End}(TX^\perp))$ and 
${\sf R}\in\Gamma(X,{\rm End}(TX^\perp))$ is given by
\[
\langle {\sf R}U,V\rangle=\sum_{i=1}^n\langle \overline R_{U,e_i}e_i,V\rangle. 
\]
Since ${\sf W}$ is clearly selfadjoint, $\mathcal J$ is a generalized Laplacian. But notice that it is {\em not} a generalized Dirac Laplacian, so a heat conservation principle corresponding to Theorem \ref{main} does not necessarily hold here; however, see Remark \ref{kahler}. 

As a consequence of (\ref{segvarform}), any Morse-theoretic notion involving this variational problem (like index, nullity, etc.) should be addressed by imposing to variational fields the Robin-type boundary condition
\begin{equation}\label{mixjac}
\left(\nabla^\perp_\nu+B_{\overline \Sigma}^\nu\right) U=0. 
\end{equation}
In particular, {\em Jacobi fields}, i.e. solutions of $\mathcal JU=0$, should be studied under this boundary condition. We refer to \cite{Scho} for details.

\begin{remark}\label{strict}{\rm 
		Note that, strictly speaking, (\ref{mixjac}) is of mixed type. Indeed, in the language of Section \ref{feyn-kac} it is obtained by taking $\mathcal I={\rm Id}$, so that $\Pi_+={\rm Id}$ and $\Pi_-=0$, and $S=-B_{\overline\Sigma}^\nu$.}
\end{remark}

Now, by (\ref{intpart}) we can rewrite (\ref{segvarform}) as 
\[
\left(\delta^2_{(X,g)}{\rm Area}\right)(U,V)=\int_X \left(\langle\nabla^\perp U,\nabla^\perp V\rangle-\langle{\sf W}U,V\rangle\right)dX-\int_\Sigma\langle B_{\overline \Sigma}^\nu U,V\rangle d\Sigma.
\]
Hence, the bilinear form
\[
Q(U,V)=\int_X\langle\mathcal JU,V\rangle dX
\]
is given by 
\begin{eqnarray*}
	Q(U,V) & = & \int_X \left(\langle\nabla^\perp U,\nabla^\perp V\rangle-\langle{\sf W}U,V\rangle\right)dX+\int_\Sigma\langle\nabla_\nu^\perp U,V\rangle d\Sigma\\
	& = & \int_X \left(\langle\nabla^\perp U,\nabla^\perp V\rangle-\langle {\sf W}U,V\rangle\right)dX+\int_\Sigma\langle(\nabla_\nu^\perp+B_{\overline \Sigma}^\nu) U,V\rangle d\Sigma\\
	&& \qquad -\int_\Sigma\langle B_{\overline\Sigma}^\nu U,V\rangle d\Sigma.
\end{eqnarray*}
Thus, $Q$ is symmetric and bounded from below if we assume that the variational fields $U$ and $V$ satisfy (\ref{mixjac}) and  impose lower bounds of the type 
\begin{equation}\label{upper}
-{\sf W}\geq c_1{\rm Id},\quad -B_{\overline\Sigma}^\nu\geq c_2{\rm Id}.
\end{equation} 
Under these assumptions, all the results in Section \ref{semigroup} hold for $\mathcal J_{-{\sf W}, -B_{\overline\Sigma}^\nu}$. In particular, the following vanishing result, corresponding to   
Corollary \ref{vanishres}, holds true.

\begin{theorem}\label{infrig}
	Under the conditions above, assume that $c_1>-\lambda_0$ and $c_2=0$ in (\ref{upper}). Then $X$  carries no $L^2$ Jacobi field satisfying (\ref{mixjac}). 
\end{theorem}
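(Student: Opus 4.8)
The plan is to read Theorem \ref{infrig} as a direct instance of the general vanishing result of Corollary \ref{vanishres}, once the Jacobi operator has been fitted into the framework of generalized Laplacians under mixed boundary conditions. The conceptual point to stress at the outset is that $\mathcal J$ is a generalized Laplacian that is \emph{not} of Dirac type, so Theorem \ref{main} (which rests on Assumption \ref{assdirac}) is unavailable here; however, Corollary \ref{vanishres} depends only on the semigroup domination estimate of Theorem \ref{domsem0}, which was established for an \emph{arbitrary} generalized Laplacian satisfying Assumption \ref{assumpwb}, with no appeal to any Dirac structure. First I would record the dictionary between the two settings: the bundle is $\mathcal E = TX^\perp$, the connection is the normal connection $\nabla^\perp$, the Weitzenb\"ock operator is $W = -{\sf W}$, and the boundary endomorphism is $S = -B_{\overline\Sigma}^\nu$.

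Next I would verify that the standing hypotheses of Section \ref{semigroup} are met. By Remark \ref{strict}, the Robin-type condition (\ref{mixjac}) is genuinely of mixed type in the sense of Definition \ref{mixedcond}: taking $\mathcal I = {\rm Id}$ (so that $\Pi_+ = {\rm Id}$ and $\Pi_- = 0$) together with $S = -B_{\overline\Sigma}^\nu$, the mixed condition $\Pi_+(\nabla_\nu^\perp - S)U = 0$, $\Pi_- U = 0$ reduces precisely to $(\nabla_\nu^\perp + B_{\overline\Sigma}^\nu)U = 0$, with the $\Pi_-$-condition vacuous. Moreover, the lower bounds (\ref{upper}) say exactly that $w \geq c_1$ and $\sigma \geq c_2$ in the notation of (\ref{defr}) and (\ref{defsigma}), so Assumption \ref{assumpwb} holds; together with Assumption \ref{assump} on $(X,g)$, this guarantees the self-adjoint extension $\mathcal J_{-{\sf W},-B_{\overline\Sigma}^\nu}$ and the full package of Section \ref{semigroup}, as already noted in the discussion preceding the statement.

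With these identifications secured the argument is essentially a one-line invocation. An $L^2$ Jacobi field satisfying (\ref{mixjac}) is, by definition, an element of $\mathcal H(TX^\perp) \cap L^2(X, TX^\perp)$. Since we assume $c_2 = 0$, Corollary \ref{vanishres} applies and furnishes the domination $\|K_{-{\sf W},-B_{\overline\Sigma}^\nu}(t;x,y)\| \leq C_1 e^{-\frac{1}{2}c_1 t} K_0(t;x,y)$; the remaining hypothesis $c_1 > -\lambda_0$ then forces $\mathcal H(TX^\perp) \cap L^2(X, TX^\perp) = \{0\}$, which is exactly the claim. I expect no genuine analytic obstacle here, as the heavy lifting — the Feynman--Kac representation of Theorem \ref{mainfk} and the ensuing heat-kernel domination — is already complete; the one point meriting care is simply confirming that the free boundary minimal setup truly falls under Definition \ref{mixedcond} and Assumption \ref{assumpwb}, in particular that $B_{\overline\Sigma}^\nu$ is a well-defined self-adjoint endomorphism of $TX^\perp|_\Sigma$ and that its lower bound is the one controlling $\sigma$.
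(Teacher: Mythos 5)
Your proposal is correct and follows exactly the route of the paper: the text preceding Theorem \ref{infrig} sets up $\mathcal J$ as a generalized Laplacian with $W=-{\sf W}$, $S=-B_{\overline\Sigma}^\nu$, $\mathcal I={\rm Id}$ (Remark \ref{strict}), checks Assumption \ref{assumpwb} via (\ref{upper}), and then deduces the theorem as an immediate instance of Corollary \ref{vanishres}, noting as you do that no Dirac structure is needed since the domination property holds for arbitrary generalized Laplacians. Nothing is missing.
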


\begin{example}\label{examp}
{\rm 	Let $\overline X$ be the exterior of an open geodesic ball in hyperbolic space $\mathbb H^{\overline n}$, so that $\overline \Sigma$ is the geodesic sphere bounding this ball. Now take any totally geodesic submanifold passing through the center of the ball and take $X$ to be the portion of this submanifold outside the ball. Then Theorem \ref{infrig} clearly applies to the free boundary minimal submanifold $X$.}
	\end{example}

\begin{remark}\label{misl}{\rm We note that the proof of the domination property in this setting is substantially simplified in the sense that we can get rid of the parameter $\epsilon>0$ appearing in Section \ref{semigroup}. In fact,  this kind of simplification will take place whenever, in the notation of Secion \ref{feyn-kac}, we take $\mathcal I={\rm Id}$ as in Remark \ref{strict}. To see this, take $\phi$ satisfying (\ref{solhom}) with $\Pi_+={\rm Id}$ and $\Pi_-=0$ and directly apply It\^o's formula to $M_{W,S,t}\phi^\dagger_{T-t}({\widetilde {\sf X}}_t)$ (no mention to $\epsilon$) as in the proof of Theorem \ref{feyn-kac-form}, where we assume that both $W$ and $S$ are uniformly bounded. We end up with 
		\begin{eqnarray*}
			d M_{W,S,t}\phi^\dagger_{T-t}(\widetilde {\sf X}_t^{x}) & = & 
			\left\langle M_{W, S,t}\mathcal L_{H}\phi^\dagger_{T-t}(\widetilde {\sf X}_t^{x}),db_t\right\rangle
			- M_{W,S,t} L^\dagger\phi^\dagger_{T-t}(\widetilde {\sf X}_t^{x})dt\\
			& & \quad + M_{W,S,t}\left(\mathcal L_{\nu^\dagger}-S^\dagger\right)\phi^\dagger_{T-t}(\widetilde {\sf X}_t^{x})d\lambda_t,
		\end{eqnarray*}
		and since 		
		the last two terms vanish, $M_{W,S,t}\phi^\dagger_{T-t}({\widetilde {\sf X}}_t)$ is found to be a martingale. 
		In this way we obtain a proof of the Feynman-Kac formula in Theorem \ref{feyn-kac-form}
		without having to appeal to the rather technical $\epsilon^{-1}$-perturbation in  Propositions \ref{estimgeo} and \ref{fkprep}. From this point on we may use the approximation scheme to remove the upper bounds on  
		$W$ and $S$
		just as we did in Section \ref{semigroup}.}
\end{remark}

\begin{remark}\label{kahler}
	{\rm 
Let $(\overline X,g)$ as above be a K\"ahler manifold and assume that the free boundary 	
minimal submanifold $X\subset \overline X$ of dimension $n/2$ is {\em Lagrangian} in the sense that $\Omega|_X=0$, where $\Omega$ is the underlying symplectic form. The map that to each normal vector $u\in TX_x^\perp$ associates the $1$-form $\alpha_u=u\righthalfcup \Omega\in T^*X$ defines a bundle isomorphism between $TX^\perp$ and $T^*X$, so that to each admissible variation vector field $U\in\Gamma(X,TX^\perp)$ there corresponds a $1$-form $\alpha_U\in\mathcal A^1(X)$. If we assume further that $\overline X$ is a Ricci flat, K\"ahler-Einstein manifold, then under this identification we have $\mathcal J=\Delta_1$,
the Hodge Laplacian acting on $1$-forms \cite[Proposition 4.1]{Oh}. In particular, by Subsection \ref{diffforms}, $\mathcal J$ is a generalized Dirac Laplacian satisfying Assumption \ref{assdirac}. Recalling that $\Delta_1=\nabla^*\nabla+{\rm Ric}$ and that Assumption \ref{assump} already implies that ${\rm Ric}$ is bounded from below, an application of Theorem \ref{main} gives the following result: {\em if $-B_{\overline \Sigma}^\nu$ is bounded from below then the heat conservation principle holds for $\mathcal J$.}}   
\end{remark}

\end{document}